\newtheorem{theorem}{Theorem} [section]
\newtheorem{lemma}[theorem]{Lemma}
\newtheorem{proposition}[theorem]{Proposition}
\newtheorem{remark}{Remark}[section]
\newtheorem{definition}{Definition}[section]
\newtheorem{corollary}[theorem]{Corollary}
\newtheorem*{ackno}{Acknowledgements}
\DeclareMathOperator*{\supp}{supp}
\newcommand{\noi}{\noindent}
\newcommand{\N}{\mathbb{N}}
\newcommand{\M}{\mathcal{M}}
\newcommand{\Z}{\mathbb{Z}}
\newcommand{\R}{\mathbb{R}}
\newcommand{\T}{\mathbb{T}}
\newcommand{\C}{\mathbb{C}}
\def\l{\lambda}
\def\e{\varepsilon}
\newcommand{\dl}{\delta}
\newcommand{\eps}{\varepsilon}
\newcommand{\E}{\mathbb E}
\newcommand{\ld}{\lambda}
\newcommand{\pa}{\partial}
\newcommand{\pP}{\mathbf{P}}
\newcommand{\ft}{\widehat}
\newcommand{\wt}{\widetilde}
\newcommand{\dt}{\partial_t}
\newcommand{\jb}[1]
{\langle #1 \rangle}
\numberwithin{equation}{section}
\numberwithin{theorem}{section}
\let\Re=\undefined\DeclareMathOperator*{\Re}{Re}
\let\Im=\undefined\DeclareMathOperator*{\Im}{Im}
\begin{document}
\baselineskip = 14pt

\title[Almost sure global well-posedness for energy-critical NLW on $\R^d$, $d=4, 5$]{Almost sure global well-posedness for the energy-critical defocusing nonlinear wave equation on $\R^d$, $d=4$ and $5$}

\author[O.~Pocovnicu]
{Oana Pocovnicu}

\address{
Oana Pocovnicu\\
School of Mathematics\\
Institute for Advanced Study\\
Einstein Drive, Princeton\\ NJ 08540\\ USA}

\curraddr{
Department of Mathematics\\
Princeton University\\
Washington Road,
Princeton\\ NJ 08544\\ USA}

\email{opocovnicu@math.princeton.edu}

\subjclass[2010]{35L71}

\keywords{Nonlinear wave equations, almost sure well-posedness, probabilistic continuous dependence, Wiener decomposition}

\thanks{This material is based upon work supported by the National Science Foundation under agreement No. DMS-1128155. Any opinions, findings, and conclusions or recommendations expressed in this material are those of the author and do not necessarily reflect the views of the National Science Foundation.}

\begin{abstract}
We consider the energy-critical defocusing nonlinear wave equation (NLW)
on $\R^d$, $d=4$ and $5$. 
We prove almost sure global existence and uniqueness for NLW with
rough random initial data in $H^s(\R^d)\times H^{s-1}(\R^d)$, with $0< s\leq 1$
if $d=4$, and $0\leq s\leq 1$ if $d=5$.
The randomization we consider is 
naturally associated with the Wiener decomposition
and with modulation spaces. The proof is based on  a probabilistic 
perturbation theory.
Under some additional assumptions, 
for $d=4$, we also prove the probabilistic continuous dependence of the flow 
with respect to the initial data
(in the sense proposed by Burq and Tzvetkov in \cite{BT3}).
\end{abstract}

\maketitle

\section{Introduction}

\subsection{Energy-critical nonlinear wave equations}

We consider the Cauchy problem for the energy-critical defocusing nonlinear wave equation (NLW) on $\R^d$, $d=4$ or $5$:
\begin{equation}\label{NLW}
\begin{cases}
\pa_{t}^2 u-\Delta u+F(u)=0 
\\
(u,   \pa_t u)\big|_{t = 0} = (\phi_0, \phi_1), 
\end{cases}
\quad \quad (t,x)\in\R\times\R^d, 
\end{equation} 

\noi
where 
$F(u)=|u|^{\frac{4}{d-2}}u$ and
$u$ is a real-valued function on $\R\times\R^d$.
Our main focus in this paper is to 
study the global-in-time behavior of solutions with {\it random} and {\it rough} initial data.

The flow of the equation  \eqref{NLW}  formally conserves  the  energy $E(u)$ defined by 
\begin{equation}
E(u) = E(u, \pa_t u) :=\int_{\R^d} \frac 12(\pa_t u)^2+\frac 12|\nabla u|^2+\frac{d-2}{2d}
|u|^{\frac{2d}{d-2}} dx.
\label{Zenergy}
\end{equation} 

\noi
We define 
the energy space $\mathcal{E}(\R^d)$ associated to \eqref{NLW} 
to be the space of pairs $(f, g)$ of real-valued functions of finite energy
\[\mathcal{E}(\R^d) : = \left\{(f,g): E(f,g):=\int_{\R^d} \frac 12g^2+ \frac 12|\nabla f|^2+ 
\frac{d-2}{2d}|f|^{\frac{2d}{d-2}}dx<\infty\right\}.\]

\noi
Then, in view of the Sobolev embedding
$\dot{H}^1(\R^d) \subset L^{\frac{2d}{d-2}}(\R^d)$, we immediately see that 
\[\mathcal{E}(\R^d)=\dot{H}^1(\R^d)\times L^2(\R^d).\]

It is well known that the NLW equation \eqref{NLW} on $\R^d$ enjoys several symmetries.
Of particular importance is the following 
 scaling invariance:
\begin{align}
u(t,x)\mapsto u_{\lambda}(t,x):=\lambda^{\frac{d-2}{2}} u(\lambda t, \lambda x).
\label{Zscaling}
\end{align}

\noi
Namely, if $u$ is a solution of equation \eqref{NLW},
then $u_{\lambda}$ is also a solution of \eqref{NLW} with rescaled initial data.
Notice that the following equality hods:
\begin{equation}
\|(u_\lambda (0), \pa_t u_\lambda(0))
\|_{\dot{H}^{s}(\R^d)\times \dot{H}^{s-1}(\R^d)}=
\ld^{s - 1}\|(u(0),\pa_t u(0))\|_{\dot{H}^{s}(\R^d)\times\dot{H}^{s-1}(\R^d)}.
\label{Zscaling2}
\end{equation}
One then defines the so-called scaling critical Sobolev index $s_c : = 1$
to be the index $s$ for which
the homogeneous $\dot H^{s}(\R^d)\times \dot H^{s-1}(\R^d)$-norm of $(u(0), \pa_t u(0))$ is invariant under  
the scaling \eqref{Zscaling}.
We notice that the critical space $\dot{H}^1(\R^d)\times L^2(\R^d)$ under the scaling
coincides with the energy space $\mathcal{E}(\R^d)$.
Moreover, 
the energy $E(u)$ defined in \eqref{Zenergy}  is also invariant under the scaling.
Therefore, we refer to the NLW \eqref{NLW} on $\R^d$
as {\it energy-critical}. 

Heuristically speaking, 
for an energy-critical NLW,
there is a delicate balance between the 
linear and nonlinear parts of the equation, 
and so this has made the analysis of such equations rather intricate.
Nonetheless, 
after an intensive effort which materialized in many articles on the subject, 
it is now known that the energy-critical defocusing nonlinear wave equations on $\R^d$, $d\geq 3$, 
are globally well-posed in the energy space 
and that all solutions in the energy space scatter.
The small energy data theory goes back to Strauss \cite{Strauss}, Rauch \cite{Rauch}, and Pecher \cite{Pecher}. 
The global regularity 
(referring to the fact that smooth initial data lead to smooth global solutions)
was proved in the works of Struwe \cite{Struwe}, Grillakis \cite{Grillakis90, Grillakis92}, and Shatah and Struwe \cite{Shatah_Struwe93}.
Regarding global well-posedness in the energy space, scattering, and global space-time bounds, we cite Shatah and Struwe \cite{Shatah_Struwe}, Kapitanski \cite{Kapitanski},
Ginibre, Soffer, and Velo \cite{Ginibre}, Bahouri and Shatah \cite{Bahouri_Shatah},
Bahouri and G\'erard \cite{Bahouri_Gerard}, Nakanishi \cite{Nakanishi1999, Nakanishi_scattering}, and Tao \cite{Tao}.

On the other hand, 
there are ill-posedness results for the energy-critical NLW 
below the scaling critical regularity $s_c = 1$.
When 
$d=3$ and $4$, Christ, Colliander, and Tao \cite{Christ_Colliander_Tao_main} 
proved that
the solution map of the energy-critical NLW fails to be continuous at zero in 
the $H^s(\R^d)\times H^{s-1}(\R^d)$-topology, for $0<s<1$.
See also \cite{Christ_Colliander_Tao, Alazard_Carles, Lebeau1, Lebeau, BK, Ibrahim} for 
other ill-posedness results for  nonlinear wave and Schr\"odinger equations.

In spite of the above deterministic ill-posedness results,
in this paper we consider the Cauchy problem \eqref{NLW} with
general initial data $(\phi_0,\phi_1)$ that do not belong to the energy space,
in a probabilistic manner.
More precisely, for $d=4$ or $5$, given 
\[(u_0,u_1)
\in H^s(\R^d)\times H^{s-1}(\R^d)\setminus
H^1(\R^d)\times L^2(\R^d)\]

\noi 
for some $s \in (0, 1)$, 
we consider its randomization 
$(u_0^\omega,u_1^\omega)$ 
defined in \eqref{R1} below.
This randomization $(u_0^\omega,u_1^\omega)$
 has the same regularity as $(u_0, u_1)$  and is not smoother 
in terms of 
differentiability, almost surely.
See Lemma \ref{lemma:Hs}.
Our first task is to construct local-in-time solutions 
of \eqref{NLW} with initial data $(\phi_0,\phi_1)=(u_0^\omega, u_1^\omega)$
in a probabilistic manner.
As in the work of Burq and Tzvetkov \cite{BTI}, 
the key point is the improved integrability properties
of the randomization
$(u_0^\omega,u_1^\omega)$.
This allows us to obtain improved  probabilistic local-in-time Strichartz estimates (Proposition \ref{proba_S}).
Then, we prove  almost sure local existence (with uniqueness)
by a simple fixed point argument in Strichartz spaces.
See Theorem \ref{ASLWP} below.
This almost sure local existence result is accompanied
by a probabilistic small data global result (Theorem \ref{scattering}).
In \cite{BOP1, BOP2}, the author with B\'enyi and  Oh
considered the same problem for
the cubic nonlinear Schr\"odinger equation (NLS) on $\R^d$, $d\geq 3$.

Once we construct local-in-time solutions almost surely, 
our next task is to extend them globally in time. 
Indeed, this probabilistic global-in-time argument is the main goal and novelty of this paper.
Informally speaking, 
there have been two kinds of
globalization arguments in the probabilistic setting:
(i) invariant measure argument
and (ii) certain probabilistic adaptations
of deterministic globalization arguments.

In \cite{Bourgain94}, 
Bourgain 
proved global existence
of NLS on $\T$ 
almost surely
with respect to the associated Gibbs measure.
The basic idea behind his argument
is to use (formal) {\it invariance} of this Gibbs measure
in the place of conservation laws.
He made this rigorous by exploiting the  invariance
of the finite dimensional Gibbs measures
for the associated finite dimensional approximations
of NLS.
This approach 
has been used in  subsequent works; see for example 
\cite{Bourgain96, Bourgain97, Tzv2006, BTIMRN, BTII, Oh09, DNLS, Suzzoni2013, BTT, Deng, R, BB3}.
In our context, 
however, there is no apparent (formally) 
invariant measure
and thus this approach is not appropriate.

Recently, 
there have been several 
probabilistic global-in-time arguments
in the absence of an invariant measure.
In \cite{Colliand_Oh}, 
Colliander and Oh 
introduced a 
{\it probabilistic high-low decomposition method} 
to prove almost sure global well-posedness 
of cubic NLS on $\T$ below $L^2(\T)$.
This is an adaptation
of Bourgain's high-low decomposition method \cite{Bourgain98}
in the probabilistic setting.
This approach was also used 
by L\"uhrmann and Mendelson
in the context of energy-subcritical NLW on $\R^3$.

In \cite{BT3}, Burq and Tzvetkov considered
(energy-subcritical) cubic NLW on $\T^3$ and 
proved probabilistic global well-posedness
for rough random data
in $H^s(\T^3)\times H^{s-1}(\T^3)$, $0\leq s<1$.
The main ingredient is to establish
a {\it probabilistic a priori bound on the energy}.\footnote{
The argument for obtaining almost sure global existence
when $s = 0$ is more involved.}
This is in some sense a probabilistic analogue
of the fact that a conservation law yields
(subcritical) global well-posedness in the
deterministic theory.

Probabilistic a priori bounds have also been combined
with a {\it probabilistic compactness method}.
Using this strategy,   
Burq, Thomann, and Tzvetkov obtained in \cite{BTT2}
almost sure existence 
of global solutions of 
the (energy-critical and energy-supercritical) 
cubic wave equation on $\T^d$, $d\geq 4$,
with rough random data.
As in the deterministic setting, 
the compactness method 
does not yield the uniqueness of such solutions. 
Note that this work was inspired by an earlier work 
of Nahmod, Pavlovi\'c, and Staffilani \cite{NPS}.
They considered  
the Navier-Stokes equations on $\T^2$ and $\T^3$
and proved almost sure existence of global weak solutions 
with rough random data. 
These weak solutions were shown 
to be unique on
 $\T^2$.

In the following, 
we use
a {\it probabilistic perturbation theory}
to prove 
almost sure
global well-posedness\footnote{Here, almost sure global well-posedness 
means almost sure global existence, uniqueness, and
a weak form of continuous dependence (see \cite{Bourgain96} and \cite{Colliand_Oh}).
This is not to be confused with probabilistic Hadamard global well-posedness that we describe below.}
for 
the energy-critical NLW \eqref{NLW} on $\R^d$
with rough random data in $H^s(\R^d)\times H^{s-1}(\R^d)$, 
where $0 < s\leq  1$ if $d=4$, and $0\leq s\leq 1$ if $d=5$.  
See Theorem \ref{main} below. 
This is the first instance when a perturbation theory 
is successfully applied  in the probabilistic setting 
to yield almost sure global existence.
See also \cite{BOP2}.
Perturbation theory has played an important
 role in the study of deterministic energy-critical NLW and NLS.
 Moreover, 
perturbation theory has been previously used 
to prove global well-posedness
for various other equations
in the deterministic setting.
See for example \cite{CKSTT, KM, TVZ, KOPV}.  
In \cite{TVZ}, Tao, Vi\c{s}an, and Zhang used
perturbation theory to prove (among other results) global well-posedness
of NLS with a combined power nonlinearity,
 one of
the powers being energy-critical, while the other is energy-subcritical.
Such an equation can be viewed as a perturbation of the energy-critical NLS,
with the smallness of the error coming
from the subcritical nature of the other power nonlinearity.
Now, let us turn our attention to equation
\eqref{NLW} with random initial data 
$(\phi_0,\phi_1)=(u_0^{\omega}, u_1^{\omega})$.
By denoting the linear\footnote{See \eqref{Zlinear} for the precise definition of the linear propagator $S(t)$.}
 and nonlinear parts of the solution 
$u^\omega$
of \eqref{NLW}
by $z^{\omega}(t)=S(t)(u_0^{\omega}, u_1^{\omega})$
and $v^\omega(t):=u^\omega(t)-z^\omega(t)$,
the equation \eqref{NLW} reduces to 
\begin{equation}\label{v}
\begin{cases}
\pa_{t}^2 v^\omega-\Delta v^\omega+F(v^\omega+z^\omega)=0\\
(v^\omega,\pa_t v^\omega)\big|_{t=0}=(0,0).
\end{cases}
\end{equation} 

\noi
Namely, the nonlinear part $v^\omega$ satisfies the energy-critical NLW
with a perturbation.
The crucial point in our approach is the fact that
the  error $F(v^\omega+z^\omega)-F(v^\omega)$ 
can be made small (on short time intervals)
thanks to the improved local-in-time Strichartz estimates
satisfied by 
 the {\it random} linear  part $z^\omega$.

Another essential  ingredient 
for an actual implementation of 
the probabilistic perturbation theory
is 
a probabilistic a priori bound on the energy of the nonlinear part $v^\omega$ of the solution $u^\omega$
on each finite time interval.  See  Proposition \ref{prop:energy}.
The 
probabilistic energy bound that we use here 
is the analogue of that obtained by Burq and Tzvetkov in  
\cite{BT3} for the cubic NLW on $\T^3$.
In \cite{BOP2}, the author with B\'enyi and Oh
applied 
a similar probabilistic perturbation theory
in the context of the defocusing
cubic NLS on $\R^d$, $d\geq 3$.
The global-in-time result in \cite{BOP2}, however, 
is conditional, 
even for the energy-critical cubic NLS on $\R^4$.
On $\R^4$, this failure is due to the fact 
that we do not have a probabilistic 
 a priori energy bound 
on the nonlinear part of a solution.
The key advantage of NLW, in comparison to NLS, 
is the presence of the term $\int \frac 12 (\pa_t v)^2dx$
in the energy.
We also emphasize the importance of the specific power nonlinearity 
in obtaining the probabilistic energy bound. 
It would be interesting to extend 
our almost sure global well-posedness result
to the energy-critical quintic NLW on $\R^3$
due to its physical relevance.
In view of Remark \ref{HypA} below,
such 
almost sure global existence would 
follow 
once we establish an analogous
probabilistic energy bound on $\R^3$.

The almost sure global well-posedness described in Theorem \ref{main}
refers to almost sure global existence, uniqueness,
and a weak form of continuous dependence
(as in \cite{Bourgain96} and \cite{Colliand_Oh}. See Remark \ref{REM:Zconti} (ii) below.)
Recently, 
Burq and Tzvetkov introduced in \cite{BT3}
the stronger notion of {\it probabilistic Hadamard global well-posedness}.
This  refers to almost sure global existence and uniqueness,
accompanied by {\it probabilistic continuity} of the flow with respect to the random initial data.
In Theorem \ref{proba_cont}, we 
prove, for $d=4$, that the flow of equation \eqref{NLW} 
is continuous in probability, 
under some extra assumptions.
This 
allows us to establish probabilistic Hadamard global well-posedness 
of the energy-critical defocusing cubic NLW \eqref{NLW} on $\R^4$ in $H^s(\R^4)\times H^{s-1}(\R^4)$, $0<s\leq 1$, in the sense of \cite{BT3}.


\medskip

\subsection{Randomization adapted to the Wiener decomposition and modulation spaces}

Starting with the works of Bourgain \cite{Bourgain96} and Burq and Tzvetkov \cite{BTI}, 
there have been 
many results on 
probabilistic constructions of solutions
of evolution equations 
with random initial data.
Many of the probabilistic results in the literature are on compact manifolds $M$,
where there is a countable basis $\{e_n\}_{n\in\N}$ of $L^2(M)$ consisting of
eigenfunctions of the Laplace-Beltrami operator.
This gives a natural way to introduce a randomization.
Given $u_0(x)=\sum_{n=1}^\infty c_n e_n(x)\in H^s(M)$,
one defines its randomization by
\begin{align}
u_0^\omega (x)=\sum_{n=1}^\infty g_n(\omega) c_n e_n(x),
\label{Ziv}
\end{align}
where $\{g_n\}_{n\in\N}$
is a sequence of independent random variables.
On $\R^d$, however,
 there is no countable
basis of $L^2(\R^d)$ consisting of eigenfunctions of the Laplacian.
In order to overcome this difficulty, several approaches have been introduced. 
For example, randomizations analogous to \eqref{Ziv} have been considered
with respect to a countable basis of $L^2(\R^d)$
consisting of eigenfunctions of the Laplacian with a confining potential, 
such as the harmonic oscillator $-\Delta + |x|^2$. 
Some results for the NLS with a harmonic potential were then transferred to the usual NLS on $\R^d$
via the lens transform. See \cite{Thomann, BTT, Deng, Poiret}.
Another approach consisted in working on the unit sphere $\mathbb{S}^3$,
and then transferring results to $\R^3$ via the Penrose transform. See 
\cite{Suzzoni2011, Suzzoni2013, Suzzoni2012}.

In this paper,  
we use a simple randomization for functions on $\R^d$,
naturally associated to the Wiener decomposition and modulation spaces.
This 
seems to be quite canonical
from the point of view of 
the time-frequency analysis \cite{Gr}.
Such a randomization was previously used in \cite{LM, BOP1, BOP2}. See also \cite{ZF}.

Given  $d\geq 1$, 
let $Q_n$ be  the unit cube $Q_n:=n+\left[-\frac 12, \frac 12 \right)^d$ centered at $n\in \Z^d$.
Then, 
 consider the uniform partition of the frequency space 
 $\R^d = \bigcup_{n \in \Z^d} Q_n$, 
 commonly referred to as 
the~{\it Wiener decomposition} \cite{W}.
Noting that $\sum_{n \in \Z^d} \chi_{Q_n}(\xi) \equiv 1$, we have 
\[ u = \sum_{n \in \Z^d} \chi_{Q_n} (D) u 
=  \sum_{n \in \Z^d} \chi_{Q_0} (D - n) u, \]

\noi
where $ \chi_{Q_n} (D) u  : = \mathcal{F}^{-1}\big[ \chi_{Q_n} \ft u\big]$.
In the following, we consider a smoothed version of this decomposition.

Let $\tilde{\psi}\in\mathcal{S}(\R^d)$
be a real-valued function such that
$\tilde\psi (\xi)\geq 0$,
$\tilde{\psi}(-\xi)=\tilde{\psi}(\xi)$ for all $\xi\in\R^d$ and
\begin{align*}
\tilde{\psi}(\xi)=
\begin{cases}
1, \textup{ if } \xi \in Q_0, \\
0, \textup{ if } \xi\notin [-1,1]^d.
\end{cases}
\end{align*}

\noindent
Set
\[\psi(\xi):=\frac{\tilde{\psi}(\xi)}{\sum_{n\in\Z^d}\tilde{\psi}(\xi-n)}.\]
Then, $\psi \in \mathcal{S}(\R^d)$ is a real-valued function, 
$0\leq \psi(\xi)\leq 1$ for all $\xi\in\R^d$, $\supp \psi \subset [-1,1]^d$, 
\begin{equation}
\psi(-\xi)=\psi(\xi)
\label{Zsym}
\end{equation}

\noi
 for all $\xi\in\R^d$, and $\sum_{n \in \Z^d} \psi(\xi -n) \equiv 1$.
Now, we define the Fourier multiplier
\begin{equation}\label{psi_multiplier}
\psi(D-n)u(x)=\int_{\R^d}\psi(\xi-n)\widehat u(\xi)e^{2\pi ix\cdot \xi}\,d\xi.
\end{equation}
Then, any function $u$ on $\R^d$ can be written as
\begin{equation}
 u = \sum_{n \in \Z^d} \psi(D-n) u.
\label{Ziv2}
 \end{equation}

\noi
From the symmetry condition \eqref{Zsym} imposed on $\psi$, we have 
\begin{equation}\label{psi}
\overline{\psi (D+n)u}=\psi (D-n)u \quad \text{ for real-valued } u,
\end{equation}
since $\overline{\ft u(\xi)}=\ft{u}(-\xi)$.
In particular, $\psi (D)u$
is real-valued for real-valued $u$.

\begin{remark}\rm

The modulation spaces  introduced by Feichtinger \cite{Fei} 
are naturally associated to the uniform decomposition \eqref{Ziv2}.
Indeed,  the modulation space $M^{p, q}_s$, 
 $0<p,q\leq \infty$, $s\in\mathbb R$, 
 consists of all tempered distributions $u\in\mathcal S'(\R^d)$ for which the (quasi) norm
\begin{equation}
\|u\|_{M_s^{p, q}(\R^d)} := \big\| \jb{n}^s \|\psi(D-n) u
\|_{L_x^p(\R^d)} \big\|_{\ell^q_n(\mathbb{Z}^d)}
\label{Zmod}
\end{equation}

\noi
is finite. 
Compare \eqref{Zmod} with the definition
of the Besov spaces which are naturally associated 
to the decomposition of the frequency space into dyadic annuli.
\end{remark}

In the following, we introduce the randomization 
adapted to the uniform decomposition \eqref{Ziv2}.
Let $\{g_{n,j}\}_{n \in \Z^d,  j=0,1}$ be a sequence of mean zero complex-valued random variables
on a probability space $(\Omega, \mathcal{F}, P)$
such that $g_{-n,j}=\overline{g_{n,j}}$
for all $n\in\Z^d$, $j=0,1$.
Assume also that $\left\{g_{0,j}, \text{Re}\, g_{n,j}, \text{Im}\, g_{n,j}\right\}_{n\in\mathcal I, j=0,1}$ 
are independent
and endowed
with probability distributions\footnote{The probability distribution $\mu$
of a real
random variable $g$ is an induced probability measure defined by
$\mu(A):=P(\{\omega \in \Omega: g(\omega)\in A\})$
for all measurable sets $A\subset \R$.}
 $\mu_{0,j}$, $\mu_{n,j}^{(1)}$,  and $\mu_{n,j}^{(2)}$.
Here,  the index set $\mathcal I$ is given by
\begin{equation}\label{index}
\mathcal I=\bigcup_{k=0}^{d-1}\Z^k\times \Z_+^\ast \times\{0\}^{d-k-1}
\end{equation}
and is such that $\Z^d=\mathcal I\cup (-\mathcal I)\cup \{0\}$.

For functions $u_0$, $u_1$ on $\R^d$,
we  define the \emph{Wiener randomization} $(u_0^\omega, u_1^\omega)$
of $(u_0,u_1)$ by
\begin{equation}
(u_0^\omega, u_1^\omega) : = 
\bigg(\sum_{n \in \Z^d} g_{n,0} (\omega) \psi(D-n) u_0,
\sum_{n \in \Z^d} g_{n,1} (\omega) \psi(D-n) u_1\bigg).
\label{R1}
\end{equation}

\noi
Note that, if $u_0$ and $u_1$ are real-valued,
then their randomizations $u_0^\omega$ and $u_1^\omega$
are also real-valued.
More precisely, by \eqref{psi}, we have
\begin{align}\label{real_randomization}
u_j^\omega&=g_{0,j} \psi(D)u_j+\sum_{n\in \mathcal I}g_{n,j}\psi (D-n)u_j + g_{-n,j}\psi (D+n)u_j\notag\\
&=g_{0,j} \psi(D)u_j+2{\rm Re} \sum_{n\in \mathcal I}g_{n,j} \psi(D-n)u_j, \quad j=0,1.
\end{align}

In the sequel, we make the following assumption: there exists $c>0$ such that
\begin{equation}
\int_{\R}e^{\gamma x}d\mu_{0,j}\leq e^{c\gamma^2}
\text{ and }
\int_{\R} e^{\gamma x } d \mu_{n,j}^{(k)}(x) \leq e^{c\gamma^2}
\label{cond}
\end{equation}
	
\noindent
for all $\gamma \in \R$, $n \in \Z^d$, $j=0,1$, and $k = 1, 2$.
Note that \eqref{cond} is satisfied by
standard complex-valued Gaussian random variables,
standard Bernoulli random variables,
and any random variables with compactly supported distributions.

It is easy to see that, if $(u_0,u_1) \in H^s(\R^d)\times H^{s-1}(\R^d)$
for some $s \in \R$,
then  the randomized function $(u_0^\omega, u_1^\omega)$ is
almost surely  in $H^s(\R^d)\times H^{s-1}(\R^d)$. 
See Lemma \ref{lemma:Hs} below.
One can also show that there is no smoothing upon randomization
in terms of differentiability 
(see, for example, Lemma B.1 in \cite{BTI}).
Instead, the main point of this randomization is
its improved integrability:
 if $u_j \in L^2(\R^d)$, $j=0,1$,
then  the randomized function $u_j^\omega$ is
almost surely  in $L^p(\R^d)$ for any finite $p \geq 2$.
Such results for random Fourier series
are known as  Paley-Zygmund's theorem \cite{PZ}.
See also \cite{Kahane, AT, BOP1}.


\medskip

\subsection{Main results}

In this subsection,  we state the main results of the paper.
In the following, 
we denote the linear propagator
of the linear wave equation
by $S(t)$.
Namely,  
the solution of the linear wave equation 
with initial data $(u, \pa_t u)\big|_{t = 0}=(u_0, u_1)$
is denoted by
\begin{equation}\label{Zlinear}
S(t)\left(u_0, u_1\right):=\cos(t|\nabla|)u_0+\frac{\sin (t|\nabla|)}{|\nabla|}u_1.
\end{equation}

We first state an almost sure local well-posedness result.
\begin{theorem}[Almost sure local well-posedness]
\label{ASLWP}
Let $d=4$ or $5$ and $0\leq s\leq 1$. 
Given  $(u_0, u_1) \in H^s(\R^d)\times H^{s-1}(\R^d)$,
let $(u_0^\omega, u_1^\omega)$
be the randomization defined in \eqref{R1},
satisfying \eqref{cond}. 

\medskip

\noindent
{\rm (a)} The energy-critical defocusing NLW \eqref{NLW}
on $\R^d$ admits almost surely a unique local solution
with initial data $(u_0^\omega, u_1^\omega)$ at $t=0$.
More precisely, there exist $C,c,\gamma>0$ such that for each $T$ sufficiently small,
there exists a set $\Omega_T\subset \Omega$ with the following properties:

\medskip
\noindent
{\rm(i)} $P(\Omega_T^c)<C\exp(-\frac{c}{T^\gamma})$.

\medskip
\noindent
{\rm(ii)} For each $\omega\in \Omega_T$, there exists a unique solution $u^\omega$ of equation \eqref{NLW}
with $(u^\omega, \pa_t u^\omega)\big|_{t = 0}
=(u_0^\omega, u_1^\omega)$ in the class:
\begin{align*}
\big(S(t)(u_0^\omega, u_1^\omega), \pa_tS(t)(u_0^\omega, u_1^\omega)\big)&+C\left([-T,T]; H^1(\R^d)\times L^2(\R^d)\right)\\
&\subset C\big([-T,T]; H^s(\R^d)\times H^{s-1}(\R^d)\big).
\end{align*}

\noi
Here,  uniqueness  holds
in a ball centered at 
$S(\cdot)(u_0^\omega, u_1^\omega)$ 
in 
$C([-T,T]; \dot{H}^1(\R^d))\cap 
L^{\frac{d+2}{d-2}}\Big([-T,T]; L^{\frac{2(d+2)}{d-2}}(\R^d)\Big)$.

\medskip

\noindent
{\rm(b)} Let $(w_0,w_1)\in H^1(\R^d)\times L^2(\R^d)$.
Then, there exists positive $T' = T' \big(T,w_0,w_1\big)<T$ such that for all $\omega\in\Omega_{T}$,
the energy-critical defocusing NLW on $\R^d$ admits a unique solution
with initial data
\[(u,\pa_t u)\big|_{t=t_\ast}=\big(S(t_\ast)(u_0^\omega, u_1^\omega), \pa_t S(t_\ast)(u_0^\omega, u_1^\omega)\big)+(w_0,w_1),\]
in the class
\[(u^\omega,\pa_t u^\omega) \in C\big([t_\ast-T',t_\ast+T']; H^s(\R^d)\times H^{s-1}(\R^d)\big),\]

\noindent
as long as $[t_\ast-T',t_\ast + T']\subset [-T,T]$.
Here,  uniqueness holds in a ball centered at 
\[S(\cdot-t_\ast)\Big(\big(S(t_\ast)(u_0^\omega, u_1^\omega), \pa_t S(t_\ast)(u_0^\omega, u_1^\omega)\big)+ (w_0,w_1)\Big)\]

\noindent
in $C\big([t_\ast-T',t_\ast+T'];\dot{H}^1(\R^d) \big)\cap L^{\frac{d+2}{d-2}}
\Big([t_\ast-T',t_\ast+T'];L^{\frac{2(d+2)}{d-2}}(\R^d) \Big)$.
\end{theorem}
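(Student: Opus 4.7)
The plan is to write $u^\omega = z^\omega + v^\omega$ with $z^\omega = S(t)(u_0^\omega,u_1^\omega)$, and to construct $v^\omega$ in the energy space by a Picard iteration. In Part~(a), $v^\omega$ satisfies the perturbed equation \eqref{v} with zero data, so
\begin{equation*}
v^\omega(t)=-\int_0^t\frac{\sin((t-t')|\nabla|)}{|\nabla|}F(v^\omega+z^\omega)(t')\,dt'.
\end{equation*}
The sole input from the randomization is that, although the initial data lie only in $H^s$ with $s<1$, the Wiener-randomized linear evolution $z^\omega$ still occupies the same Strichartz space a deterministic $\dot H^1$ solution would; this is the content of the probabilistic local-in-time Strichartz estimate of Proposition~\ref{proba_S}, which I would invoke as a black box.

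I would run the contraction in $X(T)=C([-T,T];\dot H^1(\R^d))\cap L^q([-T,T];L^r(\R^d))$ with the energy-critical pair $(q,r)=\big(\tfrac{d+2}{d-2},\tfrac{2(d+2)}{d-2}\big)$, i.e.\ $(3,6)$ if $d=4$ and $(7/3,14/3)$ if $d=5$. This pair is tailored to the nonlinearity: with $p:=\frac{d+2}{d-2}$ one has $p=q$ and $pr=2r$, hence
\begin{equation*}
\|F(u)\|_{L^1_T L^2_x}=\|u\|_{L^q_T L^r_x}^{p}.
\end{equation*}
Proposition~\ref{proba_S} then furnishes, for any fixed $R>0$, a set $\Omega_T\subset\Omega$ on which $\|z^\omega\|_{L^q_T L^r_x}\le R$ with $P(\Omega_T^c)<C\exp(-c/T^\gamma)$. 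For $\omega\in\Omega_T$, inhomogeneous Strichartz together with the pointwise bounds $|F(u)|\lesssim |u|^p$ and $|F(a)-F(b)|\lesssim |a-b|(|a|^{p-1}+|b|^{p-1})$ give
\begin{align*}
\|\Phi(v)\|_{X(T)}&\lesssim\bigl(\|v\|_{X(T)}+R\bigr)^p,\\
\|\Phi(v_1)-\Phi(v_2)\|_{X(T)}&\lesssim\bigl(\|v_1\|_{X(T)}+\|v_2\|_{X(T)}+R\bigr)^{p-1}\|v_1-v_2\|_{X(T)}.
\end{align*}
Choosing $R$ sufficiently small makes $\Phi$ a strict contraction on the closed $X(T)$-ball of radius $CR$, producing a unique fixed point $v^\omega$; this, together with $z^\omega\in C([-T,T];H^s\times H^{s-1})$, delivers the claimed $u^\omega$, and the uniqueness ball centered at $S(\cdot)(u_0^\omega,u_1^\omega)$ corresponds exactly to the ball of $v^\omega$ in $X(T)$.

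Part~(b) applies the same decomposition $u=z^\omega+v$ to the shifted Cauchy problem at $t=t_\ast$; now $v$ carries the perturbation data $(w_0,w_1)\in H^1(\R^d)\times L^2(\R^d)$, so
\begin{equation*}
v(t)=S(t-t_\ast)(w_0,w_1)-\int_{t_\ast}^t\frac{\sin((t-t')|\nabla|)}{|\nabla|}F(v+z^\omega)(t')\,dt'.
\end{equation*}
The deterministic linear piece $S(\cdot-t_\ast)(w_0,w_1)$ has its $X(T')$-norm controlled by $\|(w_0,w_1)\|_{\dot H^1\times L^2}$, and by dominated convergence at the Strichartz level its $L^q_{T'}L^r_x$-norm tends to zero as $T'\to 0$. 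Choosing $T'=T'(T,w_0,w_1)\le T$ small enough makes this norm $\le R$, and the same contraction closes on $[t_\ast-T',t_\ast+T']$. Crucially $\Omega_T$ is unchanged from Part~(a), since $z^\omega$ is still the linear evolution starting at $t=0$.

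The main technical obstacle is packaged into Proposition~\ref{proba_S} itself: the sub-Gaussian bound \eqref{cond} and the unit-frequency localization coming from the Wiener decomposition must be combined to produce the probabilistic $L^q_T L^r_x$ gain for $z^\omega$ beyond what $H^s$-regularity alone permits, together with the exponential tail $\exp(-c/T^\gamma)$. Once that is in hand, the remainder of the argument is essentially classical small-data Strichartz theory. A minor subtlety, relevant only for $d=5$, is the low regularity of $F$ (there $p=7/3$ so $F$ is only $C^1$), but this is harmless since all estimates above are purely pointwise and Hölder — the wave propagator absorbs the lost derivative, so no chain rule or fractional-derivative bound on $F$ is ever invoked.
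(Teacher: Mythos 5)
Your proposal is correct and follows essentially the same route as the paper: the decomposition $u^\omega=z^\omega+v^\omega$, the probabilistic smallness of $\|z^\omega\|_{X}$ from Proposition \ref{proba_S} on a set $\Omega_T$ with $P(\Omega_T^c)\le C\exp(-c/T^\gamma)$, a contraction in $C_t\dot H^1\cap L^{\frac{d+2}{d-2}}_tL^{\frac{2(d+2)}{d-2}}_x$ using $\|F(u)\|_{L^1_TL^2_x}=\|u\|_{L^q_TL^r_x}^{(d+2)/(d-2)}$, and for (b) the absorption of $S(t-t_\ast)(w_0,w_1)$ by shrinking $T'$ while keeping $\Omega_T$ fixed (the paper puts this piece into the linear part rather than into the data of $v$, but that is the same fixed point). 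The one step you omit is needed for the class stated in the theorem, namely that $v^\omega\in C([-T,T];H^1)$ with $\pa_t v^\omega\in C([-T,T];L^2)$ and not merely $v^\omega\in C_t\dot H^1$: the paper closes this by the elementary bound $\big\|\tfrac{\sin((t-t')|\nabla|)}{|\nabla|}g\big\|_{L^2}\le |t-t'|\,\|g\|_{L^2}$ to control $\|v^\omega\|_{L^\infty_TL^2_x}$, and by the Strichartz estimate with the $\dot H^0$-admissible pair $(\infty,2)$ to control $\|\pa_t v^\omega\|_{L^\infty_TL^2_x}$; both are routine and use only the $L^1_TL^2_x$ bound on $F(v^\omega+z^\omega)$ you already have.
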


We prove
Theorem \ref{ASLWP} (a)
 by considering the equation \eqref{v} 
 satisfied by the nonlinear part $v^\omega$ of a solution $u^\omega$,
and by viewing the linear part $z^{\omega}(t)=S(t)(u_0^{\omega}, u_1^{\omega})$
as a random forcing term.
We then run a simple fixed point argument in Strichartz spaces.
The  improved local-in-time Strichartz estimates in Proposition \ref{proba_S}
play an essential role.
Part (b) is essentially a corollary of part (a),
stating that the almost sure local existence and uniqueness still 
hold for more general initial data.

We note that the proofs 
of
the analogues of Theorem \ref{ASLWP} 
for NLS
on $\R^d$ and $\T^d$
in \cite{NS, BOP1, BOP2} are 
more intricate.
Lastly, notice that Theorem \ref{ASLWP}
is of a local-in-time nature and hence it also 
holds for the energy-critical focusing NLW on $\R^d$, $d=4$ and $5$.
The same comment applies to 
the probabilistic small data global theory (Theorem \ref{scattering} below).

\begin{remark} \label{REM:Zconti}\rm
While Theorem \ref{ASLWP} does not yield the continuous dependence of the flow 
of equation \eqref{NLW} on $\R^d$, $d=4$ and $5$, in 
$H^s(\R^d) \times H^{s-1}(\R^d)$ with $0\leq s< 1$,
we can modify the proof of Theorem \ref{ASLWP}
to obtain a mild form of continuous dependence. 
More precisely, 
we first fix ``good'' initial data 
$(u_0^\omega, u_1^\omega)$
such that Theorem \ref{ASLWP} yields
the corresponding solution of equation \eqref{NLW}
on the time interval $[-T, T]$ for some $T>0$.
Next, we consider $(\wt u_0, \wt u_1) \in H^s(\R^d) \times H^{s-1}(\R^d)$
such that $\|(\wt u_0, \wt u_1)- (u_0^\omega, u_1^\omega)\|_{H^1\times L^2 }\ll 1$.
Then, by modifying the proof of Theorem \ref{ASLWP}, 
we can construct a solution $\wt u$
of \eqref{NLW} on $[-cT, cT]$ with $(\wt u, \dt \wt u)\big|_{t = 0} 
= (\wt u_0, \wt u_1)$.
Moreover, we have 
\[\big\| (\wt u (t) , \dt \wt u(t)) - (u^\omega(t), \dt u^\omega(t))\big\|_{H^1(\R^d)\times L^2(\R^d)}
\leq
C\big\| (\wt u_0 ,  \wt u_1) - (u_0^\omega,  u_1^\omega)\big\|_{H^1(\R^d)\times L^2(\R^d)}\]

\noi
for $|t| \leq cT$.
See the works of Bourgain \cite{Bourgain96} and Colliander and Oh \cite{Colliand_Oh}
for related discussions.

For $d=4$, if we replace the smooth cutoff $\psi$ in the definition of the 
Wiener randomization \eqref{R1}
by the characteristic function $\chi_{Q_0}$, we obtain more.
Namely, Theorem \ref{proba_cont} below
yields the probabilistic continuous dependence
of the flow of equation \eqref{NLW} on $\R^4$ in $H^s(\R^4)\times H^{s-1}(\R^4)$,
$0<s\leq 1$.
\end{remark}

We now turn to the global-in-time behavior
of the local solutions constructed above. 
The same nonlinear estimates as in the proof of Theorem \ref{ASLWP}
together with the improved global-in-time Strichartz estimate (Lemma \ref{globalS})
 yield the following small data global result.

\begin{theorem}[Probabilistic small data global theory]\label{scattering}
Let $d=4$ or $5$ and $\frac{(d+1)(d-2)}{(d-1)(d+2)}\leq s\leq 1$. Given $(u_0, u_1) \in \dot{H}^s(\R^d) \times \dot{H}^{s-1}(\R^d)$, 
let $(u_0^{\omega}, u_1^\omega)$ be the randomization defined in \eqref{R1}, satisfying \eqref{cond}.
Then, there exist $C,c>0$ such that
for each $0<\eps\ll 1$
there exists a set $\Omega_\eps$
with the following properties:

\noindent
{\rm (i)} $P(\Omega_\eps^c)\leq C\exp
\bigg(-\frac{c}{\eps^2\big(\|u_0\|_{\dot{H}^s}+\|u_1\|_{\dot{H}^{s-1}}\big)^2}\bigg)\to 0$ as $\eps\to 0$.

\noindent
{\rm (ii)} For each $\omega\in \Omega_\eps$,
there exists a unique global solution $u^\omega$
of the energy-critical defocusing NLW \eqref{NLW} on $\R^d$ with initial data
\[(u^\omega, \pa_t u^\omega)\big|_{t=0}=(\eps u_0^\omega, \eps u_1^\omega)\]
in the class
\[\big(S(t)(\eps u_0^\omega,\eps u_1^\omega), \pa_t S(t)(\eps u_0^\omega,\eps u_1^\omega)\big)+C(\R; \dot{H}^1(\R^d)\times L^2(\R^d)).\]

\noi

\noindent
{\rm (iii)} Scattering holds for each $\omega\in \Omega_\eps$.
More precisely,
for each $\omega \in \Omega_\eps$,
there exists a pair
 $(v^\omega_{0,\pm}, v^\omega_{1,\pm}) \in \dot{H}^1(\R^d)\times L^2(\R^d)$
such that 
\[\Big\|\big(u^\omega(t),\pa_t u^\omega(t)\big)-
\Big(S(t)\big(\eps u_0^\omega+v^\omega_{0,\pm}, \eps u_1^\omega +v^\omega_{1,\pm}\big), \pa_tS(t)\big(\eps u_0^\omega+v^\omega_{0,\pm}, \eps u_1^\omega +v^\omega_{1,\pm}\big)\Big)\Big\|_{\dot{H}^1\times L^2}\to 0,\]

\noi
as $t\to\pm\infty$.
\end{theorem}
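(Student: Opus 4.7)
The plan is to run a global-in-time fixed point on the nonlinear part of the solution, paralleling the local analysis of Theorem \ref{ASLWP} but with the local Strichartz estimates replaced by their global counterparts. Write $u^\omega = z^\omega + v^\omega$ with $z^\omega(t) = S(t)(\eps u_0^\omega, \eps u_1^\omega)$ and, as in \eqref{v}, $v^\omega$ solves $\pa_t^2 v^\omega - \Delta v^\omega + F(v^\omega + z^\omega) = 0$ with zero initial data. I would work in the global critical Strichartz space
\[ X := C(\R;\dot H^1(\R^d)) \cap L^{\frac{d+2}{d-2}}\big(\R; L^{\frac{2(d+2)}{d-2}}(\R^d)\big), \]
and set up the Duhamel map $\Phi(v)(t) = -\int_0^t \tfrac{\sin((t-t')|\nabla|)}{|\nabla|} F(v+z^\omega)(t')\,dt'$. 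Splitting $F(v+z^\omega) = F(v) + (F(v+z^\omega)-F(v))$, the first piece is handled by the standard energy-critical small data theory, while every term in the difference contains at least one factor of $z^\omega$, which supplies the smallness mechanism.

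Next, I would define the exceptional set. Using the global-in-time improved Strichartz estimate (Lemma \ref{globalS}) together with a Khintchine/large-deviation argument for randomized sums as in Proposition \ref{proba_S} and \cite{BTI}, one obtains, for each wave-admissible Strichartz pair $(q,r)$ of interest,
\[ P\Big(\nm{z^\omega}_{L^q_t L^r_x(\R\times\R^d)} > \lambda\Big) \leq C\exp\!\bigg( - \frac{c\lambda^2}{\eps^2\big(\nm{u_0}_{\dot H^s}+\nm{u_1}_{\dot H^{s-1}}\big)^2}\bigg). \]
Setting $\lambda$ equal to a sufficiently small absolute constant and taking $\Omega_\eps$ to be the intersection of the resulting events over the finitely many Strichartz norms used in the nonlinear estimates yields the quantitative bound in (i). The regularity threshold $s \geq \frac{(d+1)(d-2)}{(d-1)(d+2)}$ is precisely what ensures that $z^\omega$, after using the improved integrability gained from randomization, lies globally in a Strichartz space compatible with the scaling-critical norm. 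On $\Omega_\eps$, the multilinear estimates used in the proof of Theorem \ref{ASLWP}, now applied on all of $\R$, make $\Phi$ a contraction on a small ball in $X$, producing a unique global $v^\omega \in X$.

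For scattering, the global control on $v^\omega$ in both the energy space and the $L^{(d+2)/(d-2)}_t L^{2(d+2)/(d-2)}_x$ Strichartz norm renders the Duhamel integrand absolutely integrable at $t = \pm\infty$ in $\dot H^1\times L^2$, so $(v^\omega, \pa_t v^\omega)$ scatters to some linear profile $S(t)(v^\omega_{0,\pm}, v^\omega_{1,\pm})$; adding back $(\eps u_0^\omega, \eps u_1^\omega)$ inside the linear propagator yields (iii). The main obstacle, and the only part that does not follow verbatim from Theorem \ref{ASLWP}, is upgrading Proposition \ref{proba_S} to the global-in-time improved Strichartz estimate of Lemma \ref{globalS}: one must simultaneously respect the wave-admissibility condition and extract enough smoothing from randomization to reach the critical level on $\dot H^s \times \dot H^{s-1}$, and it is exactly this balance that pins down the lower bound on $s$.
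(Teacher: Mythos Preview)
Your proposal is correct and follows essentially the same approach as the paper: use the global-in-time improved Strichartz estimate (Lemma~\ref{globalS}) to place $z^\omega$ in the critical space $X(\R\times\R^d)=L^{\frac{d+2}{d-2}}_t L^{\frac{2(d+2)}{d-2}}_x$ with small norm on a set $\Omega_\eps$ of the stated probability, then run the same contraction argument as in Theorem~\ref{ASLWP} globally on a small ball in $C(\R;\dot H^1)\cap X(\R\times\R^d)$, and read off scattering from the finiteness of the global $X$-norm of $v^\omega$. Two minor remarks: the paper only needs the single Strichartz norm $X(\R\times\R^d)$ (no intersection over several pairs is required), and rather than splitting $F(v+z^\omega)=F(v)+\big(F(v+z^\omega)-F(v)\big)$ the paper simply bounds $\|F(v+z^\omega)\|_{L^1_tL^2_x}\lesssim \|v\|_X^{\frac{d+2}{d-2}}+\|z^\omega\|_X^{\frac{d+2}{d-2}}$ directly, but your variant works equally well.
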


We refer to \cite{LM, BOP2}
for analogous probabilistic small data global results in 
the context of the quintic NLW on $\R^3$
and the cubic NLS on $\R^d$, $d \geq 3$, respectively.

\begin{remark}\label{rem1.3}
\rm
In proving scattering in Theorem \ref{scattering}, we 
exploit the finiteness of the 
global-in-time Strichartz $L^{\frac{d+2}{d-2}}_t\big(\R; L^{\frac{2(d+2)}{d-2}}_x\big)$-norm of the nonlinear part of a solution $v^\omega$.
This space-time norm is finite almost surely due to the
smallness of the initial data.
In the case of {\it large
data}, the best global space-time bound one could expect
(guided by the literature on the deterministic energy-critical NLW
\cite{Bahouri_Gerard, Bahouri_Shatah, Tao}) is 
\begin{equation}\label{stbound}
\|v\|_{L^{\frac{d+2}{d-2}}_t\Big(\R; L^{\frac{2(d+2)}{d-2}}_x(\R^d)\Big)}\leq C\Big(\|(v,\pa_t v)\|_{L_t^\infty(\R; \dot{H}_x^1(\R^d)\times L_x^2(\R^d))}\Big),
\end{equation}

\noi
where $C(\cdot)$ is a positive non-decreasing function.
For large data, however, we do not have a uniform in time bound on $\|(v(t),\pa_t v(t))\|_{\dot{H}^1\times L^2}$. 
More precisely, the probabilistic energy bound  in Proposition \ref{prop:energy} below
grows in time, diverging as $t\to\infty$.
Therefore, 
even if one could prove \eqref{stbound},
we would still not have sufficient information on $v$ for almost sure large data scattering.

\end{remark}

Before stating the main result of this paper,
we first recall the definition of the set $\mathcal{M}_s$ of measures on $H^s(\R^d)\times H^{s-1}(\R^d)$.
See \cite{BT3}.

\begin{definition}
{\rm
Let $0 \leq s\leq 1$ and $(u_0, u_1)\in H^s(\R^d)\times H^{s-1}(\R^d)$.
The map defined by
\[\omega \mapsto (u_0^\omega,u_1^\omega)\]
is a measurable map from $(\Omega, \mathcal{F}, P)$
into $H^s(\R^d) \times H^{s-1}(\R^d)$ endowed with the Borel $\sigma$-algebra,
since its partial sums 
\[\bigg(\sum_{|n|\leq N}g_{n,0} (\omega)\psi (D-n)u_0, \sum_{|n|\leq N}g_{n,1} (\omega)\psi (D-n)u_1\bigg)\]

\noi
form a Cauchy sequence in $L^2\left(\Omega; H^s(\R^d)\times H^{s-1}(\R^d)\right)$.
Therefore, we define the induced probability measure
\[\mu_{(u_0,u_1)} (A): =P\big((u_0^\omega,u_1^\omega)\in A\big),\]

\noi
 for all measurable $A\subset H^s(\R^d)\times H^{s-1}(\R^d)$.
We then set 
\[\mathcal{M}_s:= \bigcup_{(u_0,u_1)\in H^s\times H^{s-1}} \big\{\mu_{(u_0,u_1)}\big\}.\]
}
\end{definition}

For any $(u_0,u_1)\in H^s(\R^d)\times H^{s-1}(\R^d)$,
the measure $\mu_{(u_0,u_1)}$ is supported on $H^s(\R^d)\times H^{s-1}(\R^d)$
and moreover, $\mu_{(u_0,u_1)}(H^{s'}(\R^d)\times H^{s'-1}(\R^d))=0$ if $s'>s$
and $(u_0,u_1)\notin H^{s'}(\R^d)\times H^{s'-1}(\R^d)$. This is another way of 
saying that the randomization is not regularizing in terms of differentiability.

\medskip

We are now ready to state the main result of this paper.

\begin{theorem}[Almost sure global well-posedness]
\label{main}
Let $d=4$ or $5$, $0 < s\leq 1$ if $d=4$, and $0\leq s\leq 1$ if $d=5$.
There exists a set of full measure $\Sigma \subset H^s(\R^d)\times H^{s-1}(\R^d)$,
i.e. $\mu(\Sigma)=1$ for all $\mu\in\M_s$,  such that 
for any $(\phi_0,\phi_1)\in \Sigma$,
the energy-critical defocusing NLW \eqref{NLW} on $\R^d$ with initial data 
$\left(u,\pa_t u\right)\big|_{t=0}=(\phi_0, \phi_1)$
admits 
a unique global solution 
in the class
\begin{align*}
(u,\pa_tu)&\in \big(S(t)(\phi_0,\phi_1), \pa_t S(t)(\phi_0,\phi_1)\big)+C\left(\R; H^1(\R^d)\times L^2(\R^d)\right)\\
&\subset C\left(\R; H^{s}(\R^d)\times H^{s-1}(\R^d)\right).
\end{align*}

\noindent
Moreover, 
if $\Phi(t): (\phi_0,\phi_1)\mapsto u(t)$
denotes the solution map of equation \eqref{NLW}, 
then $\Phi(t)(\Sigma)$ is a set of full measure for all $t\in\R$.
\end{theorem}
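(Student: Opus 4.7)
The plan is to reduce to a fixed finite time horizon and then run a probabilistic perturbation argument on each. Define $\Sigma$ to be the set of $(\phi_0, \phi_1) \in H^s(\R^d)\times H^{s-1}(\R^d)$ for which \eqref{NLW} admits a unique global solution of the form asserted in the statement. Since a countable intersection of full-measure sets retains full measure under every $\mu \in \M_s$, it suffices to show that for each integer $n \geq 1$ and each base point $(u_0, u_1) \in H^s \times H^{s-1}$, the Wiener randomization $(u_0^\omega, u_1^\omega)$ gives rise to a solution on $[-n, n]$ with probability one; then $\mu_{(u_0, u_1)}(\Sigma) = 1$ for every $(u_0, u_1)$, whence $\mu(\Sigma) = 1$ for all $\mu \in \M_s$. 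The equality $\Phi(t)(\Sigma) = \Sigma$, and hence the full measure of $\Phi(t)(\Sigma)$, will follow from the uniqueness in Theorem \ref{ASLWP} combined with the time reversibility of \eqref{NLW}.

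Fix $T > 0$ and $(u_0, u_1) \in H^s \times H^{s-1}$, and use the decomposition $u^\omega = z^\omega + v^\omega$ with $z^\omega = S(t)(u_0^\omega, u_1^\omega)$. By \eqref{v} the nonlinear part satisfies
\begin{equation*}
\partial_t^2 v^\omega - \Delta v^\omega + F(v^\omega) = F(v^\omega) - F(v^\omega + z^\omega) =: e^\omega,
\end{equation*}
so $v^\omega$ is an approximate solution of the energy-critical NLW whose forcing $e^\omega$ is controlled by the size of $z^\omega$ in suitable Strichartz spaces. I would combine three ingredients: (a) the improved probabilistic local Strichartz bounds for $z^\omega$ from Proposition \ref{proba_S}, making $\|z^\omega\|_{L^q_t L^r_x(I)}$ arbitrarily small on short subintervals $I$ outside an event of exponentially small probability; (b) the probabilistic a priori energy bound of Proposition \ref{prop:energy}, giving $\sup_{|t|\leq T} E(v^\omega(t)) \leq K$ off an event whose probability tends to zero as $K \to \infty$; and (c) the deterministic long-time perturbation lemma for the energy-critical defocusing NLW, which together with the global space-time bound \eqref{stbound} says that an exact solution with energy $\leq K$ has $L^{(d+2)/(d-2)}_t L^{2(d+2)/(d-2)}_x$-norm bounded by a constant $C(K)$, and that any approximate solution sufficiently close to it extends and inherits a comparable bound.

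With these in hand, I would fix $K$ so that $\Omega_{T,K} := \{\sup_{|t|\leq T} E(v^\omega) \leq K\}$ has probability close to one, choose a smallness threshold $\eta = \eta(K)$ required by the perturbation lemma, and use Proposition \ref{proba_S} to partition $[-T, T]$ into $N = N(T, K, \eta)$ subintervals $I_j$ on which $\|z^\omega\|_{L^q_t L^r_x(I_j)} \leq \eta$ outside a further exceptional event of exponentially small probability. On each $I_j$ I would compare $v^\omega$ to the deterministic energy-critical solution $w_j$ with data $(v^\omega(t_{j-1}), \partial_t v^\omega(t_{j-1}))$, controlling $\|e^\omega\|_{L^1_t L^2_x(I_j)}$ by $\eta$ via a H\"older-type estimate and the energy bound, then invoke the perturbation lemma to extend $v^\omega$ across $I_j$ with a Strichartz bound, and iterate for $j = 1, \dots, N$. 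The main obstacle is the accumulation of errors across these $N$ steps: the admissible error size in the perturbation lemma shrinks by a factor depending on $C(K)$ at each application, so $\eta$ must be taken as a rapidly decreasing function of $(N, K)$. Because the exceptional probability in Proposition \ref{proba_S} decays superalgebraically in $1/\eta$ (via the exponential moment bound \eqref{cond}), this tightening remains harmless, and sending $K \to \infty$ along a sequence yields a full-probability subset of $\Omega$ on which $v^\omega$ exists on $[-T, T]$, completing the reduction.
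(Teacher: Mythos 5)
Your globalization argument is essentially the paper's: the decomposition $u^\omega=z^\omega+v^\omega$, the probabilistic energy bound (Proposition \ref{prop:energy}), the improved local-in-time Strichartz estimates (Proposition \ref{proba_S}) giving smallness of $z^\omega$ on short subintervals off an exponentially small event, and an iterated long-time perturbation argument whose error threshold degrades geometrically with the number of subintervals. The paper packages the iteration as a ``good'' deterministic local well-posedness statement (Proposition \ref{prop:main}) and then applies it on windows of length $\tau_\ast$ (Proposition \ref{almost_almost}), but this is the same mechanism you describe, including the observation that the tightening of $\eta$ as a function of $(N,K)$ is absorbed by the Gaussian-type tail in the Strichartz estimates. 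One small correction: the space-time bound you need for the comparison solutions $w_j$ is \eqref{bound_w} for \emph{exact} solutions of the unperturbed equation (proved via profile decomposition), not \eqref{stbound}, which the paper states only as a conjectural bound for the nonlinear part $v$ and explicitly does not have.

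The genuine gap is the final clause, that $\Phi(t)(\Sigma)$ has full measure for every $t\in\R$. Your one-line claim that this ``follows from uniqueness in Theorem \ref{ASLWP} and time reversibility'' does not work as stated: the measures $\mu\in\M_s$ are pushforwards of the randomization map, they are not invariant under the flow, and a full-measure set of data at $t=0$ can a priori be mapped to a small set at $t\neq 0$ (the paper flags exactly this issue before Proposition \ref{invariant_Sigma}). Moreover, the uniqueness available here is only conditional (in small Strichartz balls on a specific interval decomposition), so even the flow property $\Phi(t+s)=\Phi(t)\circ\Phi(s)$ on your maximal set $\Sigma$, and the measurability of that set, require justification. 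The paper's resolution needs two further ingredients you do not supply: (i) an \emph{enhanced} almost sure global well-posedness for data of the form $S(t_\ast)(u_0^\omega,u_1^\omega)+(v_0,v_1)$ with $(v_0,v_1)\in H^1\times L^2$ arbitrary (Corollary \ref{GWPcor}), since $\Phi(t)$ applied to randomized data produces exactly such a perturbed linear evolution rather than a fresh randomization; and (ii) an explicit construction of $\Sigma$ as $\bigcup_{t\in\R}\Sigma_{t\Z}$ with $\Sigma_{\mathcal T}=\tilde\Theta_{\mathcal T}+H^1\times L^2$, where $\tilde\Theta_{\mathcal T}$ is a countable intersection of full-measure sets defined by deterministic conditions on the linear flow that are checked to be preserved under $(\phi_0,\phi_1)\mapsto(S(t_0)(\phi_0,\phi_1),\pa_t S(t_0)(\phi_0,\phi_1))$ (Proposition \ref{invariant_Sigma}). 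Without this construction the last assertion of the theorem is not proved.
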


The proof of Theorem \ref{main}
is based on a {\it probabilistic perturbation theory}. 
More precisely, we combine the global space-time bounds of solutions of the 
energy-critical defocusing NLW from \cite{Bahouri_Gerard, Tao}
with a perturbation lemma,
and 
design a
``good'' deterministic local well-posedness theory
(Proposition \ref{prop:main})
for a perturbed NLW
of the form: 
\begin{equation}\label{detv}
\begin{cases}
\pa_{t}^2 v-\Delta v+F(v+f)=0\\
(v,\pa_t v)\big|_{t=0}=(v_0,v_1)\in \dot{H}^1(\R^d)\times L^2(\R^d),
\end{cases}
\end{equation}  
where $f$ satisfies 
some smallness assumption on small time intervals.
Recall that the usual local well-posedness 
argument for the energy-critical NLW \eqref{NLW}
with initial data $(v_0, v_1)\in \dot{H}^1(\R^d)\times L^2(\R^d)$
yields a local time of existence depending on 
the profile of $(v_0, v_1)$.
The term ``good'' local well-posedness
refers to the fact that
the local time of existence depends only on 
the $\dot H^1\times L^2$-norm of the initial data $(v_0,v_1)$
and on the perturbation $f$.

Given randomized initial data $(u_0^\omega,u_1^\omega)\in H^s(\R^d)\times H^{s-1}(\R^d)$, with $0< s<1$ if $d=4$, and $0\leq s< 1$ if $d=5$, 
let $u^\omega$ be  the corresponding solution of equation \eqref{NLW}.
Also, we denote the linear and nonlinear parts of $u^\omega$
by 
$z^{\omega}(t)=S(t)(u_0^{\omega}, u_1^{\omega})$
and  $v^\omega(t):=u^\omega(t)-z^\omega(t)$
as before.
Then, 
a crucial ingredient of the proof of Theorem \ref{main}
is 
a probabilistic energy 
bound for $v^\omega$ (Proposition \ref{prop:energy}).
The probabilistic energy bound follows
from the improved local-in-time Strichartz estimates
(Proposition \ref{proba_S}) 
and (nonlinear) Gronwall's inequality. 
These Strichartz estimates are also the key 
in showing  that $z^\omega$ 
satisfies almost surely the smallness assumption in the ``good'' local well-posedness theory.
Finally, noting that $v^\omega$ satisfies \eqref{detv}
with $f=z^\omega$, 
the almost sure global existence of $v^\omega$  follows
by iterating the ``good'' local well-posedness.
We remark that the nonlinear part $v^\omega$
satisfies $(v^\omega(t),\pa_t v^\omega(t))\in (H^1(\R^d),L^2(\R^d))$ for all $t\in\R$,
and, in particular, it has improved regularity in comparison to the linear part $z^\omega$,
which is merely in $H^s(\R^d)$ with $0< s<1$ if $d=4$, and $0\leq s<1$ if $d=5$.
In \cite{BOP2}, the author with B\'enyi and Oh considered
the same problem for the energy-critical cubic NLS on $\R^4$.
In this case, they could only prove `conditional'
almost sure global well-posedness, 
assuming a probabilistic energy bound on the nonlinear part of a solution.
See Remark \ref{HypA} below.

Theorem \ref{main}
does not cover the case $s=0$ in dimension $d=4$. 
This is due to the use of the Sobolev embedding $W^{s,r}(\R^4)\subset L^\infty(\R^4)$
for $sr>4$, which requires $s>0$. See Proposition \ref{proba_S} (iii) and Proposition  \ref{prop:energy}. For $d=5$, one does not need to use such a Sobolev embedding,
and thus we can include the case $s=0$.

Notice that the set of full measure $\Sigma$ of initial data in Theorem \ref{main}
is constructed in such a way that $\Phi(t)(\Sigma)$ remains of full measure 
for all $t\in\R$, in other words, the measure does not become smaller
under the evolution of the flow of \eqref{NLW}. 
See \cite{BT3, OQ} for related results.

In the definitions of the uniform decomposition \eqref{Ziv2}
and of the Wiener randomization \eqref{R1}, 
we used a smooth cutoff function $\psi$.
Theorems \ref{ASLWP},  \ref{scattering}, and \ref{main} still hold
even when we replace $\psi$ by the characteristic  function
$\chi_{Q_0}$.

We present the proof of Theorem \ref{main} in Section 5.
In particular, 
Theorem \ref{main}
is a consequence of Theorem \ref{GWP}, Corollary \ref{GWPcor},
and Proposition \ref{invariant_Sigma}.
In addition, in Proposition \ref{almost_almost}, we prove space-time bounds for
the nonlinear part of the solution $v^\omega$ on any given finite time interval $[0,T]$.
Unfortunately, these bounds diverge as $T\to\infty$,
and hence the present paper does not provide
global space-time bounds for $v^\omega$. 
As a consequence, our arguments are not sufficient to obtain scattering 
of the global solution $u$ in Theorem \ref{main}
to a linear solution (which is standardly proved using global space-time bounds).
Therefore, almost sure large data scattering remains a challenging open problem
and new ideas seem to be needed to tackle it. (See also Remark \ref{rem1.3} above.)

\medskip
Our last result concerns the probabilistic continuous dependence
of the flow of equation \eqref{NLW} on $\R^4$.
As mentioned above,
Christ, Colliander, and Tao \cite{Christ_Colliander_Tao_main}
proved ill-posedness of 
\eqref{NLW} 
in $H^s(\R^4)\times H^{s-1}(\R^4)$ with $0<s<1$ in the deterministic theory,
by showing that
the solution map is not continuous at zero.
In the following, we show
that the solution map is, however,  continuous {\it in probability}
 in $H^s(\R^4)\times H^{s-1}(\R^4)$ with $0< s<1$.
The notion of probabilistic continuous dependence of the flow with respect to the the initial data
used here was first introduced by Burq and Tzevtkov in \cite{BT3}.

\begin{theorem}[Probabilistic continuous dependence]
\label{proba_cont}
Let  $d=4$ and $0 < s\leq 1$. 
Assume that in the definition of the randomization
\eqref{R1}, the smooth cutoff $\psi$
is replaced
by the characteristic function $\chi_{Q_0}$ of the unit cube centered at the origin
$Q_0=\left[-\frac 12, \frac 12\right)^4$.
Assume also  that the probability distributions $\mu_{0,j}$, $\mu_{n,j}^{(1)}$, $\mu_{n,j}^{(2)}$,
$n\in\mathcal I$, $j=0,1$ are symmetric\footnote{A probability measure $\theta$ on $\R$ is called symmetric if $\int_\R f(x)d \theta (x)=\int_\R f(-x)d\theta (x)$ for all $f\in L^1(d\theta)$.}. 

Let $T>0$, $\mu\in\M_s$, $R>0$, and 
\[B_R:=\left\{(w_0,w_1)\in H^{s}(\R^4)\times H^{s-1}(\R^4) : \|(w_0,w_1)\|_{H^{s}(\R^4)\times H^{s-1}(\R^4)}\leq R\right\}\]
be the closed ball of radius $R$ centered at the origin
in $H^{s}(\R^4)\times H^{s-1}(\R^4)$.

If $\Phi(t): (\phi_0,\phi_1)\mapsto u(t)$
is the solution map of the energy-critical defocusing cubic NLW \eqref{NLW} on $\R^4$,
defined $\mu$-almost everywhere in Theorem \ref{main},
then for any $\delta,\eta>0$ it follows that
\begin{align}\label{prob_cont_eq}
\mu \otimes &\mu \bigg(\left((w_0, w_1), (w_0',w_1')\right)\in \left(H^{s}(\R^4)\times H^{s-1}(\R^4)\right)^2:\\
&
\left\|\left(\Phi(t)(w_0,w_1)-\Phi(t)(w_0',w_1'), \pa_t \left(\Phi(t)(w_0,w_1)-\Phi(t)(w_0',w_1')\right)\right)\right\|_{L^{\infty}([0,T]; H^{s}\times H^{s-1})}>\delta \notag\\
&\Big| \quad (w_0,w_1), (w_0',w_1')\in B_R \text{ and } \|(w_0,w_1)-(w_0',w_1')\|_{H^{s}(\R^4)\times H^{s-1}(\R^4)}<\eta \bigg)\notag\\
&\leq g(\delta,\eta),\notag
\end{align}
where $g(\delta,\eta)$ satisfies
\[\lim_{\eta\to 0} g(\delta,\eta)=0,\]

\noi
for all $\dl > 0$.
\end{theorem}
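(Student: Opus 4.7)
The plan is to adapt the strategy of Burq--Tzvetkov \cite{BT3} to the Euclidean setting.  I would fix $\mu=\mu_{(u_0,u_1)}\in\M_s$ and realize two samples $(w_0,w_1)$, $(w_0',w_1')$ as Wiener randomizations of the same $(u_0,u_1)$ driven by independent families $\{g_{n,j}\}$, $\{g_{n,j}'\}$ on the product space $(\Omega\times\Omega, P\otimes P)$.  The key structural observation is that the difference $(w_0-w_0',w_1-w_1')$ is itself the Wiener randomization of $(u_0,u_1)$ driven by $\{g_{n,j}-g_{n,j}'\}$, which by the symmetry hypothesis is again symmetric, independent, and satisfies \eqref{cond}.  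The use of the sharp cutoff $\chi_{Q_0}$ rather than the smooth $\psi$ is crucial because it gives the exact orthogonality identity
\[
\|(w_0-w_0',w_1-w_1')\|_{H^s\times H^{s-1}}^2
=\sum_{n\in\Z^4}\sum_{j=0}^1 \langle n\rangle^{2(s-j)}|g_{n,j}-g_{n,j}'|^2\|\chi_{Q_0}(D-n)u_j\|_{L^2}^2,
\]
which translates the $H^s$-smallness hypothesis into quantitative smallness of the driving coefficients of $z-z':=S(t)(w_0-w_0',w_1-w_1')$.

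Given both samples lying in $B_R$, Theorem \ref{main} produces global solutions $u$, $u'$, and I would decompose $u=z+v$, $u'=z'+v'$ with $v,v'\in C([0,T];H^1\times L^2)$.  For a threshold $M\gg 1$ I would introduce a good event $\mathcal G_M\subset\Omega\times\Omega$ on which the probabilistic energy bound of Proposition \ref{prop:energy} yields $\|(v,\partial_t v)\|_{L_t^\infty([0,T];H^1\times L^2)}, \|(v',\partial_t v')\|_{L_t^\infty([0,T];H^1\times L^2)}\leq M$, and the improved local-in-time Strichartz estimates of Proposition \ref{proba_S} applied separately to $z$, $z'$, and the randomization $z-z'$ control the Strichartz norms used in the contraction argument of Theorem \ref{ASLWP}.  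The large-deviation tails underlying those propositions give $(P\otimes P)(\mathcal G_M^c)\leq C\exp(-cM^\beta)$ with constants depending only on $R$ and $T$.

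On $\mathcal G_M$ the analysis becomes deterministic.  The nonlinear difference $h:=v-v'$ has zero initial data and satisfies
\[
\partial_t^2 h-\Delta h=-\bigl[F(v+z)-F(v'+z')\bigr].
\]
Splitting the right-hand side as $[F(v+z)-F(v'+z)]+[F(v'+z)-F(v'+z')]$ and using the cubic structure $F(u)=|u|^2u$ on $\R^4$ together with the bounds available on $\mathcal G_M$, a standard energy estimate followed by Gronwall yields, for the Strichartz-type space $Y$ used in the perturbation,
\[
\|(h,\partial_t h)\|_{L_t^\infty([0,T];H^1\times L^2)}\leq C(M,T)\|z-z'\|_{Y([0,T])}.
\]
Combined with the linear bound $\|(z-z',\partial_t(z-z'))\|_{L_t^\infty H^s\times H^{s-1}}\leq \eta$ (from boundedness of $S(t)$) and the embedding $H^1\hookrightarrow H^s$, this controls the total difference $(u-u',\partial_t(u-u'))$ in $L_t^\infty H^s\times H^{s-1}$ by $\eta+C(M,T)\|z-z'\|_Y$.

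To close the argument I would choose $M=M(\eta)$ growing slowly as $\eta\to 0$ (for instance a fractional power of $\log(1/\eta)$) so that $(P\otimes P)(\mathcal G_{M(\eta)}^c)\to 0$, and extract $g(\delta,\eta)$ by combining this measure with the resulting deterministic bound on $\mathcal G_{M(\eta)}$.  Because the probabilistic estimates factor through the symmetric independent decomposition $(g,g')\mapsto ((g+g')/\sqrt 2,(g-g')/\sqrt 2)$, conditioning on the $H^s$-smallness of $(w_0-w_0',w_1-w_1')$ leaves the bounds on $z$, $z'$, $v$, $v'$ essentially intact.  The hardest step, and the main obstacle, is producing a quantitative bound on $\|z-z'\|_Y$ that tends to zero with $\eta$ on a high-probability subset of $\mathcal G_M$: pure deterministic Strichartz gives only $\eta$-smallness at the Strichartz pair at $\dot H^s$-scaling, whereas the perturbative analysis of the cubic NLW on $\R^4$ demands a pair at $\dot H^1$-scaling where only the probabilistic estimates are effective.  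The resolution will likely combine both through interpolation, yielding a bound of the form $\|z-z'\|_Y\lesssim M^{1-\theta}\eta^\theta$ for some $\theta\in(0,1)$, and thereby the desired decay $g(\delta,\eta)\to 0$ as $\eta\to 0$; this is precisely the step where the symmetry of the distributions and the sharp cutoff $\chi_{Q_0}$ are both essential.
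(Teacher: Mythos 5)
Your overall architecture (realize the two samples on a product space, split each solution into linear and nonlinear parts, restrict to a good event, run an energy/Gronwall estimate on $v-v'$, and let the threshold grow slowly as $\eta\to 0$) matches the paper's. However, there are two genuine gaps.

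First, the step you yourself flag as ``the hardest step'' --- a quantitative, high-probability bound on the Strichartz norms of $z-z'$ that tends to zero with $\eta$ \emph{conditionally on} the $H^s$-smallness of the difference --- is in fact the central technical ingredient and is not supplied by either of the mechanisms you suggest. The rotation $(g,g')\mapsto((g+g')/\sqrt2,(g-g')/\sqrt2)$ produces independent variables only in the Gaussian case, so it cannot be used for general distributions satisfying \eqref{cond}; and no interpolation between the deterministic $H^s$ bound and an unconditional probabilistic Strichartz bound is available, because conditioning on an event of a priori unknown (possibly small) probability can destroy the large-deviation tails. The paper's resolution (Proposition \ref{cond_proba}) is the Burq--Tzvetkov re-randomization device: multiply the coefficients by an auxiliary independent Bernoulli family, which leaves the law unchanged by symmetry (Lemma \ref{lemma:A9}), decouples the conditioning via Lemma \ref{lemma:A8}, and reduces matters to the unconditional estimates of Proposition \ref{proba_S} applied to the Bernoulli randomization of the \emph{fixed} pair $(w_0-w_0',w_1-w_1')$ of $H^s$-norm at most $\eps$; the sharp cutoff enters only to guarantee $\|h\odot\phi\|_{H^\sigma}=\|\phi\|_{H^\sigma}$. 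This yields directly $\|z-z'\|_{L^q_TL^r_x}\leq\eta^{1-\alpha}$ off a set of conditional measure $Ce^{-c/(T^3\eta^{2\alpha})}$, with no interpolation.

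Second, your good event $\mathcal G_M$ carries only the $L^\infty_t(H^1\times L^2)$ energy bound on $v,v'$ plus Strichartz control of the linear parts, and this is not enough to close the Gronwall argument for $h=v-v'$. For the cubic nonlinearity on $\R^4$ the exponential factor in Gronwall involves $\|v\|_{L^2_TL^8_x}^2+\|v'\|_{L^2_TL^8_x}^2$, and $\dot H^1(\R^4)$ embeds only into $L^4$, so the energy bound does not control these space-time norms. One needs a quantitative bound on the Strichartz norms of the \emph{nonlinear} parts over all of $[0,T]$ --- the paper's Step 2, obtained by combining the iterated ``good'' local theory (the bound \eqref{estim_strich} of Proposition \ref{almost_almost}) with the same Bernoulli conditioning trick --- and this is precisely the ingredient that distinguishes the energy-critical problem from the subcritical case of \cite{BT3}, where the energy bound alone suffices. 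Without it, your estimate $\|(h,\partial_t h)\|_{L^\infty_t(H^1\times L^2)}\leq C(M,T)\|z-z'\|_Y$ does not follow from the stated hypotheses on $\mathcal G_M$.
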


In stating Theorem \ref{proba_cont}, 
we assumed that
the randomization
\eqref{R1}
was implemented with the characteristic function
 $\chi_{Q_0}$, instead of the smooth cutoff function $\psi$.
In particular, the support of $\chi_{Q_0}(\cdot -n)$ and that of $\chi_{Q_0}(\cdot -m)$,
namely the cubes $n+Q_0$ and $m+Q_0$, are disjoint
for $n,m\in\Z^4$, $n\neq m$.
This plays an important role in the proof.
We believe that a statement analogous to Theorem \ref{proba_cont} is true
for a general smooth cutoff function $\psi$.
Unfortunately, we do not know how to prove such a claim at this point.

One of the several key points in the proof of Theorem \ref{proba_cont}
is the fact that the power of the nonlinearity needs to be larger or equal to three 
(see Step 3 of the proof, where we used the Strichartz norm $L^2_tL^8_x(\R^4)$).
Therefore, our method does not apply to the energy-critical defocusing NLW on $\R^5$,
for which the power of the nonlinearity is $\frac 73$.

In order to prove Theorem \ref{proba_cont}, 
we first control the linear parts of the solutions $\Phi(t)(w_0,w_1)$ and $\Phi(t)(w_0',w_1')$, as well as their difference. 
The key element here is represented by the improved local-in-time Strichartz estimates,
while the context in which they are used is analogous to that in \cite{BT3}.
The novel element of the proof
is the control of the Stricharz norms of the nonlinear parts 
of the solutions, which allows us
to control the difference of these nonlinear parts.
In \cite{BT3}, in the case of the energy-subcritical cubic NLW on $\T^3$,
a mere probabilistic energy bound was sufficient to control the 
difference of the nonlinear parts.

\begin{remark}\label{HypA}
\rm
In the spirit of \cite{BOP2},
we can also prove `conditional'
almost sure global well-posedness
for the energy-critical defocusing quintic NLW on $\R^3$, 
provided we assume the following Hypothesis\footnote{Very recently,
the author with Tadahiro Oh
proved in \cite{ OPNLW3D} the almost sure
global well-posedness of the energy-critical defocusing
quintic nonlinear wave equation on $\R^3$
below the energy space.
The main new ingredient in the proof
is a uniform probabilistic energy bound for 
approximating random solutions.
In particular, we proved almost sure global well-posedness
without directly establishing
the Hypothesis 
above.
Nevertheless, the Hypothesis follows as a byproduct of
the construction in \cite{OPNLW3D}.}, i.e.~
a probabilistic energy bound on the nonlinear part  $v^\omega$
of the solution:

\smallskip
\noindent
{\bf Hypothesis:}
 Given $T,\eps>0$, there exists $C(T,\eps)$ non-decreasing in $T$
 and non-increasing in $\eps$ and there exists $\tilde \Omega_{T,\eps}$
 with $P(\tilde\Omega_{T,\eps}^c)<\eps$ such that, for all $\omega\in\tilde \Omega_{T,\eps}$,
 the solution $v^\omega$ of \eqref{v} satisfies
 \[\|v^\omega(t)\|_{L^\infty([0,T],\dot{H}^1(\R^3)\times L^2(\R^3))}\leq C(T,\eps).\]

\noi
Furthermore, under this Hypothesis, 
one can also prove probabilistic continuous dependence 
of the flow of the energy-critical defocusing quintic NLW on $\R^3$.
The proofs of these results
follow the same lines as those of Theorem \ref{main} and Theorem \ref{proba_cont}.\end{remark}


\subsection{Probabilistic well-posedness results regarding NLW}
 To the best of the author's knowledge,
Theorem \ref{main} is the first result
of almost sure global well-posedness
for an {\it energy-critical} hyperbolic/dispersive PDE
with large data below the energy space. 
In the following, we briefly mention 
some of the references in the literature regarding almost sure global well-posedness
of NLW. All the results below concern defocusing NLW and we 
do not explicitly mention this in the following.

In what concerns 
NLW on $\T^d$, Burq and Tzvetkov \cite{BT3} proved probabilistic global well-posedness 
for (energy-subcritical) cubic NLW on $\T^3$ for data 
in $H^s(\T^3)\times H^{s-1}(\T^3)$, $0\leq s<1$, while Burq, Thomann, and Tzvetkov \cite{BTT2} considered
(energy-critical and supercritical) cubic NLW on $\T^d$, $d\geq 4$, and proved almost sure global existence, without uniqueness, for data in  $H^s(\T^d)\times H^{s-1}(\T^d)$, $0<s<1$.

Burq and Tzvetkov \cite{BTIMRN} 
also considered subcubic NLW 
on the unit ball in $\R^3$
with the power nonlinearity $|u|^{p-1} u$, $1<p<3$,
and proved almost sure global well-posedness of NLW
for a large set of radially symmetric data in $\bigcap_{s<\frac 12}H^s$.
More precisely, they constructed the Gibbs measure for NLW
on the unit ball and the global-in-time flow on the support of the Gibbs measure,
proving also the invariance of the Gibbs measure under the flow. 
The almost sure global well-posedness part
 was extended by the same authors for $1<p<4$ in \cite{BTII},
and the invariance of the measure for $p=3$ was proved by de Suzzoni in \cite{Suzzoni2011}.
Finally, Bourgain and Bulut extended the above results for all $1<p<5$. 
Notice that for $3\leq p<5$, the above almost sure global existence results are below the  
scaling critical regularity.

Let us now turn our attention to known results on $\R^d$.
De Suzzoni \cite{Suzzoni2013} considered
subquartic NLW on $\R^3$, namely  $3\leq p <4$,
and proved global existence, uniqueness, and scattering
for a set of full measure of radially symmetric data of low regularity, 
that do not belong to $L^2 (\R^3)$.
In \cite{Suzzoni2012}, de Suzzoni proved 
an almost sure global existence, uniqueness, and scattering result
for cubic NLW on $\R^3$, without the radial symmetry assumption. 
The Penrose transform played an essential role
in both articles.
Recently, L\"uhrmann and Mendelson \cite{LM} 
proved almost sure global well-posedness
of 
energy-subcritical NLW on $\R^3$ with a power nonlinearity
$|u|^{p-1} u$, $3\leq p <5$, 
with 
random initial data in $H^s(\R^3) \times H^{s-1}(\R^3)$, for some $s<1$.
For $p \in \big(\frac{7+\sqrt{73}}{4},5\big)$, 
the regularity of initial data can be taken below the critical regularity dictated by the scaling
invariance. 
For the energy-critical NLW on $\R^3$ $(p=5)$,
they obtained small data almost sure global well-posedness and scattering.

Finally, 
there are other classes of almost sure global well-posedness results on $\R^d$
and other unbounded domains.
They involve the construction of invariant Gibbs measures on such domains.
In particular, on $\R^d$, the typical functions in the support of Gibbs measures
do not decay at spatial infinity and thus do not belong to the Lebesgue spaces $L^p(\R^d)$ or Sobolev spaces $H^s(\R^d)$.
See, for example, the work of McKean and Vaninsky \cite{MV} on $\R$
and the recent work of Xu \cite{Xu} concerning cubic NLW on $\R^3$ with radial symmetry.


\subsection{Notations}

If $u$ satisfies the wave equation
\[\pa_{t}^2u-\Delta u+F(u)=0,\]
on the interval $I$ containing $t_0$ and $t$, 
then the following Duhamel's formula holds:
\begin{equation}
\label{Duhamel}
u(t)=S(t-t_0)(u(t_0), \pa_tu(t_0))
-\int_{t_0}^t \frac{\sin ((t-t')|\nabla|)}{|\nabla|}F(u(t'))dt'.
\end{equation}

If $F(u)=|u|^{\frac{4}{d-2}}u$, we use the fundamental theorem of calculus to write
\[F(u)-F(\tilde{u})=(u-\tilde u)\int_0^1 F'\left(\lambda u+(1-\lambda)\tilde u\right)d\lambda,\]
and we immediately deduce that
\begin{equation}\label{Fbasic}
|F(u)-F(\tilde u)|\lesssim |u-\tilde u|\left(|u|^{\frac{4}{d-2}}+|\tilde u|^{\frac{4}{d-2}}\right).
\end{equation}

\begin{definition}
Let $\gamma\in\R$ and $d\geq 2$. We say that $(q,r)$ is a $\dot{H}^\gamma(\R^d)$-wave admissible pair
if $q\geq 2$, $2\leq r<\infty$,
\[\frac{1}{q}+\frac{d-1}{2r}\leq \frac{d-1}{4}\]
and
\[\frac 1q+\frac dr=\frac d2-\gamma.\]
\end{definition}

We recall some Strichartz estimates for wave equations on $\R^d$. 
For more details as well as other Strichartz estimates, see \cite{Ginibre, Lindblad, Keel}.
\begin{proposition}[Strichartz estimates on $\R^d$]
\label{prop:Strichartz}
Let $d\geq 2$, $\gamma>0$, let $(q,r)$ be a $\dot{H}^\gamma(\R^d)$-wave admissible pair, 
and $(\tilde{q},\tilde{r})$ be a $\dot{H}^{1-\gamma}(\R^d)$-wave admissible pair. 
If $u$ solves
\[\pa_{t}^2 u-\Delta u+F=0  \quad \text{ with } \quad  u(0)=u_0, \quad \pa_t u(0)=u_1\]
on $I\times\R^d$, where $0\in I$, then
\begin{align}\label{Strichartz}
\|u\|_{L^\infty_t(I;\dot{H}^\gamma_x)}+\|\pa_t u\|_{L^\infty_t(I; \dot{H}^{\gamma-1}_x)}+\|u\|_{L^q_t(I; L^r_x)}\lesssim
\|u_0\|_{\dot{H}^{\gamma}}+\|u_1\|_{\dot{H}^{\gamma-1}}+\|F\|_{L^{\tilde{q}'}_t(I; L^{\tilde{r}'}_x)}.
\end{align}
\end{proposition}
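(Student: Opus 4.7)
The plan is to deduce \eqref{Strichartz} from the fundamental dispersive estimate for the wave propagator, combined with abstract Strichartz machinery and a Littlewood--Paley decomposition to handle the scaling. By Duhamel's formula \eqref{Duhamel}, it suffices to treat separately the homogeneous part $S(t)(u_0, u_1)$ and the retarded Duhamel term with forcing $F$. The $L^\infty_t \dot H^\gamma$ and $L^\infty_t \dot H^{\gamma-1}$ bounds come for free from the unitarity of $e^{\pm it|\nabla|}$ on $L^2$ (after multiplication by $|\nabla|^\gamma$ and $|\nabla|^{\gamma-1}$), combined with Minkowski's inequality applied to the Duhamel integral and Hölder in time.

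The core of the proof lies in the mixed space-time bounds. First I would frequency-localize: writing $u_0 = \sum_{N \in 2^\Z} P_N u_0$ and analogously for $u_1$ and $F$ via a Littlewood--Paley decomposition, the scaling symmetry \eqref{Zscaling} reduces the problem to data localized at unit frequency. The admissibility conditions $\frac{1}{q} + \frac{d}{r} = \frac{d}{2} - \gamma$ and $\frac{1}{\tilde q'} + \frac{d}{\tilde r'} = \frac{d}{2} + 2 - (1 - \gamma)$ are precisely what make the frequency pieces sum back to the homogeneous Sobolev norm (via orthogonality in $L^2$ or Bernstein's inequality where appropriate). At unit frequency, the key input is the dispersive estimate
\[
\big\|e^{\pm it|\nabla|} P_1 f \big\|_{L^\infty_x(\R^d)} \lesssim (1+|t|)^{-(d-1)/2} \|f\|_{L^1_x(\R^d)},
\]
obtained by stationary phase applied to the oscillatory integral representation of the propagator. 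Interpolating with the trivial $L^2 \to L^2$ bound yields $L^{r'}_x \to L^r_x$ decay of rate $|t|^{-(d-1)(\frac{1}{2} - \frac{1}{r})}$.

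A $TT^*$ argument together with the Hardy--Littlewood--Sobolev inequality then produces the homogeneous Strichartz estimate for all admissible pairs $(q,r)$ strictly away from the endpoint, and duality together with the Christ--Kiselev lemma handles the retarded inhomogeneous estimate in the non-endpoint range. To capture the endpoint pair $(q,r) = \big(2, \tfrac{2(d-1)}{d-3}\big)$ for $d \geq 4$, the naive $TT^*$ + HLS scheme fails (the relevant kernel is on the borderline of $L^1$), so one invokes the bilinear interpolation argument of Keel--Tao, based on a Whitney-type decomposition of $\{(t,s) : t \neq s\}$ into dyadic regions. The main obstacle I anticipate is precisely this endpoint case; once it is handled, summing over dyadic frequencies using orthogonality in the $L^2$-based norms on the right and Minkowski's inequality on the left (after noting that $L^r_x$ and $L^{\tilde r'}_x$ with $r, \tilde r' \geq 2$ allow such summation given the admissibility scaling) completes the bound \eqref{Strichartz}.
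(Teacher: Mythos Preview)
The paper does not actually prove Proposition~\ref{prop:Strichartz}; it is stated as a known result with the sentence ``For more details as well as other Strichartz estimates, see \cite{Ginibre, Lindblad, Keel}.'' So there is no proof in the paper to compare against.

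Your sketch is a correct outline of the standard argument found in those references: dispersive decay, $TT^\ast$ and Hardy--Littlewood--Sobolev for the non-endpoint case, the Keel--Tao bilinear interpolation for the endpoint, and Littlewood--Paley summation to pass from unit-frequency data to general data. One small point worth tightening in your last paragraph: when re-summing the dyadic pieces in $L^q_t L^r_x$ with $q\neq r$, Minkowski alone is not quite enough; the clean way is to invoke the Littlewood--Paley square-function characterization of $L^r_x$ (valid for $1<r<\infty$) so that the $\ell^2_N$ sum over frequencies commutes with the $L^r_x$ norm, and then use the $\ell^2_N$ orthogonality of the right-hand side. This is routine and is exactly how the cited references proceed, so your outline is sound.
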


\noi
For convenience, in the following, we often denote the space $L^q_t(I; L^r_x)$  by $L^q_IL^r_x$,
or by $L^q_TL^r_x$ if $I=[0,T]$.

The wave admissible pairs for $d=4$ and $5$ that appear most often in this paper are:
\begin{itemize}

\item $\dot{H}^1$-wave admissible: $\big(\frac{d+2}{d-2}, \frac{2(d+2)}{d-2}\big)$ if $d=4$ or $5$, and $(2,8)$ if $d=4$

\item $\dot{H}^{0}$-wave admissible: $(\infty,2)$.

\end{itemize}

\noindent
In particular, the Strichartz space $L^{\frac{d+2}{d-2}}_t\big(I;L^{\frac{2(d+2)}{d-2}}_x(\R^d)\big)$ will appear very often in our analysis, and therefore we fix the notation:
\begin{align*}
\|f\|_{X(I\times\R^d)}:&=\|f\|_{L^{\frac{d+2}{d-2}}_t\big(I;L^{\frac{2(d+2)}{d-2}}_x(\R^d)\big)}, 
\end{align*}
where $I\subset\R$ denotes a time interval.

\medskip

This paper is organized as follows. 
In Section 2, we prove probabilistic Strichartz estimates.
Section 3 is dedicated to the proof of Theorems \ref{ASLWP}
and \ref{scattering}. In Section 4, we design
a ``good'' deterministic local well-posedness theory
using a perturbation lemma.
Section 5 contains the
proof of Theorem \ref{main} (reformulated as Theorem \ref{GWP}, Corollary \ref{GWPcor}, and Proposition  \ref{invariant_Sigma}).
Finally, in Section 6 we prove Theorem \ref{proba_cont}.


\section{Probabilistic estimates}

In this section we recall some basic properties of randomized functions
and then present some improved Strichartz estimates under
randomization.

First recall the following probabilistic estimate.
\begin{lemma}\label{basic_lemma}
Let $\{g_n\}_{n\in \Z^d}$ be a sequence of complex-valued,
zero mean, random variables
such that $g_{-n}=\overline{g_n}$ for all 
$n\in\Z^d$.
Assume that $g_0$, ${\rm Re}\, g_n$, ${\rm Im}\, g_n$, 
$n\in \mathcal I:=\bigcup_{k=0}^{d-1} \Z^k \times \Z_+^\ast\times \{0\}^{d-k-1}$,
are independent and have associated distributions
$\mu_0$, $\mu_n^{(1)}$, respectively $\mu_n^{(2)}$.
Assume that there exists $c>0$ such that
\begin{equation}\label{distrib}
\int_{\R}e^{\gamma x}d\mu_0\leq e^{c\gamma^2} 
\text{  and }\int_{\R}e^{\gamma x}d\mu_n^{(k)}\leq e^{c\gamma^2}
\end{equation}
for all $\gamma\in\R$, $n\in\Z^d$, $k=1,2$.

Then, there exists $C>0$ such that the following holds:
\begin{equation}\label{proba}
\Big\|\sum_{n\in\Z^d}g_n(\omega)c_n\Big\|_{L^p(\Omega)}\leq C\sqrt{p} \|c_n\|_{\ell^2_n(\Z^d)}
\end{equation}
for any $p\geq 2$
and any sequence $\{c_n\}_{n\in\Z^d}\in \ell^2 (\mathbb{Z}^d)$ satisfying $c_{-n}=\overline{c_n}$
for all $n\in\Z^d$.
\end{lemma}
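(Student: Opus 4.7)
The plan is to establish the estimate by combining the sub-Gaussian moment generating function bound \eqref{distrib} with independence to control the MGF of the full sum, then pass to an $L^p$ estimate via the distribution function formula. This is the standard Khintchine/sub-Gaussian route, the only wrinkle being the Hermitian symmetry $g_{-n}=\overline{g_n}$, $c_{-n}=\overline{c_n}$.

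First I would reduce the sum over $\Z^d$ to a sum over the independent collection $\{g_0, \Re g_n, \Im g_n\}_{n\in\mathcal I}$. Pairing $n$ and $-n$, the symmetries give $g_n c_n + g_{-n}c_{-n} = 2\Re(g_n c_n) = 2(\Re g_n)(\Re c_n) - 2(\Im g_n)(\Im c_n)$, so one can rewrite
\[\sum_{n\in\Z^d} g_n(\omega) c_n = g_0(\omega) c_0 + 2\sum_{n\in\mathcal I}\bigl((\Re g_n)(\omega)(\Re c_n) - (\Im g_n)(\omega)(\Im c_n)\bigr),\]
which is a real linear combination of independent real random variables, each satisfying the sub-Gaussian hypothesis \eqref{distrib}, with coefficients whose squares sum (up to a harmless constant) to $\|c_n\|_{\ell^2_n}^2$.

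Next, for a generic such real sum $X=\sum_k a_k X_k$ with $X_k$ independent and $\E[e^{\gamma X_k}]\leq e^{c\gamma^2}$, independence and \eqref{distrib} give
\[\E\bigl[e^{\gamma X}\bigr] = \prod_k \E\bigl[e^{\gamma a_k X_k}\bigr] \leq \prod_k e^{c\gamma^2 a_k^2} = e^{c\gamma^2 \|a\|_{\ell^2}^2},\]
and the same bound holds for $-X$ by symmetry of the estimate in $\gamma$. Chebyshev's inequality then yields, for every $\lambda>0$ and $\gamma>0$,
\[P(|X|>\lambda) \leq 2e^{-\gamma\lambda+c\gamma^2\|a\|_{\ell^2}^2},\]
and optimizing in $\gamma$ (take $\gamma = \lambda/(2c\|a\|_{\ell^2}^2)$) gives the sub-Gaussian tail
\[P(|X|>\lambda) \leq 2\exp\!\Bigl(-\tfrac{\lambda^2}{4c\|a\|_{\ell^2}^2}\Bigr).\]

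Finally, I would convert this tail bound to an $L^p(\Omega)$ estimate via
\[\|X\|_{L^p(\Omega)}^p = p\int_0^\infty \lambda^{p-1} P(|X|>\lambda)\,d\lambda \leq 2p \int_0^\infty \lambda^{p-1} e^{-\lambda^2/(4c\|a\|_{\ell^2}^2)}\,d\lambda = p\,(4c\|a\|_{\ell^2}^2)^{p/2}\,\Gamma\!\bigl(\tfrac{p}{2}\bigr),\]
and Stirling's formula $\Gamma(p/2)^{1/p}\lesssim \sqrt{p}$ produces the desired $\|X\|_{L^p(\Omega)}\leq C\sqrt{p}\,\|a\|_{\ell^2}$. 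Applying this to the reduced sum and restoring the original coefficients gives \eqref{proba}. I expect no serious obstacle here; the only point requiring some care is the reduction step verifying that the coefficient $\ell^2$-norm after passing to the independent $\Re g_n$, $\Im g_n$ is comparable to $\|c_n\|_{\ell^2_n(\Z^d)}$, which follows directly from $|c_n|^2 = (\Re c_n)^2 + (\Im c_n)^2$ and the $\pm n$ pairing.
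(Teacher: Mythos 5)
Your proof is correct and follows essentially the same route as the paper: the paper performs the identical reduction to the independent real variables $g_0$, $\Re g_n$, $\Im g_n$ via the Hermitian symmetry and then simply cites Lemma 3.1 of \cite{BTI} for the sub-Gaussian $L^p$ bound, which is exactly the Chernoff-plus-tail-integration argument you spell out. The only difference is that you prove that cited lemma rather than quoting it, and your details (MGF product bound, optimization in $\gamma$, Gamma-function computation with Stirling) are all sound.
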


\begin{proof}
As in \eqref{real_randomization}, we notice that
\[\sum_{n\in\Z^d}g_n c_n=g_0c_0+2{\rm Re}\, \sum_{n\in \mathcal I} g_n c_n
=g_0c_0+2\sum_{n\in \mathcal I} \left({\rm Re}\, g_n {\rm Re}\, c_n - {\rm Im}\, g_n {\rm Im}\, c_n\right).\]
Since  $g_0={\rm Re}\, g_0$, $\Re  g_n$, $\Im  g_n$, with $n\in \mathcal I$,
are mean zero, independent real random variables and their distributions satisfy \eqref{distrib}, it is sufficient to apply Lemma 3.1 from \cite{BTI}
to obtain the conclusion. 
\end{proof}


Secondly, recall that if $\phi\in H^s$,
then the randomization $\phi^\omega$
is in $H^s$ almost surely as long as \eqref{distrib} is satisfied.
More precisely, we have the following lemma, 
whose proof is based on Lemma \ref{basic_lemma}.
See \cite{BOP1} for the details.
\begin{lemma}\label{lemma:Hs}
Let $\{g_n\}_{n\in\Z^d}$
be a sequence of random variables satisfying the hypotheses of Lemma 
\ref{basic_lemma}.
Let
$\phi\in H^s(\R^d)$ be a real-valued function and let $\phi^\omega$ be its real-valued randomization
defined by
\[\phi^\omega:=\sum_{n\in\Z^d}g_n(\omega)\psi (D-n) \phi.\]

\noi
Then, we have that
\begin{align*}
P\big(\|\phi^\omega\|_{H^s_x(\R^d)}>\l\big)\leq C\exp \bigg(-\frac{c\l^2}{\|\phi\|_{H^s}^2}\bigg).
\end{align*}
\end{lemma}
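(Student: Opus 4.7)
The plan is to first establish a $\sqrt{p}$-type moment bound
\[\|\phi^\omega\|_{L^p(\Omega; H^s(\R^d))} \leq C\sqrt{p}\,\|\phi\|_{H^s(\R^d)}, \qquad p \geq 2, \tag{$\ast$}\]
and then convert it into the claimed Gaussian tail estimate by Chebyshev's inequality and an optimization in $p$.

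To obtain $(\ast)$, I would move to the Fourier side. Since $\widehat{\psi(D-n)\phi}(\xi) = \psi(\xi-n)\widehat\phi(\xi)$, we have
\[\widehat{\phi^\omega}(\xi) = \widehat\phi(\xi)\sum_{n\in\Z^d} g_n(\omega)\,\psi(\xi-n),\]
and hence
\[\|\phi^\omega\|_{H^s_x}^2 = \int_{\R^d} \jb{\xi}^{2s}|\widehat\phi(\xi)|^2 \Big|\sum_{n}g_n(\omega)\psi(\xi-n)\Big|^2\,d\xi.\]
Since $p\geq 2$, Minkowski's integral inequality swaps the $L^p(\Omega)$ and $L^2_\xi$ norms in the right direction, giving
\[\|\phi^\omega\|_{L^p(\Omega; H^s_x)}^2 \leq \int_{\R^d} \jb{\xi}^{2s}|\widehat\phi(\xi)|^2\, \Big\|\sum_{n}g_n\psi(\xi-n)\Big\|_{L^p(\Omega)}^2 d\xi.\]
Applying Lemma \ref{basic_lemma} pointwise in $\xi$ to the deterministic sequence $\{\psi(\xi-n)\}_{n\in\Z^d}$ yields
\[\Big\|\sum_{n}g_n\psi(\xi-n)\Big\|_{L^p(\Omega)} \leq C\sqrt{p}\,\Big(\sum_{n}|\psi(\xi-n)|^2\Big)^{1/2} \leq C\sqrt{p},\]
where in the last step I use that $\supp\psi\subset[-1,1]^d$, so that at most a uniformly bounded number of translates $\psi(\xi-n)$ are nonzero for each $\xi$. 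Substituting back produces $(\ast)$.

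Having $(\ast)$, Chebyshev's inequality gives
\[P\big(\|\phi^\omega\|_{H^s_x} > \lambda\big) \leq \lambda^{-p}\,\|\phi^\omega\|_{L^p(\Omega; H^s_x)}^p \leq \Big(\frac{C\sqrt{p}\,\|\phi\|_{H^s}}{\lambda}\Big)^p\]
for every $p\geq 2$. Choosing $p \simeq c\lambda^2/\|\phi\|_{H^s}^2$ so that $C\sqrt{p}\,\|\phi\|_{H^s}/\lambda \leq e^{-1}$ optimizes the right-hand side and delivers the desired sub-Gaussian bound $C\exp(-c\lambda^2/\|\phi\|_{H^s}^2)$; for $\lambda$ so small that this choice forces $p<2$, the claimed tail bound is trivial after enlarging the constant $C$.

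The only mild technical point is that Lemma \ref{basic_lemma} is stated for sequences satisfying the pairing symmetry $c_{-n}=\overline{c_n}$, whereas $c_n := \psi(\xi-n)$ need not satisfy this at a generic $\xi$. This is easily handled by either splitting $\sum_n g_n \psi(\xi-n)$ into real and imaginary parts and invoking the underlying sub-Gaussian concentration inequality (Lemma 3.1 of \cite{BTI}) directly on each, or, alternatively, by pairing frequencies $\pm n$ and using $\psi(-\xi)=\psi(\xi)$ together with the reality of $\phi^\omega$. I do not foresee any substantive obstacle beyond this bookkeeping.
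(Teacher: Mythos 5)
Your argument is correct and follows essentially the same route as the proof the paper defers to (\cite{BOP1}): a $\sqrt{p}$ moment bound via Minkowski's integral inequality plus the Khintchine-type estimate of Lemma \ref{basic_lemma}, followed by Chebyshev and optimization in $p$ exactly as in the proof of Proposition \ref{proba_S}. The only difference is that you work on the Fourier side, where the coefficients $c_n=\psi(\xi-n)$ fail the symmetry $c_{-n}=\overline{c_n}$ and you must patch this as you indicate; the reference instead applies the lemma pointwise in $x$ to $c_n(x)=\jb{D}^s\psi(D-n)\phi(x)$, for which \eqref{psi} and the reality of $\phi$ make the symmetry automatic, so that wrinkle disappears.
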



Before continuing further, we briefly recall the definitions of the smooth projections from the Littlewood-Paley theory.
Let $\varphi$ be a smooth real-valued bump function supported on $\{\xi\in \R^d: |\xi|\leq 2\}$ and $\varphi\equiv 1$ on $\{\xi: |\xi|\leq 1\}$. 
If $N>1$ is a dyadic number, we define the smooth projection 
$\pP_{\leq N}$ onto frequencies
$\{|\xi| \leq N\}$ by
\[\widehat{\pP_{\leq N}f}(\xi):=\varphi\big(\tfrac{\xi}N\big)\widehat f(\xi).\]

\noindent
Similarly, we can define the smooth projection $\pP_{N}$ onto frequencies $\{|\xi|\sim  N\}$ by
\[\widehat{\pP_{N}f}(\xi):=\Big(\varphi\big(\tfrac{\xi}N\big)-\varphi\big(\tfrac{2\xi}N\big)\Big)\widehat f(\xi).\]
We make the convention that 
$\pP_{\leq 1}=\pP_1$. 
Bernstein's inequality states that
\begin{equation}\label{R3}
\|\pP_{\leq N} f\|_{L^q(\R^d)} \lesssim N^{\frac{d}{p}-\frac{d}{q}}\|\pP_{\leq N} f\|_{L^p(\R^d)}, \quad
1\leq p \leq q \leq \infty.
\end{equation}

\noindent
The same inequality holds if we replace 
$\pP_{\leq N}$ by $\pP_N$. As an immediate corollary, we have
\begin{equation}\label{R4}
\|\psi(D -n) \phi\|_{L^q(\R^d)} \lesssim \|\psi(D-n)  \phi \|_{L^p(\R^d)}, \qquad
1\leq p \leq q \leq \infty,
\end{equation}

\noindent
for all $n \in \Z^d$.
This follows by applying \eqref{R3}
to $\phi_n(x) := e^{2\pi i n\cdot x} \psi(D-n) \phi(x)$
and noting that $\supp \widehat{\phi}_n \subset [-1, 1]^d$ .
The point of \eqref{R4} is that once a function is (roughly) restricted to a cube
in the Fourier space, we do not lose any regularity to go from the $L^q$-norm  to the $L^p$-norm, $q \geq p$.


\begin{proposition}[Improved local-in-time Strichartz estimates]\label{proba_S}
Let $d\geq 1$, $u_0$, $u_1$ two real-valued functions defined on $\R^d$,
and let $(u_0^{\omega}, u_1^\omega)$ be their randomization defined in \eqref{R1}, satisfying \eqref{cond}. Let $I=[a,b]\subset \R$ be a compact time interval.

\noindent
{\rm(i)} If $u_0\in L^2(\R^d)$ and $u_1\in\dot{H}^{-1}(\R^d)$,
then given $1\leq q<\infty$ and $2\leq r<\infty$, there exist $C,c>0$ such that
\begin{align*}
P\left(\left\|S(t)(u_0^\omega,u_1^\omega)\right\|_{L^q_t(I; L^r_x)}>\l\right)\leq C\exp\left(-c\frac{\l^2}{|I|^{\frac2q} \left(\|u_0\|_{L^2}+\|u_1\|_{\dot{H}^{-1}}\right)^2}\right).
\end{align*}

\noindent
{\rm (ii)} If $u_0\in L^2(\R^d)$ and $u_1\in H^{-1}(\R^d)$,
 then given $1\leq q< \infty$, $2\leq r<\infty$,
there exist $C,c>0$ such that
\begin{align*}
P\left(\left\|S(t)(u_0^\omega,u_1^\omega)\right\|_{L^q_t(I; L^r_x)}>\l\right)\leq C\exp\left(-c\frac{\l^2}{\max\left(1, |a|^2, |b|^2\right)|I|^{\frac2q}\left(\|u_0\|_{L^2}+\|u_1\|_{H^{-1}}\right)^2}\right).
\end{align*}

\noindent
{\rm (iii)}
If $u_0\in H^s(\R^d)$ and $u_1\in H^{s-1}(\R^d)$ for some $0<s\leq 1$,
then given $1\leq q<\infty$, there exist $C,c>0$ such that
\begin{align*}
P\left(\left\|S(t)(u_0^\omega,u_1^\omega)\right\|_{L^q_t(I; L^\infty_x)}>\l\right)\leq C\exp\left(-c\frac{\l^2}{\max\left(1, |a|^2, |b|^2\right)|I|^{\frac 2q} (\|u_0\|_{H^s}+\|u_1\|_{H^{s-1}})^2}\right).
\end{align*}
\end{proposition}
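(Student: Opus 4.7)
The plan is to estimate the $L^p(\Omega)$-moments of the relevant space-time norms and then convert to tail bounds by Chebyshev's inequality and optimization in $p$. By linearity,
\[
S(t)(u_0^\omega, u_1^\omega) = \sum_{n\in\Z^d} g_{n,0}(\omega)\, \cos(t|\nabla|)\, \psi(D-n)u_0 + \sum_{n\in\Z^d} g_{n,1}(\omega)\, \tfrac{\sin(t|\nabla|)}{|\nabla|}\,\psi(D-n) u_1,
\]
so at each fixed $(t,x)$ this is a random series of the type covered by Lemma \ref{basic_lemma}, yielding $\sqrt{p}$-growth in the $L^p_\omega$-norm. The essential gain over deterministic estimates is that each summand is frequency-localized in a unit translate of $Q_0$, so Bernstein's inequality \eqref{R4} upgrades $L^2_x$-bounds to $L^r_x$-bounds for any $r \geq 2$ without loss of derivatives.

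For parts (i) and (ii) I would first use Minkowski's inequality (valid for $p \geq \max(q,r)$) to bring $L^p_\omega$ inside the $L^q_tL^r_x$ norm, then apply Lemma \ref{basic_lemma} pointwise in $(t,x)$ to get
\[
\|S(t)(u_0^\omega,u_1^\omega)\|_{L^p_\omega} \lesssim \sqrt{p}\,\Big(\sum_n \big|\cos(t|\nabla|)\psi(D-n)u_0\big|^2 + \big|\tfrac{\sin(t|\nabla|)}{|\nabla|}\psi(D-n)u_1\big|^2\Big)^{\!1/2}.
\]
Since $q,r \geq 2$, a second Minkowski step pulls the $\ell^2_n$-sum outside $L^q_tL^r_x$. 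For each $n$, the function $\cos(t|\nabla|)\psi(D-n)u_0(t,\cdot)$ has Fourier support in $n + [-1,1]^d$, so \eqref{R4} gives $\|\cos(t|\nabla|)\psi(D-n)u_0(t,\cdot)\|_{L^r_x} \lesssim \|\psi(D-n)u_0\|_{L^2_x}$ uniformly in $t$. The analogous estimate for the sine piece rests on the elementary bound $\big|\tfrac{\sin(t|\xi|)}{|\xi|}\big|\leq \min(|t|,|\xi|^{-1})$: in case (i) the factor $|\xi|^{-1}$ produces $\|u_1\|_{\dot H^{-1}}$, while in case (ii) the bound $\min(|t|,|\xi|^{-1})\lesssim \max(1,|t|)\,\langle\xi\rangle^{-1}$ produces the prefactor $\max(1,|a|,|b|)$ together with $\|u_1\|_{H^{-1}}$. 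Integrating in $t\in I$ supplies $|I|^{1/q}$, and bounded-overlap almost orthogonality of $\{\psi(\cdot-n)\}$ gives $\sum_n \|\psi(D-n)u_0\|_{L^2_x}^2 \lesssim \|u_0\|_{L^2}^2$, and analogously for $u_1$.

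The main obstacle is part (iii), where the endpoint $r=\infty$ breaks the second Minkowski swap above. The fix, and the only place where the hypothesis $s>0$ enters, is Sobolev embedding $W^{s,r_0}(\R^d)\hookrightarrow L^\infty(\R^d)$ for some finite $r_0=r_0(s,d)$ with $sr_0>d$ and $r_0\geq 2$; this reduces the problem to bounding $\|\langle\nabla\rangle^s S(t)(u_0^\omega,u_1^\omega)\|_{L^q_t L^{r_0}_x}$, to which the argument of (i)--(ii) applies. The price is a weight $\langle n\rangle^s$ on each frequency-localized piece, and the weighted sums
\[
\sum_n \langle n\rangle^{2s}\|\psi(D-n)u_0\|_{L^2_x}^2 \lesssim \|u_0\|_{H^s}^2, \qquad \sum_n \langle n\rangle^{2s}\|\psi(D-n)u_1\|_{H^{-1}_x}^2 \lesssim \|u_1\|_{H^{s-1}}^2
\]
produce exactly the claimed data norms. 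In each of the three cases the outcome is $\|S(t)(u_0^\omega,u_1^\omega)\|_{L^p_\omega L^q_t L^r_x}\leq C\sqrt{p}\,K\,M$ for an appropriate constant $K$ (containing $|I|^{1/q}$ and, in (ii)--(iii), $\max(1,|a|,|b|)$) and data norm $M$; Chebyshev's inequality yields $P(\|\cdot\|_{L^q_tL^r_x}>\lambda)\leq (C\sqrt{p}\,KM/\lambda)^p$, and the standard optimization $p\sim (\lambda/(KM))^2$ (with the small-$\lambda$ regime absorbed into $C$) delivers the claimed Gaussian tail bound.
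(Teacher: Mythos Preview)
Your proposal is correct and follows essentially the same approach as the paper: Minkowski to bring $L^p_\omega$ inside, Lemma~\ref{basic_lemma} pointwise, Minkowski for $\ell^2_n$ and $L^r_x$, Bernstein \eqref{R4}, then Chebyshev plus optimization in $p$; in (ii) the paper handles low frequencies by splitting off the $n=0$ term and using $|\sin(t|\xi|)/(t|\xi|)|\leq 1$, which is equivalent to your uniform bound $\min(|t|,|\xi|^{-1})\lesssim \max(1,|t|)\langle\xi\rangle^{-1}$, and (iii) is reduced to (ii) via Sobolev embedding exactly as you indicate. One small slip: you write ``since $q,r\geq 2$'' for the $\ell^2_n$ swap, but the hypothesis only gives $1\leq q<\infty$; the paper (and your own subsequent description) only swaps $\ell^2_n$ with $L^r_x$ and then uses that after Bernstein and unitarity the resulting quantity is constant in $t$, so no Minkowski in the $t$-variable is needed.
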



\begin{proof}
(i). Let $1\leq q<\infty$, $2\leq r<\infty$, and $p\geq\max (q,r)$. Then, 
using Minkowski's integral inequality, \eqref{proba}, and Bernstein's inequality \eqref{R4}, we have that
\begin{align*}
\Big(\E \big\|S(t)(&u_0^\omega,u_1^\omega)\big\|^p_{L^q_t(I; L^r_x)}\Big)^{1/p}
\leq \left\|\left\|S(t)(u_0^\omega,u_1^\omega)\right\|_{L^p(\Omega)}\right\|_{L^q_IL^r_x}\\
&\lesssim \sqrt{p}  \left\|\left\|\psi (D-n)\cos (t|\nabla|)u_0\right\|_{\ell^2_n}\right\|_{L^q_IL^r_x}
+\sqrt{p}  \left\|\left\|\psi (D-n)\frac{\sin (t|\nabla|)}{|\nabla|}u_1\right\|_{\ell^2_n}\right\|_{L^q_IL^r_x}\\
&\lesssim \sqrt{p}  \left\|\left\|\psi (D-n)\cos (t|\nabla|)u_0\right\|_{L^r_x}\right\|_{L^q_I\ell^2_n}
+\sqrt{p}  \left\|\left\|\psi (D-n)\frac{\sin (t|\nabla|)}{|\nabla |}u_1\right\|_{L^{r}_x}\right\|_{L^q_I\ell^2_n}\\
&\lesssim \sqrt{p}  \left\|\left\|\psi (D-n)\cos (t|\nabla|)u_0\right\|_{L^2_x}\right\|_{L^q_I\ell^2_n}
+\sqrt{p}  \left\|\left\|\psi (D-n)\frac{\sin (t|\nabla|)}{|\nabla |}u_1\right\|_{L^2_x}\right\|_{L^q_I\ell^2_n}\\
&\lesssim \sqrt{p}  \left\|\left\|\psi (D-n)u_0\right\|_{L^2_x}\right\|_{L^q_I\ell^2_n}
+\sqrt{p}  \left\|\left\|\psi (D-n)|\nabla|^{-1}u_1\right\|_{L^2_x}\right\|_{L^q_I\ell^2_n}\\
&\lesssim \sqrt{p}|I|^{\frac 1q}\left(\|u_0\|_{L^2}+\|u_1\|_{\dot{H}^{-1}}\right).
\end{align*}

\noindent
Then, by Chebyshev's inequality we have that
\[P\left( \left\|S(t)(u_0^\omega,u_1^\omega)\right\|_{L^q_t(I; L^r_x)}>\l\right)
<\left(\frac{C|I|^{\frac 1q}p^{\frac 12}\left(\|u_0\|_{L^2}+\|u_1\|_{\dot{H}^{-1}}\right)}{\l}\right)^p\]
for $p\geq \max (q,r)$.

Let $p:=\left(\frac{\l}{C|I|^\frac 1q e\left(\|u_0\|_{L^2}+\|u_1\|_{\dot{H}^{-1}}\right)}\right)^2$.
If $p\geq \max (q,r)$, we have
\begin{align*}
P\left( \left\|S(t)(u_0^\omega,u_1^\omega)\right\|_{L^q_t(I; L^r_x)}>\l\right)
&<\left(\frac{C|I|^{\frac 1q}p^{\frac 12}\left(\|u_0\|_{L^2}+\|u_1\|_{\dot{H}^{-1}}\right)}{\l}\right)^p\\
&=e^{-p}=\exp\left(-c\frac{\l^2}{|I|^{\frac2q} \left(\|u_0\|_{L^2}+\|u_1\|_{\dot{H}^{-1}}\right)^2}\right).
\end{align*}
Otherwise, if $p=\left(\frac{\l}{C|I|^\frac 1q e\left(\|u_0\|_{L^2}+\|u_1\|_{\dot{H}^{-1}}\right)}\right)^2\leq \max (q,r)$, we choose $C$ such that $C e^{-\max (q,r)}\geq 1$.
We then have
\begin{align*}
P\left(\left\|S(t)\left(u_0^\omega,u_1^\omega\right)\right\|_{L^q_t(I; L^r_x)}>\l\right)
&\leq 1\leq Ce^{-\max (q,r)}\leq Ce^{-p}\\
&=C\exp\left(-c\frac{\l^2}{|I|^{\frac2q} \left(\|u_0\|_{L^2}+\|u_1\|_{\dot{H}^{-1}}\right)^2}\right).
\end{align*}


\noindent
(ii). We pay a particular attention to low frequencies since, for $u_1\in H^{-1}(\R^d)$ and $n=0$, $\psi(D-n)|\nabla|^{-1}u_1$ is not in $L^2(\R^d)$. 
For $|n|\geq 1$, we argue as in part (i).
Using  Minkowski's integral inequality, \eqref{proba}, Bernstein's inequality \eqref{R4}, and the fact that $\left|\frac{\sin (t|\xi|)}{t|\xi|}\right|\leq 1$ for all $\xi\neq 0$, we obtain
for $p\geq \max (q,r)$ that
\begin{align*}
\bigg(\E \bigg\|\frac{\sin (t|\nabla|)}{|\nabla|}u_1^\omega&\bigg\|^p_{L^q_t(I; L^r_x)} \bigg) ^{1/p}
\leq\left\| \left\|\frac{\sin (t|\nabla|)}{|\nabla|}u_1^\omega\right\|_{L^p(\Omega)}\right\|_{L^q_IL^r_x}\\
&\lesssim \sqrt{p} \left\|\left\|\psi(D-n)\frac{\sin (t|\nabla|)}{|\nabla|}u_1\right\|_{\ell^2_n}\right\|_{L^q_IL^r_x}\\
&\lesssim \sqrt{p} \left\|\left\|\psi(D-n)\frac{\sin (t|\nabla|)}{|\nabla|}u_1\right\|_{L^r_x}\right\|_{L^q_I\ell^2_n}\\
&\lesssim \sqrt{p} \left\|\left\|\psi(D-n)\frac{\sin (t|\nabla|)}{|\nabla|}u_1\right\|_{L^2_x}\right\|_{L^q_I\ell^2_n}\\
&\lesssim \sqrt{p}|I|^{\frac 1q}\left(\sum_{|n|\geq 1}\left\|\psi(D-n)|\nabla|^{-1}u_1\right\|_{L^2_x}^2+ \sup_{t\in I}\left\|\psi (D)\frac{\sin (t|\nabla|)}{t|\nabla|}t u_1\right\|_{L^{2}_x}^2\right)^{\frac 12}\\
&\lesssim \sqrt{p}|I|^{\frac 1q}\left(\sum_{|n|\geq 1}\left\|\psi(D-n)|\nabla|^{-1}u_1\right\|_{L^2_x}^2+ \max\left(|a|^2, |b|^2\right)
\left\|\psi (D)u_1\right\|_{L^{2}_x}^2\right)^{\frac 12}\\
&\lesssim \sqrt{p}\max\left(1, |a|, |b|\right) |I|^{\frac 1q}\|u_1\|_{H^{-1}}.
\end{align*}
Arguing as in part (i)
for $\cos (t|\nabla|)u_0^\omega$,
the conclusion of (ii) then follows.


\noindent
(iii). Take $r\gg 1$ such that $s r>d$. Then, $W^{s, r}(\R^d)\subset L^{\infty}(\R^d)$ and thus
\[\left\|S(t)\left(u_0^\omega,u_1^\omega\right)\right\|_{L^q_t(I; L^\infty_x)}\lesssim \left\|S(t)\left(\jb{\nabla}^s u_0^\omega,\jb{\nabla}^s u_1^\omega\right)\right\|_{L^q_t(I; L^r_x)}.\]
Applying (ii) with $(q,r)$, the conclusion of (iii) follows.
\end{proof}


\begin{corollary}\label{cor:proba}
Let $d\geq 1$, $u_0\in L^2(\R^d)$, $u_1\in H^{-1}(\R^d)$, and $(u_0^{\omega}, u_1^\omega)$ their randomization defined in \eqref{R1}, satisfying \eqref{cond}. Then, given $1\leq q<\infty$, $2\leq r<\infty$, $0<\gamma<\frac1q$, and $I\subset [a,b]$
a compact time interval, the following holds
\begin{align}\label{improved_Strichartz}
\left\|S(t)\left(u_0^\omega,u_1^\omega\right)\right\|_{L^q_t(I; L^r_x)}\leq |I|^\gamma \left(\|u_0\|_{L^2}+\|u_1\|_{H^{-1}}\right)
\end{align}
outside a set of probability at most $C\exp\left(-\frac{c|I|^{2(\gamma-\frac1q)}}{\max(1,|a|^2, |b|^2)}\right)$.
\end{corollary}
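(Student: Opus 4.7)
The plan is to derive this corollary as a direct application of Proposition \ref{proba_S}(ii), which is the key probabilistic Strichartz estimate for the pair $(u_0, u_1) \in L^2 \times H^{-1}$. The idea is simply to choose the threshold $\lambda$ in the tail bound so that it coincides with the right-hand side of \eqref{improved_Strichartz}.

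More precisely, I would set
\[
\lambda := |I|^{\gamma}\bigl(\|u_0\|_{L^2} + \|u_1\|_{H^{-1}}\bigr)
\]
and invoke Proposition \ref{proba_S}(ii) with this value of $\lambda$. Substituting into the exponential bound, the factor $\lambda^{2}$ cancels the $(\|u_0\|_{L^2} + \|u_1\|_{H^{-1}})^{2}$ in the denominator, and we are left with
\[
P\bigl(\|S(t)(u_0^{\omega}, u_1^{\omega})\|_{L^q_t(I; L^r_x)} > \lambda \bigr)
\leq C \exp\!\left( -c\,\frac{|I|^{2\gamma}}{\max(1, |a|^2, |b|^2)\, |I|^{2/q}} \right)
= C \exp\!\left( -\frac{c\, |I|^{2(\gamma - 1/q)}}{\max(1, |a|^2, |b|^2)} \right),
\]
which is exactly the stated bound.

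There is no genuine obstacle here; the only thing to check is that the hypothesis $0 < \gamma < 1/q$ ensures $2(\gamma - 1/q) < 0$, so that the exponent $|I|^{2(\gamma - 1/q)}$ becomes large as $|I| \to 0$, making the probability of the bad set genuinely small on short time intervals. This is what makes the estimate useful: one trades a factor of $|I|^{1/q - \gamma}$ of regularity gain against an exponentially small exceptional set. Since $u_0 \in L^2$ and $u_1 \in H^{-1}$ are the precise hypotheses of Proposition \ref{proba_S}(ii), and since $1 \leq q < \infty$ and $2 \leq r < \infty$ are exactly the admissible ranges there, nothing else needs to be verified.
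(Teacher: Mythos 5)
Your proposal is correct and is exactly the paper's own argument: the paper's proof consists of the single line "take $\l=|I|^\gamma(\|u_0\|_{L^2}+\|u_1\|_{H^{-1}})$ in Proposition \ref{proba_S} (ii)," and your substitution and simplification of the exponent match this verbatim.
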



\begin{proof}
The conclusion is obtained by taking $\l=|I|^\gamma \left(\|u_0\|_{L^2}+\|u_1\|_{H^{-1}}\right)$ in Proposition \ref{proba_S} (ii).
\end{proof}


We conclude this section with some improved
global-in-time Strichartz estimates.

\begin{proposition}[Improved global-in-time Strichartz estimates]
\label{globalS}
Let $d=4$ or $5$ and $\frac{(d+1)(d-2)}{(d-1)(d+2)}\leq s\leq 1$. 
Let $u_0\in \dot{H}^s(\R^d)$ and $u_1\in \dot{H}^{s-1}(\R^d)$,
and let $(u_0^{\omega}, u_1^\omega)$ be their randomization defined in \eqref{R1}, satisfying \eqref{cond}.

Then, given $\frac{2(d+2)}{d-2}\leq r<\infty$, there exist $C,c>0$ such that 
\begin{align*}
P\Bigg(\big\|S(t)(u_0^\omega,u_1^\omega)\big\|_{L^{\frac{d+2}{d-2}}_t\left(\R:L^r_x(\R^d)\right)}>\l\Bigg)
\leq C\exp\Bigg(-c\frac{\l^2}{(\|u_0\|_{\dot{H}^s}+\|u_1\|_{\dot{H}^{s-1}})^2}\Bigg).
\end{align*}

\end{proposition}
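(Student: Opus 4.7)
I would mimic the proof of Proposition \ref{proba_S} but replace the local-in-time Strichartz estimate by a global-in-time deterministic Strichartz estimate at the $\dot H^s$ level applied frequency-cube-by-frequency-cube. Let $p \geq \max(r, \tfrac{d+2}{d-2})$ to be optimized later. By Minkowski's integral inequality to bring $L^p(\Omega)$ inside $L^{(d+2)/(d-2)}_t L^r_x$, followed by Lemma \ref{basic_lemma} applied fiberwise, one obtains
\begin{align*}
\big\| S(t)(u_0^\omega,u_1^\omega) \big\|_{L^p_\omega L^{(d+2)/(d-2)}_t L^r_x}
&\lesssim \sqrt{p}\,\Big\| \big\| \psi(D-n)S(t)(u_0,u_1)\big\|_{\ell^2_n}\Big\|_{L^{(d+2)/(d-2)}_t L^r_x}.
\end{align*}
Since $r, \tfrac{d+2}{d-2}\geq 2$, a second Minkowski exchange moves $\ell^2_n$ to the outermost position.

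\textbf{Reduction to a single frequency cube.} Set $r_0 := \tfrac{2d(d+2)}{d^2+4-2s(d+2)}$. The key arithmetic check is that $\big(\tfrac{d+2}{d-2}, r_0\big)$ is $\dot H^s$-wave admissible: the scaling identity $\tfrac{d-2}{d+2}+\tfrac{d}{r_0} = \tfrac{d}{2}-s$ holds by definition of $r_0$, and a direct calculation shows that the admissibility condition $\tfrac{d-2}{d+2}+\tfrac{d-1}{2r_0}\leq \tfrac{d-1}{4}$ is equivalent to $s\geq \tfrac{(d-2)(d+1)}{(d-1)(d+2)}$, which is precisely the hypothesis. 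Moreover $r_0\leq \tfrac{2(d+2)}{d-2}\leq r$, so Bernstein's inequality \eqref{R4} gives, for each $n$,
\begin{align*}
\big\| \psi(D-n) S(t)(u_0,u_1)\big\|_{L^r_x} \lesssim \big\| \psi(D-n) S(t)(u_0,u_1)\big\|_{L^{r_0}_x}.
\end{align*}

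\textbf{Deterministic Strichartz and almost-orthogonality.} Because $\psi(D-n)$ commutes with $S(t)$, Proposition \ref{prop:Strichartz} applied with $I=\R$, $\gamma=s$, and admissible pair $(\tfrac{d+2}{d-2}, r_0)$ yields
\begin{align*}
\big\|\psi(D-n) S(t)(u_0,u_1)\big\|_{L^{(d+2)/(d-2)}_t L^{r_0}_x(\R\times\R^d)}
\lesssim \|\psi(D-n)u_0\|_{\dot H^s}+\|\psi(D-n)u_1\|_{\dot H^{s-1}}.
\end{align*}
Squaring in $n$ and summing, Plancherel together with the bounded-overlap property $\sum_n |\psi(\xi-n)|^2\sim 1$ gives $\big(\sum_n \|\psi(D-n)u_j\|_{\dot H^{s-j}}^2\big)^{1/2} \lesssim \|u_j\|_{\dot H^{s-j}}$ for $j=0,1$. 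Combining all of the above,
\begin{align*}
\big\| S(t)(u_0^\omega,u_1^\omega) \big\|_{L^p_\omega L^{(d+2)/(d-2)}_t L^r_x}
\lesssim \sqrt{p}\,\big(\|u_0\|_{\dot H^s}+\|u_1\|_{\dot H^{s-1}}\big).
\end{align*}

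\textbf{Tail bound.} Chebyshev's inequality together with the optimization $p\sim \lambda^2/(\|u_0\|_{\dot H^s}+\|u_1\|_{\dot H^{s-1}})^2$, exactly as at the end of Proposition \ref{proba_S}(i), converts this moment bound into the stated Gaussian-type tail bound (with the case of small $p$ handled by adjusting the constant $C$). The principal obstacle — and the reason for the lower bound on $s$ — is the admissibility computation isolating $r_0$: outside that range of $s$ no $\dot H^s$-admissible pair has time exponent $\tfrac{d+2}{d-2}$ and spatial exponent at most $r$, so the Bernstein+Strichartz combination above breaks down. Everything else is a routine globalization of the arguments already used for Proposition \ref{proba_S}.
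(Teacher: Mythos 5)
Your proposal is correct and follows essentially the same route as the paper: Minkowski plus Lemma \ref{basic_lemma}, Bernstein's inequality \eqref{R4} to drop from $L^r_x$ to an $\dot H^s$-admissible exponent (your $r_0$ is exactly the paper's $\tilde r$ determined by the scaling relation, with the same admissibility check pinning down the lower bound on $s$), the deterministic global Strichartz estimate cube-by-cube, almost-orthogonality in $n$, and the Chebyshev optimization in $p$. No gaps.
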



\begin{proof}
As in the proof of Proposition \ref{proba_S} (i),
we have for $p\geq r$ that
\begin{align*}
\Bigg(\E \big\|S(t)(&u_0^\omega,u_1^\omega)\big\|^p_{L^{\frac{d+2}{d-2}}_t\left(\R;L^r_x(\R^d)\right)}\Bigg)^{1/p}
\leq \Big\|\big\|S(t)(u_0^\omega,u_1^\omega)\big\|_{L^p(\Omega)}\Big\|_{L^{\frac{d+2}{d-2}}_tL^r_x}\\
&\lesssim \sqrt{p} \Big\|\|\psi(D-n)\cos (t|\nabla|)u_0\|_{\ell^2_n}\Big\|_{L^{\frac{d+2}{d-2}}_tL^r_x}
+\Big\|\Big\|\psi(D-n)\frac{\sin (t|\nabla|)}{|\nabla|}u_1\big\|_{\ell^2_n}\Big\|_{L^{\frac{d+2}{d-2}}_tL^r_x} \\
&\lesssim \sqrt{p} \Big\|\|\psi(D-n)\cos (t|\nabla|)u_0\|_{L^{\frac{d+2}{d-2}}_tL^r_x}\Big\|_{\ell^2_n}
+\sqrt{p} \Big\|\Big\|\psi(D-n)\frac{\sin (t|\nabla|)}{|\nabla|}u_1\Big\|_{L^{\frac{d+2}{d-2}}_tL^r_x}\Big\|_{\ell^2_n}.
\end{align*}
Then, we choose $\tilde{r}\leq \frac{2(d+2)}{d-2}$
such that $(\frac{d+2}{d-2},\tilde{r})$ is $\dot{H}^s(\R^d)$-wave admissible.
Note that the condition $\frac{d-2}{d+2}+\frac{d}{\tilde{r}}=\frac d2-s$ together with 
$\frac{(d+1)(d-2)}{(d-1)(d+2)}\leq s\leq 1$
yields that 
\[\frac{2(d-1)(d+2)}{d^2-3d+6}\leq\tilde r \leq \frac{2(d+2)}{d-2}.\]
At its turn, this shows that 
$\frac{d-2}{d+2}+\frac{d-1}{2\tilde{r}}\leq \frac{d-1}{4}$, and thus it is indeed possible to choose $\big(\frac{d+2}{d-2},\tilde r\big)$, with $\tilde r\leq \frac{2(d+2)}{d-2}$, to be $\dot{H}^s(\R^d)$-wave admissible.
Then, by Bernstein's inequality \eqref{R4} and Strichartz estimates 
\eqref{Strichartz},
we have that
\begin{align*}
\bigg(\E \big\|S(t)(&u_0^\omega,u_1^\omega)\big\|^p_{L^{\frac{d+2}{d-2}}_t\left(\R; L^r_x(\R^d)\right)}\bigg)^{1/p}
\lesssim \sqrt{p} \left\|\left\|\psi(D-n)\cos(t|\nabla|)u_0\right\|_{L^{\frac{d+2}{d-2}}_tL^{\tilde{r}}_x}\right\|_{\ell^2_n}\\
&+ \left\|\left\|\psi(D-n)\frac{\sin(t|\nabla|)}{|\nabla|}u_1\right\|_{L^{\frac{d+2}{d-2}}_tL^{\tilde{r}}_x}\right\|_{\ell^2_n}\\
&\lesssim \sqrt{p} \left\|\left\|\psi(D-n)u_0\right\|_{\dot{H}^s}\right\|_{\ell^2_n}+\left\|\left\|\psi(D-n)u_1\right\|_{\dot{H}^{s-1}}\right\|_{\ell^2_n}\\
&\lesssim \sqrt{p}\big( \|u_0\|_{\dot{H}^s}+\|u_1\|_{\dot{H}^{s-1}}\big).
\end{align*}
The conclusion then follows
as in the proof of Proposition \ref{proba_S} (i).
\end{proof}


\section{Almost sure local well-posedness.
Probabilistic small data global theory}

In this section we prove Theorem \ref{ASLWP}
concerning the local well-posedness of the energy-critical defocusing NLW \eqref{NLW} on $\R^d$, $d=4$ and $5$,
and Theorem \ref{scattering} concerning the small data global theory.

\begin{proof}[Proof of Theorem \ref{ASLWP}]
Since the linear part of the solution $z^\omega:=S(t)(u_0^\omega, u_1^\omega)$
is well defined for all times,
it suffices to prove almost sure 
local existence and uniqueness
for the equation \eqref{v} satisfied by the nonlinear part
$v^\omega:=u^\omega-z^\omega$. 
Also, by the time reversibility of \eqref{v},
it is sufficient to work with $t\geq 0$.

Let $0<T\leq 1$ to be fixed later and $0<\theta<\frac{d-2}{d+2}$. By the improved local-in-time Strichartz estimates 
\eqref{improved_Strichartz}, there exists a set $\Omega_T$
such that for all $\omega\in \Omega_T$
we have
\begin{equation}\label{z_Omega_T}
\|z^\omega\|_{X([-T,T]\times\R^d)}\leq
(2T)^{\theta}\left(\|u_0\|_{L^2}+\|u_1\|_{H^{-1}}\right)
\end{equation}

\noindent
and
\[P(\Omega_T^c)\leq C\exp\left(-cT^{2\left(\theta-\frac{d-2}{d+2}\right)}\right)=C\exp \left(-\frac{c}{T^\gamma}\right),\]
with $\gamma=2\big(\frac{d-2}{d+2}-\theta\big)>0$.

By Duhamel's formula \eqref{Duhamel},
we have that $v^\omega$ is a solution of \eqref{v} if and only if it satisfies
\begin{equation}\label{Duhamel_v}
v^\omega(t)=-\int_0^t \frac{\sin ((t-t')|\nabla|)}{|\nabla|}F(v^\omega+z^\omega)(t')dt'.
\end{equation}

\noindent
We define $\Gamma^\omega$ by
\begin{align}\label{Gammav}
\Gamma ^\omega v (t):=-\int_0^t \frac{\sin((t-t')|\nabla|)}{|\nabla|}F(v+z^\omega)(t')dt'.
\end{align}
We prove that, for all $\omega\in\Omega_T$, $\Gamma^\omega$ is a contraction on the ball
\begin{align*}
B_a:=\left\{v \in C\big([0,T];\dot{H}^1(\R^d)\big)\cap X\big([0,T]\times\R^d\big): \|v\|_{L^{\infty}_t([0,T]; \dot{H}^1_x(\R^d))}+\|v\|_{X([0,T]\times\R^d)}\leq a\right\},
\end{align*}
where $a$ is to be chosen later. 
By the Banach fixed point theorem,
this shows that the equation
$\Gamma^\omega v=v$, and therefore equation \eqref{v},
has a unique solution $v^\omega$ in $B_a$.

We first prove that $\Gamma^\omega$
maps $B_a$ into itself.
Using the Strichartz estimates \eqref{Strichartz} and \eqref{z_Omega_T}, 
we obtain for $v\in B_a$ and $\omega\in \Omega_T$ that
\begin{align}\label{Gamma}
\|\Gamma^\omega v\|_{L^{\infty}_t([0,T]; \dot{H}^1_x(\R^d))}&
+\|\Gamma^\omega v\|_{X([0,T]\times \R^d)}\leq C\|F(v+z^\omega)\|_{L^1_TL^2_x}\notag\\
&\leq C_1 \|v\|_{X([0,T]\times\R^d)}^{\frac{d+2}{d-2}}+C_1\|z^\omega\|_{X([0,T]\times\R^d)}^{\frac{d+2}{d-2}}\notag\\
&\leq C_1a^{\frac{d+2}{d-2}} +C_1(2T)^{\frac{(d+2)\theta}{d-2}}(\|u_0\|_{L^2}+\|u_1\|_{H^{-1}})^{\frac{d+2}{d-2}}.
\end{align}

\noindent
Taking $a$ such that $C_1a^{\frac{4}{d-2}}<\frac{1}{2}$ and $T$ sufficiently small, we obtain that
\[\|\Gamma^\omega v\|_{L^{\infty}_t([0,T]; \dot{H}^1_x(\R^d))}+\|\Gamma^\omega v\|_{X([0,T]\times\R^d)}\leq a,\]
and thus $\Gamma^\omega$ maps the ball $B_a$ into itself for all $\omega\in\Omega_T$.
Similarly, we have for $v_1, v_2 \in B_a$ and $\omega\in\Omega_T$ that
\begin{align}\label{diff_Gamma}
\|\Gamma^\omega v_1-\Gamma^\omega v_2&\|_{L^\infty_t([0,T],\dot{H}^1_x(\R^d))}+\|\Gamma^\omega v_1-\Gamma^\omega v_2\|_{X([0,T]\times\R^d)}\notag\\
&\leq C_2 \left(\|v_1\|_{X([0,T]\times\R^d)}^{\frac{4}{d-2}}+\|v_2\|_{X([0,T]\times\R^d)}^{\frac{4}{d-2}}+(2T)^{\frac{4\theta}{d-2}}(\|u_0\|_{L^2}+\|u_1\|_{\dot{H}^{-1}})^{\frac{4}{d-2}}\right)\notag\\
&\hphantom{XXXXXXXXXXX}\times \|v_1-v_2\|_{X([0,T]\times\R^d)}\notag\\
&\leq C_2 \left(2a^{\frac{4}{d-2}}+(2T)^{\frac{4\theta}{d-2}}(\|u_0\|_{L^2}+\|u_1\|_{H^{-1}})^{\frac{4}{d-2}}\right) \|v_1-v_2\|_{X([0,T]\times\R^d)}.
\end{align}

\noindent
Making $a$ and $T$ smaller if needed, we obtain that  $\Gamma^\omega$ is a contraction on $B_a$
for all $\omega\in\Omega_T$.
This proves the existence of a unique solution
$v^\omega$ of \eqref{v} in $B_a$.

Next, we show that $v^\omega\in L^{\infty}_t([0,T]; L^2_x(\R^d))$. 
By \eqref{Duhamel_v} and  \eqref{improved_Strichartz},
we have that 
\begin{align*}
\|v^\omega(t)\|_{L^2_x(\R^d)}&\leq \int_0^t \Big\| \frac{\sin ((t-t')|\nabla|)}{|\nabla|}F(v^\omega+z^\omega)(t')\Big\|_{L^2_x}dt'
\leq \int_0^t (t-t')\|F(v^\omega+z^\omega)(t')\|_{L^2_x}dt'\\
&\leq C_3T\Big(\|v^\omega\|_{X([0,T]\times\R^d)}^{\frac{d+2}{d-2}}+\|z^\omega\|_{X([0,T]\times\R^d)}^{\frac{d+2}{d-2}}\Big)\\
&\leq C_3T\Big(a^{\frac{d+2}{d-2}}+ (2T)^{\frac{(d+2)\theta}{d-2}}(\|u_0\|_{L^2}+\|u_1\|_{H^{-1}})^{\frac{d+2}{d-2}}\Big)
\leq Ta,
\end{align*}

\noindent
for all $\omega\in \Omega_T$, if $C_3a^{\frac{4}{d-2}}\leq \frac 12$ and $T$ is sufficiently small.

Finally, we prove that $\pa_t v^\omega \in L^{\infty}_t([0,T]; L^2_x(\R^d))$.
By \eqref{Strichartz}, the bound on \\$\|F(v+z^\omega)\|_{L^1_TL^2_x}$ obtained in \eqref{Gamma},
and the choice we made for $a$ and $T$, we have for $\omega\in\Omega_T$ that
\begin{align*}
\|\pa_t v^\omega\|_{L^{\infty}_t([0,T]; L^2_x(\R^d))}&\leq C\|F(v^\omega+z^\omega)\|_{L^1_TL^2_x}\\
&\leq C_1a^{\frac{d+2}{d-2}} +C_1(2T)^{\frac{(d+2)\theta}{d-2}}(\|u_0\|_{L^2}+\|u_1\|_{H^{-1}})^{\frac{d+2}{d-2}}\leq a.
\end{align*}
This concludes the proof of (a).


To prove (b), we decompose the solution $u^\omega$
into its linear and nonlinear parts $u^\omega= \tilde z^\omega + \tilde v^\omega$, where
\begin{align*}
\tilde z^\omega (t)&=S(t-t_\ast)\bigg(\Big(S(t_\ast)(u_0^\omega, u_1^\omega), \pa_t S(t_\ast)(u_0^\omega, u_1^\omega)\Big)+(w_0,w_1)\bigg)\\
&=S(t)(u_0^\omega, u_1^\omega)+ S(t-t_\ast)(w_0, w_1)
\end{align*}

\noindent
and the nonlinear part $\tilde v^\omega$ satisfies
\begin{align*}
\tilde v^\omega (t)= -\int_{t_\ast}^t \frac{\sin((t-t')|\nabla|)}{|\nabla|}F(\tilde v^\omega + \tilde z^\omega)(t')dt'.
\end{align*}

\noindent
As in (a), it is sufficient to design 
a fixed point argument to prove the local existence and uniqueness of $\tilde v^\omega$ on $[t_\ast - T', t_\ast+T']$ for all $\omega \in \Omega_{T}$.
The key observation is that, for all $\omega\in\Omega_T$, the improved local-in-time Strichartz estimates
hold uniformly on subintervals $[t_\ast-T', t_\ast + T']\subset [-T,T]$. Therefore,
we have
\begin{align*}
\|\tilde z^\omega\|_{X([t_\ast-T', t_\ast + T']\times\R^d)}
&\leq \|S(t)(u_0^\omega, u_1^\omega)\|_{X([t_\ast-T', t_\ast + T']\times\R^d)}\\
&+\|S(t-t_\ast)(w_0,w_1)\|_{X([t_\ast-T', t_\ast + T']\times\R^d)}\\
&\leq (2T)^{\theta}(\|u_0\|_{L^2}+\|u_1\|_{H^{-1}})
+\|S(t)(w_0,w_1)\|_{X([-T', T']\times\R^d)}. 
\end{align*}

\noindent
We then choose $T'<T$ sufficiently small depending on $w_0$ and $w_1$ such that 
$\|S(t)(w_0,w_1)\|_{X([-T', T']\times\R^d)}$ is small.
The rest of the proof 
follows exactly as that of (a).

\end{proof}


We conclude this section with the proof of the probabilistic small data global theory in Theorem \ref{scattering}.

\begin{proof}[Proof of Theorem \ref{scattering}]
To prove Theorem \ref{scattering},
it suffices to prove almost sure global existence and uniqueness
for the equation \eqref{v} satisfied by the nonlinear part of the solution $v^\omega:=u^\omega-z^\omega$.

Let $\eta>0$ sufficiently small such that
\[2C_1\eta^{\frac{4}{d-2}}\leq 1, \quad \quad 3C_2\eta^{\frac{4}{d-2}}\leq \frac 12,\]
where $C_1, C_2$ are the constants appearing in \eqref{Gamma}
and \eqref{diff_Gamma} above.
Then, by Proposition \ref{globalS},
there exists a set $\Omega_\eps$
with 
\[P(\Omega_\eps^c)\leq C\exp \left(-c\frac{\eta^2}{\eps^2(\|u_0\|_{\dot{H}^s}+\|u_1\|_{\dot{H}^{s-1}})^2}\right)\]
such that, for all $\omega\in \Omega_\eps$, we have
\[\|z^\omega\|_{X(\R\times\R^d)}
=\|S(t)(\eps u_0^\omega, \eps u_1^\omega)\|_{L_t^{\frac{d+2}{d-2}}\big(\R;L_x^{\frac{2(d+2)}{d-2}}(\R^d)\big)}\leq \eta.\]

In the following, we prove that $\Gamma^\omega$ defined in \eqref{Gammav} is a contraction on the ball
\begin{align*}
B_\eta:=\left\{v \in C\big(\R; \dot{H}^1(\R^d)\big)\cap X\big(\R\times\R^d\big): 
\|v\|_{L^{\infty}_t(\R; \dot{H}^1_x(\R^d))}+\|v\|_{X(\R\times\R^d)}\leq \eta\right\}.
\end{align*}
Indeed, similarly to \eqref{Gamma}
and \eqref{diff_Gamma},
we have for $\omega\in\Omega_\eps$ and $v, v_1, v_2\in B_\eta$ that
\begin{align*}
\|\Gamma^\omega v\|_{L^{\infty}_t(\R; \dot{H}^1_x(\R^d))}&
+\|\Gamma^\omega v\|_{X(\R\times\R^d)}
\leq C_1 \|v\|_{X(\R^\times\R^d)}^{\frac{d+2}{d-2}}+C_1\|z^\omega\|_{X(\R^\times\R^d)}^{\frac{d+2}{d-2}}\leq 2C_1\eta^{\frac{d+2}{d-2}}\leq \eta
\end{align*}
and
\begin{align*}
\|\Gamma^\omega v_1-\Gamma^\omega v_2&\|_{L^\infty_t(\R; \dot{H}^1_x(\R^d))}+\|\Gamma^\omega v_1-\Gamma^\omega v_2\|_{X(\R^\times\R^d)}\notag\\
&\leq C_2 \left(\|v_1\|_{X(\R^\times\R^d)}^{\frac{4}{d-2}}+\|v_2\|_{X(\R^\times\R^d)}^{\frac{4}{d-2}}+\|z^\omega\|_{X(\R^\times\R^d)}^{\frac{4}{d-2}}\right) \|v_1-v_2\|_{X(\R^\times\R^d)}\notag\\
&\leq 3C_2\eta^{\frac{4}{d-2}}\|v_1-v_2\|_{X(\R^\times\R^d)}\leq \frac 12 \|v_1-v_2\|_{X(\R^\times\R^d)}.
 \end{align*}
Thus, $\Gamma^\omega$ is indeed a contraction on $B_\eta$
as long as $\omega\in \Omega_\eps$.
Therefore, by the Banach fixed point theorem, for all $\omega\in \Omega_\eps$,
there exists a unique global solution $v^\omega\in B_\eta$
of equation \eqref{v}.
By the Strichartz estimates \eqref{Strichartz},
we also have that
\[\|\pa_t v^\omega\|_{L^\infty_t(\R; L^2_x(\R^d))}\lesssim \|v^\omega\|_{X(\R^\times\R^d)}^{\frac{d+2}{d-2}}+\|z^\omega\|_{X(\R^\times\R^d)}^{\frac{d+2}{d-2}}
\lesssim \eta^{\frac{d+2}{d-2}}.\]
Since the following global space-time bound holds
\[\|v^\omega\|_{X(\R^\times\R^d)}\leq \eta<\infty,\]
a standard argument then shows that, for all $\omega\in \Omega_\eps$, $(v^\omega,\pa_t v^\omega)$ scatters in $\dot{H}^1(\R^d)\times L^2(\R^d)$ to 
a linear solution, both forward and backward in time.

\end{proof}


\section{Deterministic local well-posedness}

This section is dedicated to the local well-posedness of
the energy-critical defocusing nonlinear wave equation with a deterministic 
perturbation on $\R^d$, $d=4$ and $5$. We start with a standard local well-posedness 
result and a blowup criterion. 
We then upgrade these to a ``good''
local well-posedness result in which
the time of existence 
depends only on 
the $\dot{H}^1(\R^d)\times L^2(\R^d)$-norm of the initial data and on the perturbation.
This upgraded local well-posedness
is one of the two main ingredients in proving Theorem \ref{main}, the other ingredient being a probabilistic energy bound.

\begin{proposition}[Deterministic local well-posedness]
\label{deterministic_LWP}
Let $d=4$ or $5$ and $(v_0,v_1)\in\dot{H}^1(\R^d)\times L^2(\R^d)$. 
Let $t_0\in\R$ and let $I$ be an interval containing $t_0$.
Then, there exists $\delta>0$ sufficiently small
such that if 
\[\|f\|_{X(I)}\leq \delta^{\frac{d-2}{d+2}}\]
and 
\begin{equation}\label{profile}
\|S(t-t_0)(v_0,v_1)\|_{X(I)}\leq \delta,
\end{equation}
the Cauchy problem
\begin{align}\label{v_data}
\begin{cases}
\pa_{t}^2v-\Delta v+F(v+f)=0\\
(v, \dt v)\big|_{t = t_0} = (v_0, v_1), 
\end{cases}
\end{align}
admits a unique solution $(v,\pa_t v)\in C(I; \dot{H}^1(\R^d)\times L^2(\R^d))$.
Here, $v$
is unique in the ball $B_{a}(I)$ of 
$X(I)$ defined by
\begin{equation}\label{Bab}
B_{a}(I):=\{v\in X(I): \|v\|_{X(I)}\leq a\},
\end{equation}
where $a=C_0\delta$ for some $C_0>0$.
\end{proposition}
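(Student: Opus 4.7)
The plan is a contraction-mapping argument in the Strichartz space $X(I)$. By Duhamel's formula \eqref{Duhamel}, solving \eqref{v_data} is equivalent to finding a fixed point of
\begin{equation*}
\Gamma v(t) := S(t-t_0)(v_0,v_1) - \int_{t_0}^{t}\frac{\sin((t-t')|\nabla|)}{|\nabla|}F(v+f)(t')\,dt',
\end{equation*}
and I would seek it in the ball $B_a(I)$ defined by \eqref{Bab} with $a=C_0\delta$ for a constant $C_0$ to be fixed.

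First, to show $\Gamma$ maps $B_a(I)$ into itself, I apply Proposition \ref{prop:Strichartz} with the $\dot H^1$-admissible pair $\bigl(\tfrac{d+2}{d-2},\tfrac{2(d+2)}{d-2}\bigr)$ and with the $\dot H^0$-admissible pair $(\infty,2)$ dual to the source norm $L^1_tL^2_x$, obtaining
\begin{equation*}
\|\Gamma v\|_{X(I)} + \|(\Gamma v,\pa_t \Gamma v)\|_{L^\infty_t(I;\dot H^1_x\times L^2_x)} \leq \|S(t-t_0)(v_0,v_1)\|_{X(I)} + C\|F(v+f)\|_{L^1_tL^2_x(I\times\R^d)}.
\end{equation*}
Splitting $F(v+f)=F(v)+[F(v+f)-F(v)]$, applying \eqref{Fbasic} to the difference, and using H\"older in space-time against the $X$-norm yields
\begin{equation*}
\|F(v+f)\|_{L^1_tL^2_x} \lesssim \|v\|_{X(I)}^{\frac{d+2}{d-2}} + \|f\|_{X(I)}\bigl(\|v\|_{X(I)}^{\frac{4}{d-2}} + \|f\|_{X(I)}^{\frac{4}{d-2}}\bigr).
\end{equation*}
The crucial point is that the hypothesis on $f$ is calibrated so that $\|f\|_{X(I)}^{(d+2)/(d-2)}\leq\delta$, while on $B_a(I)$ one has $\|v\|_{X(I)}^{(d+2)/(d-2)}\leq(C_0\delta)^{(d+2)/(d-2)}=o(\delta)$ and the cross term is $\lesssim \delta^{(d-2)/(d+2)+4/(d-2)}=o(\delta)$, since $\tfrac{d-2}{d+2}+\tfrac{4}{d-2}>1$ for $d=4,5$. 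Combined with $\|S(t-t_0)(v_0,v_1)\|_{X(I)}\leq\delta$, this gives $\|\Gamma v\|_{X(I)}\leq(1+o_\delta(1))\delta\leq C_0\delta$ for $C_0$ large enough and $\delta$ sufficiently small.

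For the contraction property, \eqref{Fbasic} gives
\begin{equation*}
\|\Gamma v_1-\Gamma v_2\|_{X(I)} \leq C\|v_1-v_2\|_{X(I)}\bigl(\|v_1\|_{X(I)}+\|v_2\|_{X(I)}+\|f\|_{X(I)}\bigr)^{\frac{4}{d-2}},
\end{equation*}
and on $B_a(I)$ the bracketed quantity is $\lesssim\delta^{(d-2)/(d+2)}$, whose $\tfrac{4}{d-2}$-power is $\delta^{4/(d+2)}\ll 1$. Hence $\Gamma$ is a strict contraction on $B_a(I)$ for $\delta$ small, and Banach's fixed point theorem produces a unique $v\in B_a(I)$ with $\Gamma v = v$. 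The $L^\infty_t(I;\dot H^1_x\times L^2_x)$ bound for $(v,\pa_t v)$ comes out of the same Strichartz estimate, and continuity in time is the usual consequence of the Duhamel representation together with the Strichartz control of the nonlinear integral.

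The main obstacle is conceptual rather than technical: one must recognize that the two smallness hypotheses $\|f\|_{X(I)}\leq\delta^{(d-2)/(d+2)}$ and $\|S(t-t_0)(v_0,v_1)\|_{X(I)}\leq\delta$ are precisely calibrated so that every nonlinear contribution to $\|F(v+f)\|_{L^1_tL^2_x}$ is $O(\delta)$, matching the size of the free-evolution piece. Once this calibration is identified, what remains is routine Strichartz and H\"older bookkeeping.
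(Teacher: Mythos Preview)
Your proposal is correct and matches the paper's approach exactly: the paper omits the proof, stating only that it ``consists in using Duhamel's formula \eqref{Duhamel} to design a fixed point argument in $B_a$,'' which is precisely what you carry out. One small inaccuracy: in your first displayed inequality the $L^\infty_t(I;\dot H^1_x\times L^2_x)$-norm of $(\Gamma v,\pa_t\Gamma v)$ cannot be bounded by $\|S(t-t_0)(v_0,v_1)\|_{X(I)}$ alone---the free evolution contributes $\|(v_0,v_1)\|_{\dot H^1\times L^2}$ there---but this is harmless since only the $X(I)$-bound is needed for the fixed-point argument, and you correctly separate the energy-norm control at the end.
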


The proof of Proposition \ref{deterministic_LWP}
is standard and therefore we omit it.
It consists in using Duhamel's formula \eqref{Duhamel}
to design a fixed point argument in $B_{a}$.
As a consequence of Proposition \ref{deterministic_LWP},
we obtain the following blowup criterion.


\begin{lemma}[Blowup criterion]
\label{blowup}
Let $d=4$ or $5$ and $(v_0,v_1)\in\dot{H}^1(\R^d)\times L^2(\R^d)$. 
Let $T_0<T_1<T_2$ and let $f$ be a function with the property that 
$\|f\|_{X([T_0,T_2])}<\infty$.
If $v$ is a solution on $[T_0,T_1]$ of 
the Cauchy problem
\begin{align}\label{v_dataT0}
\begin{cases}
\partial_t^2 v-\Delta v+F(v+f)=0\\
(v, \dt v)\big|_{t = T_0} = (v_0, v_1), 
\end{cases}
\end{align}
satisfying 
\[\|v\|_{X([T_0,T_1])}<\infty,\]
then there exists $\eps_0>0$ such that the solution $v$
can be uniquely extended to $[T_1,T_1+\eps_0]$.

Equivalently,
if $T_1<\infty$ is the maximal time of existence of the solution $v$ of \eqref{v_dataT0}, then
\[\|v\|_{X([T_0,T_1])}=\infty.\]
\end{lemma}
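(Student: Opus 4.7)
My plan is to first obtain an a priori bound on $\|(v(T_1), \pa_t v(T_1))\|_{\dot H^1(\R^d) \times L^2(\R^d)}$ from the finite $X$-norm hypotheses, and then apply Proposition~\ref{deterministic_LWP} at the initial time $T_1$ to extend $v$ just past $T_1$. The crucial observation is that Proposition~\ref{deterministic_LWP} requires smallness only on the $X$-norm of the free evolution of the data (and on the $X$-norm of the perturbation), and not on the $\dot H^1 \times L^2$-norm of the data itself; an arbitrarily large but finite bound on $\|(v(T_1), \pa_t v(T_1))\|_{\dot H^1 \times L^2}$ will therefore suffice.

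For the a priori bound I would exploit the absolute continuity of the $X$-norm: since $\|v\|_{X([T_0,T_1])} < \infty$ and $\|f\|_{X([T_0,T_2])} < \infty$, one can partition $[T_0, T_1] = \bigcup_{j=0}^{N-1} [t_j, t_{j+1}]$ into finitely many subintervals $I_j$ on each of which $\|v\|_{X(I_j)} + \|f\|_{X(I_j)} \leq \eta$, with $\eta$ small and fixed. Applying the Strichartz estimates of Proposition~\ref{prop:Strichartz} to the Duhamel formula for $v$ on $I_j$, together with the pointwise bound $|F(v+f)| \lesssim |v|^{(d+2)/(d-2)} + |f|^{(d+2)/(d-2)}$ and H\"older in the $X$-space, yields
\[\|(v(t_{j+1}), \pa_t v(t_{j+1}))\|_{\dot H^1 \times L^2} \leq C\bigl(\|(v(t_j), \pa_t v(t_j))\|_{\dot H^1 \times L^2} + \|v\|_{X(I_j)}^{(d+2)/(d-2)} + \|f\|_{X(I_j)}^{(d+2)/(d-2)}\bigr).\]
Iterating this inequality $N$ times produces a finite constant $M := \|(v(T_1), \pa_t v(T_1))\|_{\dot H^1 \times L^2} < \infty$.

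With $M$ in hand, I would invoke the dominated convergence theorem on the one hand for $\|f\|_{X([T_1, T_1 + \cdot])}$, and on the other hand for $\|S(\cdot - T_1)(v(T_1), \pa_t v(T_1))\|_{X([T_1, T_1 + \cdot])}$ (which is finite on all of $\R$ thanks to the global $\dot H^1$-critical Strichartz estimate applied to free wave data of size $M$), in order to choose $\eps_0 > 0$ small enough that
\[\|f\|_{X([T_1, T_1 + \eps_0])} \leq \delta^{(d-2)/(d+2)} \qquad \text{and} \qquad \|S(t - T_1)(v(T_1), \pa_t v(T_1))\|_{X([T_1, T_1 + \eps_0])} \leq \delta.\]
These are exactly the hypotheses of Proposition~\ref{deterministic_LWP} at the initial time $T_1$, which therefore produces a unique solution $\tilde v$ on $[T_1, T_1 + \eps_0]$. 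Gluing $\tilde v$ to $v$ via uniqueness on small overlapping intervals completes the extension, and the equivalent blowup formulation follows by contraposition.

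The main obstacle is the iteration in the a priori step: because the Strichartz constant $C$ may exceed $1$, the bound $M$ can grow like $C^N$, so one must verify that $N$ is controlled purely in terms of the given finite norms and does not itself depend on the thing being estimated. Since $N$ depends only on $\|v\|_{X([T_0,T_1])}$, $\|f\|_{X([T_0,T_2])}$, and the fixed threshold $\eta$, this is indeed finite, and the argument closes. A minor secondary point is verifying that the $X$-norm of the free evolution is continuous in the interval endpoint, which is immediate from dominated convergence once the global $X$-norm of $S(\cdot - T_1)(v(T_1), \pa_t v(T_1))$ is known to be finite.
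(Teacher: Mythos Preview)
Your proposal is correct and follows essentially the same approach as the paper, which in fact omits the proof entirely, noting only that the lemma is a consequence of Proposition~\ref{deterministic_LWP}. Your write-up supplies the standard details the paper leaves implicit: obtaining $(v(T_1),\pa_t v(T_1))\in \dot H^1\times L^2$ via the finite $X$-norm and Strichartz estimates, then invoking Proposition~\ref{deterministic_LWP} on a short interval $[T_1,T_1+\eps_0]$ where both \eqref{profile} and the smallness condition on $f$ hold by absolute continuity.
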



Condition \eqref{profile} in Proposition 
\ref{deterministic_LWP}
shows that the local time of existence of the solution of the perturbed NLW \eqref{v_data}
depends on the 
profile of the initial data $(v_0,v_1)$. 
In the following, we upgrade the 
local well-posedness result in Proposition 
\ref{deterministic_LWP} to a ``good''
local well-posedness, in which 
the local time of existence depends only on the 
$\dot{H}^1(\R^d)\times L^2(\R^d)$-norm
of $(v_0,v_1)$ and on the perturbation $f$.

\begin{proposition}[``Good'' local well-posedness]
\label{prop:main}
Let $d=4$ or $5$ and $(v_0,v_1) \in \dot{H}^1(\R^d)\times L^2(\R^d)$.
Let $K,\gamma >0$, $t_0<T$, and  
let $f$ be a real-valued function defined on $[t_0,T]$.

Then, there exists $\tau=\tau\Big(\|(v_0,v_1)\|_{\dot{H}^1(\R^d)\times L^2(\R^d)}, K,\gamma\Big)$ sufficiently small 
and non-increasing in the first two arguments
such that, 
if $f$ satisfies the condition
\begin{equation}\label{f}
\|f\|_{X([t_0,t_0+\tau_\ast])}\leq K\tau_\ast^\gamma
\end{equation}
for some $0<\tau_\ast\leq \tau$,
then
\eqref{v_data}
admits a unique solution $(v,\partial_tv)$
in $C([t_0,t_0+\tau_\ast]; \dot{H}^1(\R^d)\times L^2 (\R^d))$.
Moreover,
\begin{align}\label{deterministic_bound}
\|(v,\pa_t v)\|_{L^\infty_t([t_0,t_0+\tau_\ast];\dot{H}^1_x(\R^d)\times L^2_x(\R^d))}
+\|v\|&_{L^q_t([t_0,t_0+\tau_\ast]; L^r_x(\R^d))}\notag\\
&\leq C\left(\|(v_0,v_1)\|_{\dot{H}^1(\R^d)\times L^2(\R^d)}\right),
\end{align}
for all $\dot{H}^1(\R^d)$-wave admissible pairs $(q,r)$, where $C(\cdot)$ is a positive non-decreasing function.

Here uniqueness holds in the following sense.
There exists a family of disjoint intervals $\{\mathcal{I}_n\}_{n\in\N}$
covering $[t_0,t_0+\tau_\ast]$
such that $v$ is unique in each ball $B_{a}(\mathcal I_n)$
of $X(\mathcal I_n)$ for all $n\in\N$,
where $a>0$ is a sufficiently small constant.
 \end{proposition}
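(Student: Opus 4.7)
The strategy is to construct $v$ by comparing with the solution $\bar v$ of the corresponding unperturbed energy-critical NLW, and to iterate a stability argument over small subintervals. The global well-posedness results of Bahouri--G\'erard (for $d=4$) and Tao (for $d=5$) provide a global solution $\bar v$ of $\pa_t^2 \bar v - \Delta \bar v + F(\bar v) = 0$ with the same initial data $(v_0,v_1)$ at $t_0$, satisfying the global bound
$$\|\bar v\|_{X(\R)} + \|(\bar v, \pa_t \bar v)\|_{L^\infty_t(\R;\, \dot{H}^1 \times L^2)} \le M := M\big(\|(v_0,v_1)\|_{\dot{H}^1(\R^d) \times L^2(\R^d)}\big),$$
where $M(\cdot)$ is a non-decreasing function. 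This global bound replaces the profile-dependent smallness $\|S(\cdot - t_0)(v_0,v_1)\|_{X(I)} \le \delta$ from Proposition \ref{deterministic_LWP} and is precisely what allows us to extract a $\tau$ depending only on the norm of the data.

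For a small constant $\eta = \eta(M)>0$ to be chosen, partition $[t_0, t_0+\tau]$ into disjoint subintervals $\mathcal{I}_0,\ldots,\mathcal{I}_{N-1}$ with $\|\bar v\|_{X(\mathcal{I}_n)} \le \eta$ for every $n$. Since $\|\bar v\|_{X([t_0,t_0+\tau])}^{(d+2)/(d-2)} = \sum_n \|\bar v\|_{X(\mathcal{I}_n)}^{(d+2)/(d-2)}$, this can be arranged with $N \le (M/\eta)^{(d+2)/(d-2)}+1$, a number depending only on $M$ and $\eta$. Writing $v = \bar v + w$, the correction $w$ satisfies
$$\pa_t^2 w - \Delta w + F(\bar v + w + f) - F(\bar v) = 0, \qquad (w,\pa_t w)\big|_{t=t_0}=(0,0),$$
and \eqref{Fbasic} together with H\"older's inequality controls $\|F(\bar v + w + f) - F(\bar v)\|_{L^1_tL^2_x(\mathcal{I}_n)}$ by $C\,(\|w\|_{X(\mathcal{I}_n)} + \|f\|_{X(\mathcal{I}_n)})\,\big(\|\bar v\|_{X(\mathcal{I}_n)}^{4/(d-2)} + \|w\|_{X(\mathcal{I}_n)}^{4/(d-2)} + \|f\|_{X(\mathcal{I}_n)}^{4/(d-2)}\big).$
Since $\|\bar v\|_{X(\mathcal{I}_n)} \le \eta$ by construction and $\|f\|_{X(\mathcal{I}_n)} \le K\tau^\gamma$ is small provided $\tau$ is chosen small, a Strichartz-based contraction for $w$ in a small ball of $X(\mathcal{I}_n) \cap C(\mathcal{I}_n; \dot{H}^1)$, starting from the value $(w(t_n), \pa_t w(t_n))$ inherited from the previous subinterval, produces $w$ on $\mathcal{I}_n$. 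Iterating $N$ times yields $v = \bar v + w$ on all of $[t_0, t_0+\tau_*]$.

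The bound \eqref{deterministic_bound} then follows by summing the per-subinterval Strichartz bounds, and the stated uniqueness within each ball $B_a(\mathcal{I}_n)$ of $X(\mathcal{I}_n)$ follows from Proposition \ref{deterministic_LWP} applied on $\mathcal{I}_n$ to the data $(v(t_n), \pa_t v(t_n))$, since $\|S(\cdot - t_n)(v(t_n), \pa_t v(t_n))\|_{X(\mathcal{I}_n)} \le \delta$ is obtained by combining $\|\bar v\|_{X(\mathcal{I}_n)} \le \eta$ with a Strichartz bound on the linear evolution of $(w(t_n), \pa_t w(t_n))$. The principal obstacle is the energy-growth bookkeeping in the iteration: because $N \asymp \eta^{-(d+2)/(d-2)}$ grows unboundedly as $\eta \to 0$, the cumulative increase of $\|(w(t_n), \pa_t w(t_n))\|_{\dot{H}^1\times L^2}$ over all $N$ subintervals must be kept uniformly bounded. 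This is the standard difficulty at the energy-critical scaling, and I expect to resolve it by a simultaneous choice of $\eta$ (depending on $M$) and $\tau$ (depending on $M$, $K$, $\gamma$, and $\eta$, hence non-increasing in $\|(v_0,v_1)\|_{\dot{H}^1\times L^2}$ and $K$ as required), coupled with a bootstrap argument that closes the energy norm of $w$ uniformly in $n$.
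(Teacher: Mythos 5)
Your proposal is correct and follows essentially the same route as the paper: the paper likewise compares $v$ with the global unperturbed solution $w$ of the same data, uses the global space-time bound \eqref{bound_w} to partition time into $J=J\big(\|(v_0,v_1)\|_{\dot{H}^1\times L^2},\eta\big)$ pieces with $\|w\|_{X(I_j)}\sim\eta$, and iterates a perturbation argument with error $e=F(v+f)-F(v)$. The obstacle you flag at the end is resolved exactly as you anticipate: the per-interval errors grow geometrically like $C(4\eta)^{j}\eps_0$, but since $J$ is finite and depends only on the data norm and $\eta$, one fixes $\eps_0$ and then $\tau$ (hence the smallness of $f$) last, so that $C(4\eta)^{J}\eps_0$ stays below the perturbation threshold.
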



The key ingredient in the proof of Proposition \ref{prop:main}
consists in the following perturbation lemmas.
\begin{lemma}[Short-time perturbations]\label{short_perturb}
Let $d=4$ or $5$, $(v_0,v_1)\in \dot{H}^1(\R^d)\times L^2(\R^d)$, $I\subset\R$ be a compact time interval, and $t_0\in I$.
Let $v$ be a solution defined on $I\times\R^d$ of the perturbed equation
\[\pa_{t}^2 v-\Delta v+ F(v)=e\]

\noindent
with initial data $(v,\pa_t v)\big|_{t=t_0}=(v_0,v_1)$.
Let $(w_0,w_1)\in \dot{H}^1(\R^d)\times L^2(\R^d)$ 
and let $w$ be the solution of the energy-critical defocusing nonlinear wave equation on 
$I\times\R^d$ with initial data $(w,\pa_t w)\big|_{t=t_0}=(w_0,w_1)$.

Then, there exist $\delta>0$ and $\eps_0>0$
sufficiently small such that if $0<\eps<\eps_0$ and
\begin{align}
\|v\|_{X(I\times \R^d)}&\leq \delta\label{vdelta}\\
\left\|\left(v_0-w_0,v_1-w_1\right)\right\|_{\dot{H}^1(\R^d)\times L^2(\R^d)}&\leq \eps\label{initial}\\
\left\|e\right\|_{L^1_t(I; L^2_x(\R^d))}&\leq \eps,\label{e}
\end{align}
there exists $C\geq 1$ such that the following holds:
\begin{align*}
\sup_{t\in I}\left\|\left(v(t)-w(t),\pa_tv(t)-\pa_t w(t)\right)\right\|_{\dot{H}^1_x(\R^d)\times L^2_x(\R^d)}
+\left\|v-w\right\|_{L^q_t(I; L^r_x(\R^d))}
\leq C\eps,
\end{align*}
for all $\dot{H}^1(\R^d)$-wave admissible pairs $(q,r)$.
\end{lemma}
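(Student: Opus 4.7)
The plan is to set $h := v - w$, which by subtracting the equations for $v$ and $w$ satisfies
\[
\pa_t^2 h - \Delta h = e - \bigl(F(v) - F(w)\bigr), \qquad (h, \pa_t h)\big|_{t=t_0} = (v_0 - w_0, v_1 - w_1).
\]
For any subinterval $J \subset I$ containing $t_0$, I would apply the Strichartz estimates of Proposition~\ref{prop:Strichartz} with $\gamma = 1$, placing the forcing in $L^1_t(J; L^2_x)$ (the dual of the $\dot{H}^0$-wave admissible pair $(\infty, 2)$). Combined with \eqref{initial}, \eqref{e}, and the pointwise bound \eqref{Fbasic} together with Hölder in space-time at the scaling-critical exponents $\bigl(\tfrac{d+2}{d-2}, \tfrac{2(d+2)}{d-2}\bigr)$, this yields, for every $\dot{H}^1$-wave admissible $(q,r)$,
\[
\|(h, \pa_t h)\|_{L^\infty_t(J; \dot{H}^1 \times L^2)} + \|h\|_{X(J)} + \|h\|_{L^q_t(J; L^r_x)} \leq C_1 \eps + C_2\,\|h\|_{X(J)}\Bigl(\|v\|_{X(J)}^{\frac{4}{d-2}} + \|w\|_{X(J)}^{\frac{4}{d-2}}\Bigr).
\]

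Let $Z(J)$ denote the sum of the energy norm and $\|h\|_{X(J)}$ on the left-hand side. Since $w = v - h$, the triangle inequality and \eqref{vdelta} give $\|w\|_{X(J)} \leq \delta + Z(J)$, so
\[
Z(J) \leq C_1 \eps + C_2\, Z(J)\Bigl(\delta^{\frac{4}{d-2}} + Z(J)^{\frac{4}{d-2}}\Bigr).
\]
The closure is by a standard continuity/bootstrap argument: since $v, w \in C(I; \dot{H}^1 \times L^2)$, the map $J \mapsto Z(J)$ is continuous in the right endpoint of $J$ and tends to $0$ as $J$ shrinks to $\{t_0\}$. I would consider the maximal $J \subset I$ on which $Z(J) \leq 3 C_1 \eps$; choosing $\delta$ so that $C_2 \delta^{4/(d-2)} \leq \tfrac{1}{4}$ and then $\eps_0$ so that $C_2 (3 C_1 \eps_0)^{4/(d-2)} \leq \tfrac{1}{4}$, the bracketed factor is at most $\tfrac{1}{2 C_2}$, so on this maximal $J$ one obtains $Z(J) \leq 2 C_1 \eps < 3 C_1 \eps$. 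Continuity then forces $J = I$, giving the conclusion with $C = 2 C_1$; the bound on every $\dot{H}^1$-admissible Strichartz norm comes out of the same application.

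The main obstacle is precisely closing this bootstrap: the nonlinear feedback $C_2 Z(J) \|w\|_{X(J)}^{4/(d-2)}$ is \emph{not} a priori small, since $w$ is only assumed to solve NLW with energy data and its $X$-norm could in principle be large. The smallness hypothesis \eqref{vdelta} is what allows one to estimate $\|w\|_{X(J)}$ by $\delta + Z(J)$ and absorb the resulting contribution into the left-hand side; this is exactly why the statement is labelled a \emph{short-time} perturbation lemma, and why a long-time version (not proved here) would require partitioning $I$ into finitely many subintervals on which \eqref{vdelta} holds and iterating, with a careful propagation of the accumulated error through \eqref{initial} at each step.
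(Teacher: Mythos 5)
Your proposal is correct and follows essentially the same route as the paper: write the difference equation, apply the Strichartz estimate with the forcing in $L^1_tL^2_x$, estimate $F(v)-F(w)$ via \eqref{Fbasic} and H\"older at the critical exponents, and close with a continuity argument after absorbing the $\delta^{4/(d-2)}$ and superlinear terms (the paper bounds the nonlinear difference by $|V|(|V|^{4/(d-2)}+|v|^{4/(d-2)})$ directly, which is equivalent to your step $\|w\|_{X(J)}\leq \delta+Z(J)$). One tiny imprecision: since your $Z(J)$ includes the $L^\infty_t(\dot H^1\times L^2)$ norm, it tends to $\|(v_0-w_0,v_1-w_1)\|_{\dot H^1\times L^2}\leq\eps$ rather than to $0$ as $J$ shrinks to $\{t_0\}$, but this still initializes the bootstrap below the threshold $3C_1\eps$, so the argument is unaffected.
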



\begin{proof}
Without loss of generality,
we can assume that $t_0=\inf I$.
We set $V:=w-v$. Then, $V$ satisfies the equation
\begin{align*}
\pa_t^2 V-\Delta V+F(V+v)-F(v)+e=0.
\end{align*}

\noindent
By Duhamel's formula, Strichartz estimates, \eqref{Fbasic}, H\"older's inequality,
\eqref{vdelta}, \eqref{initial}, and \eqref{e}, we obtain for $0<\eps<\eps_0$ that
\begin{align*}
\|(V,\pa_t V)\|_{L^\infty([t_0,t]; \dot{H}^1(\R^d)\times L^2(\R^d))}
&+\|V\|_{L^q([t_0,t];L^r_x(\R^d))}+\|V\|_{X([t_0,t]\times \R^d)}\\
&\lesssim \left\|\left(V(t_0),\pa_t V(t_0)\right)\right\|_{\dot{H}^1(\R^d)\times L^2(\R^d)}+ \|V\|_{X([t_0,t]\times \R^d)}^{\frac{d+2}{d-2}}\\
&+\|V\|_{X([t_0,t]\times\R^d)}\|v\|_{X([t_0,t]\times \R^d)}^{\frac{4}{d-2}}
+\|e\|_{L^1([t_0,t];L^2_x(\R^d))}\\
&\lesssim \eps+ \delta^{\frac{4}{d-2}} \|V\|_{X([t_0,t]\times \R^d)}+\|V\|_{X([t_0,t]\times \R^d)}^{\frac{d+2}{d-2}},
\end{align*}
for all $\dot{H}^1(\R^d)$-wave admissible pairs $(q,r)$.
If $\eps_0$ and $\delta$ are sufficiently small, then a standard continuity argument yields
$\|V\|_{X([t_0,t]\times\R^d)}\lesssim \eps$
for all $t\in I$. We then obtain that
\begin{equation*}
\sup_{t\in I}\|(V,\pa_t V)\|_{\dot{H}^1_x(\R^d)\times L^2_x(\R^d)}+\|V\|_{L^q_t(I;L^r_x(\R^d))}\lesssim \eps.
\end{equation*}
\end{proof}


\begin{lemma}[Long-time perturbations]\label{lemma:perturbation}
Let $d=4$ or $5$, $(v_0,v_1)\in \dot{H}^1(\R^d)\times L^2(\R^d)$, $I\subset\R$ be a compact time interval, $t_0\in I$, and $M>0$.
Let $v$ be a solution defined on $I\times\R^d$ of the perturbed equation
\[\pa_{t}^2 v-\Delta v+ F(v)=e\]
with initial data $(v,\pa_t v)\big|_{t=t_0}=(v_0,v_1)$, satisfying
\begin{align}
\|v\|_{X(I\times \R^d)}&\leq M.\label{vM}
\end{align}

\noindent
Let $(w_0,w_1)\in \dot{H}^1(\R^d)\times L^2(\R^d)$ 
and let $w$ be the solution of the energy-critical defocusing nonlinear wave equation on $I\times\R^d$ with initial data $(w,\pa_t w)\big|_{t=t_0}=(w_0,w_1)$.

Then, there exists $\tilde\eps (M)>0$
sufficiently small such that if $0<\eps<\tilde{\eps}(M)$ and
\begin{align}
\left\|\left(v_0-w_0,v_1-w_1\right)\right\|_{\dot{H}^1(\R^d)\times L^2(\R^d)}&\leq \eps\label{initial2}\\
\left\|e\right\|_{L^1_t(I; L^2_x(\R^d))}&\leq \eps,\label{e2}
\end{align}
the following holds:
\begin{align*}
\sup_{t\in I}\left\|\left(v(t)-w(t),\pa_tv(t)-\pa_t w(t)\right)\right\|_{\dot{H}^1_x(\R^d)\times L^2_x(\R^d)}
+\left\|v-w\right\|_{L^q_t(I; L^r_x(\R^d))}
&\leq C(M) \eps,
\end{align*}
for all $\dot{H}^1(\R^d)$-wave admissible pairs $(q,r)$.
Here $C(M)\geq 1$ is a non-decreasing function of $M$.
\end{lemma}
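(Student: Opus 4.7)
The plan is to reduce the long-time perturbation statement to the short-time perturbation statement (Lemma \ref{short_perturb}) by a subdivision and iteration argument. First, using the hypothesis $\|v\|_{X(I\times \R^d)}\leq M$ together with the absolute continuity of the $L^{\frac{d+2}{d-2}}_tL^{\frac{2(d+2)}{d-2}}_x$-norm, I would partition $I$ into $N=N(M,\delta)$ consecutive subintervals $I_1,\dots,I_N$ (with $t_0\in I_1$ and intervals ordered away from $t_0$ in both directions) such that
\begin{equation*}
\|v\|_{X(I_j\times\R^d)} \leq \delta \quad\text{for each } j=1,\dots,N,
\end{equation*}
where $\delta>0$ is the smallness threshold provided by Lemma \ref{short_perturb}. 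Note that $N$ depends only on $M$ and $\delta$.

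Next, I would run an inductive argument on $j$. On the first interval $I_1$, hypotheses \eqref{initial2} and \eqref{e2} supply initial-data and forcing differences of size $\eps$, so for $\eps$ small enough Lemma \ref{short_perturb} applies and yields
\begin{equation*}
\sup_{t\in I_1}\|(v-w,\pa_t v-\pa_t w)(t)\|_{\dot H^1\times L^2} + \|v-w\|_{L^q_t(I_1;L^r_x)} \leq C_1\eps,
\end{equation*}
for all $\dot H^1$-wave admissible $(q,r)$. In particular, at the right endpoint $t_1$ of $I_1$, the difference of the Cauchy data for $v$ and $w$ is bounded by $C_1\eps$. Passing to $I_2$ with new ``initial'' time $t_1$, the smallness condition \eqref{vdelta} still holds by construction, the forcing bound \eqref{e} is inherited since $\|e\|_{L^1_tL^2_x(I_2)}\leq \eps$, and the initial-data condition is now $C_1\eps$ rather than $\eps$. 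Iterating, on $I_j$ the initial-data difference is at most $C_1^{j-1}\eps$, and Lemma \ref{short_perturb} is applicable provided $C_1^{j-1}\eps<\eps_0$ and $C_1^{j-1}\eps<\eps_0$ for the forcing (both of which we can ensure uniformly in $j\leq N$). After $N$ steps we obtain
\begin{equation*}
\sup_{t\in I}\|(v-w,\pa_t v-\pa_t w)(t)\|_{\dot H^1\times L^2} + \|v-w\|_{L^q_t(I;L^r_x)} \leq C_1^N\eps =: C(M)\eps,
\end{equation*}
which is the desired bound with $C(M)=C_1^{N(M,\delta)}$ manifestly non-decreasing in $M$.

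To close the argument I would set
\begin{equation*}
\tilde\eps(M) := \frac{\eps_0}{C_1^{N(M,\delta)}},
\end{equation*}
so that every iteration stays within the range of validity of Lemma \ref{short_perturb}. An implicit point to verify is that the perturbation form of the equation is preserved at each step: on $I_j$, $v$ still solves $\pa_t^2 v-\Delta v + F(v)=e$ with the restricted forcing $e|_{I_j}$, and $w$ still solves the unperturbed equation, so Lemma \ref{short_perturb} applies directly with initial time $t_{j-1}$. The two-sided case (intervals to the left of $t_0$) is handled identically by time reversal. The only real obstacle is the bookkeeping of constants across the iteration, but since $N$ depends only on $M$ (once $\delta$ is fixed universally by Lemma \ref{short_perturb}), the exponential blow-up $C_1^N$ is absorbed into the final constant $C(M)$ and the smallness requirement on $\eps$ into $\tilde\eps(M)$.
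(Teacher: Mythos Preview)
Your proposal is correct and follows essentially the same approach as the paper's proof: subdivide $I$ into $N(M,\delta)$ pieces on which $\|v\|_X\leq\delta$, iterate Lemma~\ref{short_perturb} across the pieces so that the data mismatch grows geometrically to $C^N\eps$, and set $\tilde\eps(M)=\eps_0/C^{N(M,\delta)}$. The only cosmetic differences are that the paper normalizes $t_0=\inf I$ rather than treating both sides by time reversal, and writes $\|v\|_{X(I_j)}\sim\delta$ rather than $\leq\delta$.
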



\begin{proof}
Let $\delta>0$ be as in Lemma \ref{short_perturb}.
Without loss of generality, we can assume $t_0=\inf I$. 
The bound \eqref{vM} allows us to divide the interval $I$
into $J=J(M,\delta)$ subintervals $I_j=[t_j,t_{j+1}]$, such that
\begin{equation}
\|v\|_{X(I_j\times \R^d)}\sim\delta
\end{equation} 
for all $j=0,\dots,J-1$.
By \eqref{initial2}
and \eqref{e2} with $0<\eps<\eps_0$, Lemma \ref{short_perturb}
yields on the first interval $I_0$ that
\begin{align*}
\sup_{t\in I_0}\left\|\left(v(t)-w(t),\pa_tv(t)-\pa_t w(t)\right)\right\|_{\dot{H}^1_x(\R^d)\times L^2_x(\R^d)}
+\left\|v-w\right\|_{L^q_t(I_0; L^r_x(\R^d))}
&\leq C\eps,
\end{align*}

\noindent
for all $\dot{H}^1(\R^d)$-wave admissible pairs $(q,r)$.
In particular,
\[\left\|\left(v(t_1)-w(t_1),\pa_tv(t_1)-\pa_t w(t_1)\right)\right\|_{\dot{H}^1_x(\R^d)\times L^2_x(\R^d)}
\leq C\eps.\]
If $\eps$ is sufficiently small such that $C\eps< \eps_0$,
we can apply Lemma  \ref{short_perturb} on the interval $I_1$
and obtain
\begin{align*}
\sup_{t\in I_1}\left\|\left(v(t)-w(t),\pa_tv(t)-\pa_t w(t)\right)\right\|_{\dot{H}^1_x(\R^d)\times L^2_x(\R^d)}
+\left\|v-w\right\|_{L^q_t(I_1;L^r_x(\R^d))}
&\leq C^2\eps.
\end{align*}
Arguing recursively, we obtain
\begin{align*}
\sup_{t\in I_j}\left\|\left(v(t)-w(t),\pa_tv(t)-\pa_t w(t)\right)\right\|_{\dot{H}^1_x(\R^d)\times L^2_x(\R^d)}
+\left\|v-w\right\|_{L^q_t(I_j; L^r_x(\R^d))}
&\leq C^{j+1}\eps
\end{align*}
for each $j=0,1,\dots, J-1$,
as long as $\max_{j=0,J-1}C^j\eps<\eps_0$.
Since $J=J(M,\delta)$ is finite,
the conclusion follows with $\tilde{\eps}(M):=\frac{\eps_0}{C^{J(M,\delta)}}$.
\end{proof}


Before proceeding to the proof of Proposition \ref{prop:main},
we recall a global space-time bound for solutions 
of the energy-critical defocusing nonlinear wave equation
on $\R\times\R^d$, $d=4$ and $5$.
\begin{lemma}[Global space-time bound for energy-critical defocusing NLW on $\R^d$, $d=4, 5$]
Let $d=4$ or $5$ and $(v_0,v_1)\in\dot{H}^1(\R^d)\times L^2(\R^d)$.
Let $w$ be the solution of the
energy-critical defocusing nonlinear wave equation on $\R\times \R^d$:
\begin{equation*}
\begin{cases}
\pa_{t}^2w-\Delta w+F(w)=0\\
w(t_0)=v_0, \quad \pa_t w(t_0)=v_1.
\end{cases}
\end{equation*}
Then, the following holds:
\begin{equation}\label{bound_w}
\|w\|_{X(\R\times\R^d)}<C\left(\|(v_0,v_1)\|_{\dot{H}^1(\R^d)\times L^2(\R^d)}\right),
\end{equation} 
where $C(\cdot)$ is a positive non-decreasing function.
\end{lemma}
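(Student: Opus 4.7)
The plan is to combine energy conservation with small-data scattering, a global-in-time Morawetz-type a priori bound, and a partition/perturbation argument, following the classical work of Struwe, Grillakis, Shatah--Struwe, Bahouri--Shatah, Bahouri--G\'erard, and Tao. Since the nonlinearity is defocusing, all three terms in $E$ from \eqref{Zenergy} are nonnegative, so conservation of $E$ along the flow immediately yields the uniform bound
\[
\sup_{t\in\R}\|(w(t),\pa_t w(t))\|_{\dot H^1\times L^2}^2\leq 2E(v_0,v_1)\leq \widetilde C\big(\|(v_0,v_1)\|_{\dot H^1\times L^2}\big).
\]
This gives control of the energy but not of the space-time norm $\|w\|_{X(\R\times\R^d)}$, and is only the starting point.

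Next I would establish small-data global well-posedness with a linear $X$-bound. Namely, using the global Strichartz estimates of Proposition \ref{prop:Strichartz} applied to Duhamel's formula, a contraction argument identical in spirit to Proposition \ref{deterministic_LWP} (but now carried out on the whole real line) shows that if $\|(v_0,v_1)\|_{\dot H^1\times L^2}\leq \eta_0$ for some absolute constant $\eta_0$, then there is a unique global solution with $\|w\|_{X(\R\times\R^d)}\lesssim \|(v_0,v_1)\|_{\dot H^1\times L^2}$.

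The main step, and the main obstacle, is upgrading from small data to arbitrary energy. For this I would use the global Morawetz-type estimate obtained by multiplying the equation by $x\cdot\nabla w+\tfrac{d-1}{2}w$ (or a suitable conformal variant) and integrating by parts. After the refinements of Morawetz, Grillakis, and Shatah--Struwe, one obtains in the defocusing energy-critical case a bound of the form
\[
\iint_{\R\times\R^d}\frac{|w|^{\frac{2d}{d-2}}}{|x|}\,dx\,dt\leq C(E(v_0,v_1)),
\]
together with local energy decay. Combining this with finite speed of propagation and Strichartz estimates, one partitions $\R$ into finitely many intervals $\{I_j\}_{j=1}^N$, with $N=N(E(v_0,v_1))$, on each of which the free evolution $S(t-t_j)(w(t_j),\pa_t w(t_j))$ has $X(I_j)$-norm below the smallness threshold $\delta$ of Proposition \ref{deterministic_LWP}. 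On each such $I_j$ the local theory, applied with $f\equiv 0$, yields $\|w\|_{X(I_j)}\leq C_0\delta$, and summing in $j$ gives
\[
\|w\|_{X(\R\times\R^d)}^{\frac{d+2}{d-2}}=\sum_{j=1}^N\|w\|_{X(I_j)}^{\frac{d+2}{d-2}}\leq N(E(v_0,v_1))\,(C_0\delta)^{\frac{d+2}{d-2}}=C\big(\|(v_0,v_1)\|_{\dot H^1\times L^2}\big).
\]

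The hard part is the Morawetz/partition step: the classical Morawetz inequality controls only a weighted $L^{2d/(d-2)}$-norm, so extracting a quantitative bound on the number of ``good'' subintervals requires either the Bahouri--G\'erard profile decomposition (well suited to $d=4$) or Tao's induction-on-energy strategy (in \cite{Tao} for $d\geq 5$). Once that black box is invoked, assembling the final bound is a routine application of the perturbation lemmas \ref{short_perturb}--\ref{lemma:perturbation} already at our disposal.
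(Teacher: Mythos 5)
Your proposal lands on the right black box in the end, but the route you sketch around it is not the paper's, and its central mechanism should not be mistaken for a proof. The paper makes no use of a Morawetz estimate or of an interval partition. It first obtains the \emph{qualitative} statement $w\in X(\R\times\R^d)$ by citing Shatah--Struwe \cite{Shatah_Struwe} for local-in-time Besov regularity, Nakanishi \cite{Nakanishi1999} for the decay $\int_{\R^d}|w(t,x)|^{\frac{2d}{d-2}}dx\to 0$, and the argument of Bahouri--Shatah/Bahouri--G\'erard \cite{Bahouri_Shatah, Bahouri_Gerard} to conclude $w\in L^q_tL^r_x$ globally for every $\dot H^1$-admissible pair. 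It then upgrades this to the \emph{uniform} bound \eqref{bound_w} by a compactness/contradiction argument: if $\{w_n\}$ had bounded energy but $\|w_n\|_{X(\R\times\R^d)}\to\infty$, the profile decomposition for bounded-energy sequences of solutions (extended to $d=4,5$) would force $\{w_n\}$ to be bounded in $X$, a contradiction. By contrast, your key intermediate claim --- that Morawetz plus finite speed of propagation yields a partition of $\R$ into $N(E)$ intervals on each of which $S(t-t_j)(w(t_j),\pa_t w(t_j))$ is small in $X(I_j)$ --- is essentially equivalent to the conclusion rather than a step toward it: the Morawetz inequality controls only the weighted quantity $\iint |w|^{\frac{2d}{d-2}}/|x|$, and smallness of the free evolution emanating from the data at the intermediate times $t_j$ is exactly what the deep global theory must supply (it is \emph{not} implied by smallness of $S(t-t_0)(v_0,v_1)$ on $I_j$, because of the Duhamel contribution accumulated on $[t_0,t_j]$). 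You do acknowledge this by deferring to the profile decomposition or to Tao's induction on energy, which is legitimate and is the same reference point as the paper's; but once that is invoked, the contradiction argument gives \eqref{bound_w} directly, and the partition together with the perturbation lemmas becomes unnecessary. In short: correct citation of the deterministic global theory, but the Morawetz-and-partition narrative is not a viable substitute for the profile-decomposition step.
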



\begin{proof}
According to the work of Shatah and Struwe \cite{Shatah_Struwe},
it follows that $(w,\partial_t w)\in L^q_{\textup{loc}}(\R,\dot{B}^{\frac 12}_q\times\dot{B}^{-\frac 12}_q)$
with $q=\frac{2(d+1)}{d-1}$.
Then,
by Proposition 4.5 (ii) in \cite{Nakanishi1999},
we have that
\begin{equation}
\lim_{t\to\infty}\int_{\R^d}|w(t,x)|^{\frac{2d}{d-2}}dx=0.
\end{equation}
Proceeding as in the proof of Proposition 2.4
in \cite{Bahouri_Gerard} (see also \cite{Bahouri_Shatah}),
a simple argument then shows that $w\in L^q_t(\R; L^r_x(\R^d))$
for any $\dot{H}^1(\R^d)$-wave admissible pair $(q,r)$. 
In particular,
$w\in X(\R\times\R^d)$.

Next, in order to prove the global space-time bound \eqref{bound_w},
we use 
a concentration-compactness argument adapted to dimensions
$d=4$ and $5$.
More precisely, we use a profile decomposition theorem 
for solutions of the energy-critical defocusing nonlinear wave equation on $\R^d$,
$d=4$ and $5$.
This theorem states that if $\{(\phi_{0,n},\phi_{1,n})\}_{n\in\N}$
is a bounded sequence in $\dot{H}^1(\R^d)\times L^2(\R^d)$,
then for every $\ell\geq 1$, the corresponding
solutions $w_n$ of \eqref{NLW} with initial data 
$(w_n,\partial_t w_n)|_{t=0}=(\phi_{0,n},\phi_{1,n})$
can be decomposed (on a subsequence) into:
\begin{equation}\label{decomposition}
w_n(t,x)=\sum_{j=1}^\ell \frac{1}{(\lambda_n^{(j)})^{\frac{d-2}{2}}}U^{(j)}\left(\frac{t-t_n^{(j)}}{\lambda_n^{(j)}}, \frac{x-x_n^{(j)}}{\lambda_n^{(j)}}\right)+r_n^{(\ell)}(t,x),
\end{equation}
where $U^{(j)}$ are some solutions of \eqref{NLW} with initial data in $\dot{H}^1(\R^d)\times L^2(\R^d)$,
$\lambda_n^{(j)}>0$, $t_n^{(j)}\in\R$, $x_n^{(j)}\in\R^d$, and $\limsup_{n\to\infty}\|r_n^{(\ell)}\|_{X(\R\times\R^d)}\to 0$ as $\ell\to\infty$. 

In the case of dimension $d=3$, 
the analogue of the above profile decomposition theorem was proved by 
Bahouri and G\'erard in \cite{Bahouri_Gerard}.
We point out that the extension to dimensions $d=4$ and $5$ does not pose any difficulty 
and simply consists in changing the numerology, 
as dictated by the dimension-dependent Sobolev embeddings and Strichartz estimates. 
In dimension $d=5$, some additional care is needed since the 
degree of the nonlinearity $|w|^{\frac 43}w$
is not an integer. 
 
The global space-time bound \eqref{bound_w}
then follows by a contradiction argument, as in the proof of Corollary 2 in \cite{Bahouri_Gerard}.
Indeed, let us assume that there exists a sequence of solutions $\{w_n\}_{n\in\N}$
such that 
\[\sup_{n\in\N}E(w_n)<\infty, \quad \quad \lim_{n\to\infty}\|w_n\|_{X(\R\times \R^d)}=\infty.\] 
Since $\sup_{n\in\N}E(w_n)<\infty$, we can apply the above profile decomposition
theorem to the sequence $\{w_n\}_{n\in\N}$.
In particular, it follows by \eqref{decomposition} that $\{w_n\}_{n\in\N}$
is bounded in $X(\R\times\R^d)$. 
This is a contradiction, and hence \eqref{bound_w} holds.

See also \cite[Lemma 4.3, Corollary 4.5]{KM} for slightly different versions of the profile
decomposition theorem and of the global space-time bound \eqref{bound_w} in dimensions $d=3,4,5$.
\end{proof}


We end this section with the proof of Proposition \ref{prop:main}.

\begin{proof}[Proof of Proposition \ref{prop:main}]
We prove that
any solution $v$ of \eqref{v_data} defined on $[t_0,t_0+\tau_\ast]$, if it exists,
satisfies the a priori  bound
\begin{align}
\label{apriori}
\|(v,\pa_t v)\|_{L^\infty_t([t_0,t_0+\tau_\ast]; \dot{H}^1_x(\R^d)\times L^2_x(\R^d))}
+\|v\|&_{L^q_t([t_0,t_0+\tau_\ast]; L^r_x(\R^d))}\notag\\
&\leq C\left(\|(v_0,v_1)\|_{\dot{H}^1(\R^d)\times L^2(\R^d)}\right), 
\end{align}
for all $\dot{H}^1(\R^d)$-wave admissible pairs $(q,r)$
and for $C(\cdot)$ a
positive non-decreasing function,
provided that $0<\tau_\ast\leq \tau\big(\|(v_0,v_1)\|_{\dot{H}^1(\R^d)\times L^2(\R^d)}, K,\gamma\big)$
and $f$ satisfies \eqref{f}.
This, together with Proposition \ref{deterministic_LWP}
and Lemma \ref{blowup}, 
shows that there exists
a unique solution $(v,\pa_t v)\in C([t_0, t_0+\tau_\ast]; \dot{H}^1(\R^d)\times L^2(\R^d))$ satisfying the bound \eqref{apriori}.
Indeed, since
\[\|S(t-t_0)(v_0,v_1)\|_{X(\R\times \R^d)}\lesssim \|v_0\|_{\dot{H}^1(\R^d)}+\|v_1\|_{L^2(\R^d)}<\infty,\]
one can always find a small time interval
$\mathcal I_0=[t_0,t_1)\subset [t_0,t_0+\tau_\ast]$ on which 
the conditions of Proposition \ref{deterministic_LWP}
are satisfied. As a consequence, there exists a solution 
$(v,\pa_t v)\in C(\mathcal I_0;\dot{H}^1(\R^d)\times L^2(\R^d))$ of \eqref{v_data}, unique 
in the ball $B_{a}(\mathcal I_0)$ of $X(\mathcal I_0\times\R^d)$, where $a$ is sufficiently small. 
Furthermore, by the a priori bound \eqref{apriori} 
and Lemma \ref{blowup},
it follows that the solution $v$ can be extended to some interval 
$\mathcal I_1=[t_1,t_2)\subset [t_0,t_0+\tau_\ast]$ and the extension is unique in $B_{a}(\mathcal I_1)$. 
On the interval $\mathcal I_0 \cup \mathcal I_1$, $v$ still satisfies the a priori bound \eqref{apriori}
and hence it can be further extended. Arguing recursively,
it follows that we can extend the solution $v$ as long as \eqref{apriori} is satisfied.
Hence, we can define it on the whole interval $[t_0,t_0+\tau_\ast]$.
Moreover, there exist disjoint intervals $\{\mathcal I_n\}_{n\in\N}$
with $[t_0,t_0+\tau_\ast]=\bigcup_{n\in\N} \mathcal I_n$
such that $v$ is unique in $B_{a}(\mathcal I_n)$ for all $n\in\N$.

In the following, we focus on finding $\tau=\tau \big(\|(v_0,v_1)\|_{\dot{H}^1(\R^d)\times L^2(\R^d)}, K,\gamma\big)$ such that the a priori bound \eqref{apriori} holds for $\tau_\ast = \tau $,
provided \eqref{f}
holds for this value of $\tau_\ast$.
It will be clear from the proof below that \eqref{f} also implies \eqref{apriori}
for all $0<\tau_\ast<\tau$.
By Duhamel's formula \eqref{Duhamel}, Strichartz estimates \eqref{Strichartz}, and \eqref{f}, we have 
\begin{align}\label{vqr}
\|(v,\pa_t v)\|&_{L^\infty_t([t_0,t_0+\tau]; \dot{H}^1_x(\R^d)\times L^2_x(\R^d))}
+\|v\|_{L^q_t([t_0,t_0+\tau]; L^r_x(\R^d))}\notag\\
&\lesssim \|(v_0, v_1)\|_{\dot{H}^1(\R^d)\times L^2(\R^d)}
+\big\|F(v+f)\big\|_{L^{1}_t([t_0,t_0+\tau]; L^{2}_x(\R^d))}\notag\\
&\lesssim \|v_0\|_{\dot{H}^1(\R^d)}+\|v_1\|_{L^2(\R^d)}+ \|v\|^{\frac{d+2}{d-2}}_{X([t_0,t_0+\tau]\times\R^d)}
+ C(K\tau^{\gamma})^{\frac{d+2}{d-2}}
\end{align} 
for all $\dot{H}^1(\R^d)$-wave admissible pairs $(q,r)$.
As a consequence, 
in order to obtain \eqref{apriori},
it is sufficient to show that 
\[\|v\|_{X([t_0,t_0+\tau]\times\R^d)}\leq C\left(\|(v_0,v_1)\|_{\dot{H}^1(\R^d)\times L^2(\R^d)}\right),\]

\noindent
with $C(\cdot)$ a positive non-decreasing function.

Let $w$ be the solution of
the energy-critical defocusing nonlinear wave equation on $\R\times\R^d$
with the same initial conditions as $v$:
\begin{equation*}
\begin{cases}
\pa_{t}^2w-\Delta w+F(w)=0\\
w(t_0)=v_0, \quad \pa_t w(t_0)=v_1.
\end{cases}
\end{equation*}
By \eqref{bound_w},
we have that $\|w\|_{X(\R\times\R^d)}<C\big(\|(v_0,v_1)\|_{\dot{H}^1(\R^d)\times L^2(\R^d)}\big)$. 
Then, we divide 
$\R$ into $J=J\big(\|(v_0,v_1)\|_{\dot{H}^1(\R^d)\times L^2(\R^d)},\eta\big)$ subintervals $I_j=[t_j, t_{j+1}]$ such that
\[\|w\|_{X(I_j\times\R^d)}\sim\eta\]
for some small $\eta>0$ to be chosen later.

Let $\tau>0$ to be chosen later.
We write $[t_0,t_0+\tau]=\cup_{j=0}^{J'-1}([t_0,t_0+\tau]\cap I_j)$
for some $J'\leq J$, where $[t_0,t_0+\tau]\cap I_j\neq \emptyset$ for $0\leq j\leq J'-1$. 

Since the nonlinear evolution of $w$
on each $I_j$ is small, it follows that so is the linear evolution 
$S(t-t_j)(w(t_j),\pa_t w(t_j))$. Indeed, recall first Duhamel's formula
\[w(t)=S(t-t_j)(w(t_j),\pa_t w(t_j))-\int_{t_j}^t\frac{\sin((t-t')|\nabla|)}{|\nabla|}F\big(w(t')\big)dt' \text{ for } t\in I_j.\]
Then, by the Strichartz estimates \eqref{Strichartz}, we have that
\begin{align}
\|S(t-t_j)(w(t_j),\pa_t w(t_j))\|_{X(I_j\times\R^d)}
&\leq \|w\|_{X(I_j\times\R^d)}+C\|F(w)\|_{L^{1}_t(I_j; L^{2}_x(\R^d))}\label{linear_w}\\
&\leq \|w\|_{X(I_j\times\R^d)}+C\|w\|_{X(I_j\times\R^d)}^{\frac{d+2}{d-2}}
\leq \eta+C\eta^{\frac{d+2}{d-2}}\leq 2\eta,\notag 
\end{align} 
for $j=0,1,\dots, J'-1$ and $\eta$ sufficiently small.

In the following, we use the Perturbation Lemma \ref{lemma:perturbation}
to show that, on each interval $I_j$,
$v-w$ is small in the $L^\infty_t\big(I_j; \dot{H}^1_x(\R^d)\times L^2_x(\R^d)\big)$-norm,
as well as in Strichartz norms.
We first estimate $v$ on the interval $I_0$. Arguing as before and using $(v(t_0),\pa_t v(t_0))=(w(t_0),\pa_tw(t_0))$, we obtain that
\begin{align}
\|v\|_{X(I_0\times\R^d)}&\leq \|S(t-t_0)(w(t_0),\pa_t w(t_0))\|_{X(I_0\times\R^d)}+C\|v\|_{X(I_0\times\R^d)}^{\frac{d+2}{d-2}}+C\|f\|_{X(I_0\times\R^d)}^{\frac{d+2}{d-2}}\notag\\
&\leq 2\eta+C\|v\|_{X(I_0\times\R^d)}^{\frac{d+2}{d-2}}+C(K\tau^{\gamma})^{\frac{d+2}{d-2}}\label{vI_0}.
\end{align}
By taking $\eta\ll 1$ sufficiently small and $K\tau^\gamma\ll 1$, it follows by a standard continuity argument that $\|v\|_{X(I_0\times\R^d)}\leq 3\eta+CK\tau^\gamma$.
Furthermore, by taking 
$\tau=\tau (K,\gamma,\eta)$
sufficiently small such that
\begin{equation}
\label{first_tau}
CK\tau^\gamma\leq \eta,
\end{equation} 
we obtain that 
\[\|v\|_{X(I_0\times\R^d)}\leq 4\eta.\]
Thus, condition \eqref{vM} in Lemma \ref{lemma:perturbation}
is satisfied on $I_0$ with $M=4\eta$.
We are thus left with estimating the error
$e:=F(v+f)-F(v)$. 
First, consider $\eps_0$ to be chosen later
such that
$0<\eps_0<\tilde{\eps}(4\eta)$,
where $\tilde{\eps}(4\eta)$ is as in Lemma \ref{lemma:perturbation}.
As above,
we have that
\begin{align*}
\|e\|_{L^{1}_t(I_0; L^{2}_x(\R^d))}&\leq C\|v\|_{X(I_0\times\R^d)}^{\frac{4}{d-2}}\|f\|_{X(I_0\times\R^d)}+C\|f\|_{X(I_0\times\R^d)}^{\frac{d+2}{d-2}}
\leq C\eta^{\frac{4}{d-2}}K\tau^\gamma+C(K\tau^{\gamma})^{\frac{d+2}{d-2}}\\
&\leq CK\tau\leq \eps_0,
\end{align*}
provided we choose $\tau=\tau(K, \gamma, \eta, \eps_0)$ 
sufficiently small such that $CK\tau\leq\eps_0$. 
Thus,  condition \eqref{e2} is satisfied on $I_0$.

Applying the Perturbation Lemma \ref{lemma:perturbation} on the interval $I_0$,
we then deduce that 
\[\sup_{t\in I_0}\big\|(v-w, \pa_t v-\pa_t w)\big\|_{\dot{H}^1_x(\R^d)\times L^2_x(\R^d)}\leq C(4\eta)\eps_0.\]
In particular,
\begin{equation}\label{t_1}
\big\|\big(v(t_1)-w(t_1), \pa_t v(t_1)-\pa_t w(t_1)\big)\big\|_{\dot{H}^1_x(\R^d)\times L^2_x(\R^d)}\leq C(4\eta)\eps_0=:\eps_1.
\end{equation}
Then, proceeding as in \eqref{vI_0} and using \eqref{linear_w} and \eqref{first_tau}, we have that
\begin{align*}
\|v\|_{X(I_1\times\R^d)}&\leq \|S(t-t_1)(w(t_1),\pa_t w(t_1))\|_{X(I_1\times\R^d)}\\
&+\|S(t-t_1)((v-w)(t_1),(\pa_t v-\pa_t w)(t_1))\|_{X(I_1\times\R^d)}\\
&+C\|v\|_{X(I_1\times\R^d)}^{\frac{d+2}{d-2}}+C\|f\|_{X(I_1\times\R^d)}^{\frac{d+2}{d-2}}\leq 2\eta+\eps_1+C\|v\|_{X(I_1\times\R^d)}^{\frac{d+2}{d-2}}+C(K\tau^{\gamma})^{\frac{d+2}{d-2}}\\
&\leq 2\eta+\eps_1+C\|v\|_{X(I_1\times\R^d)}^{\frac{d+2}{d-2}}+\eta^{\frac{d+2}{d-2}}.
\end{align*}
Therefore, imposing that $\eps_1=\eps_1(\eta)<\eta$ 
and fixing $\eta\ll 1$
sufficiently small,
we have by a standard continuity argument that 
\[\|v\|_{X(I_1\times\R^d)}\leq 4\eta.\]
Thus, condition \eqref{vM} in Lemma \ref{lemma:perturbation} is satisfied on $I_1$ with $M=4\eta$. 
Then, by \eqref{t_1}, condition \eqref{initial2} is satisfied on the interval $I_1$ if we choose $\eps_0$ sufficiently small such that 
$\eps_1= C(4\eta)\eps_0< \tilde{\eps}( 4\eta)$. 
In what concerns the error, we have as above
\begin{align*}
\|e\|_{L^{1}_t(I_1; L^2_x(\R^d))}&\leq C\|v\|_{X(I_1\times\R^d)}^{\frac{4}{d-2}}\|f\|_{X(I_1\times\R^d)}
+C\|f\|_{X(I_1\times\R^d)}^{\frac{d+2}{d-2}}
\leq C\eta^{\frac{4}{d-2}}K\tau^\gamma+C(K\tau^{\gamma})^{\frac{d+2}{d-2}}\\
&\leq CK\tau\leq\eps_1,
\end{align*}
for $\tau=\tau(K, \gamma, \eta,\eps_0,\eps_1)$ sufficiently small. Therefore, condition \eqref{e2} is also satisfied
on $I_1$ and we can apply the Perturbation Lemma \ref{lemma:perturbation}
on this interval
 to obtain that
\[\sup_{t\in I_1}\big\|(v-w, \pa_t v-\pa_t w)\big\|_{\dot{H}^1_x(\R^d)\times L^2_x(\R^d)}\leq C(4\eta)^2\eps_0.\]

We proceed similarly for the intervals $I_2, \dots, I_{J'-1}$. On each $I_j$, $j=1,\dots, J'-1$, we impose that 
\[\eps_j:=C(4\eta)^{j}\eps_0\]
satisfies $\eps_j<\tilde{\eps}(4\eta)$
and $\e_j<\eta$.
In order to satisfy all these conditions, 
it is enough 
to fix 
\[\eps_0\left(\|(v_0,v_1)\|_{\dot{H}^1(\R^d)\times L^2(\R^d)}, \eta\right)=\frac 12\frac{\min (\eta, \tilde{\eps}(4\eta))}{C(4\eta)^{J\big(\|(v_0,v_1)\|_{\dot{H}^1(\R^d)\times L^2(\R^d)},\eta\big)}}.\] 
Furthermore, for all $j=0,1,\dots, J'-1$, we impose the condition
\begin{align*}
\|e\|_{L^1_t(I_j; L^2_x(\R^d))}\leq CK\tau\leq\eps_j.
\end{align*}
To satisfy this, we fix $\tau=\tau\big(\|(v_0,v_1)\|_{\dot{H}^1(\R^d)\times L^2(\R^d)}, K,\gamma\big)$ such that 
\[CK\tau\leq\min\{\eta,\eps_0,\eps_1,\dots,\eps_{J-1}\}=\min \left\{\eta, \eps_0\left(\|(v_0,v_1)\|_{\dot{H}^1(\R^d)\times L^2(\R^d)},\eta\right)\right\}.\]

\noindent
Since $J\big(\|(v_0,v_1)\|_{\dot{H}^1(\R^d)\times L^2(\R^d)},\eta\big)$
is non-decreasing in $\|(v_0,v_1)\|_{\dot{H}^1(\R^d)\times L^2(\R^d)}$,
we notice easily that $\tau$ can be chosen to be non-increasing in both
$\|(v_0,v_1)\|_{\dot{H}^1(\R^d)\times L^2(\R^d)}$ and $K$.

Applying the Perturbation Lemma \ref{lemma:perturbation} recursively on the intervals $I_j$, we conclude that any solution $v$ defined on $[t_0,t_0+\tau]\times\R^d$ satisfies the following a priori estimates:
\begin{align*}
\|v\|_{X([t_0,t_0+\tau]\times\R^d)}\leq 4\eta J'\left(\|(v_0,v_1)\|_{\dot{H}^1(\R^d)\times L^2(\R^d)}\right)\leq C\big(\|(v_0,v_1)\|_{\dot{H}^1(\R^d)\times L^2(\R^d)}\big),\\
\sup_{t\in [t_0,t_0+\tau]}\left\|\left((v-w)(t),(\pa_t v-\pa_t w)(t)\right)\right\|_{\dot{H}^1_x(\R^d)\times L^2_x(\R^d)}
+\|v-w\|_{L^q_t([t_0,t_0+\tau]; L^r_x(\R^d))}
\leq\tilde{\eps}(4\eta),
\end{align*}

\noindent
where $C(\cdot)$ is a positive non-decreasing function.
Combining this with \eqref{vqr} and \eqref{first_tau} yields the estimate \eqref{apriori}.

\end{proof}


\section{Almost sure global existence and uniqueness}

The goal of this section is to prove the main result of the paper, namely Theorem \ref{main}.
We start by stating and proving a probabilistic energy bound.
The conclusion of Theorem \ref{main} then follows from 
Theorem \ref{GWP}, Corollary \ref{GWPcor}, and Proposition \ref{invariant_Sigma} below.

We first recall a nonlinear Gronwall's inequality that will be useful in proving
the probabilistic energy bound for $d=5$. See, for example, \cite[Theorem 1, p.~360]{MPK}
for more details.
\begin{lemma}[A nonlinear Gronwall's inequality]
\label{Gronwall}
Fix $T>0$, $c\geq 0$, and $0\leq \alpha <1$.  Let $u$ and $b$ be two non negative continuous functions defined on $[0,T]$ 
and satisfying the estimate 
\[u(t)\leq c+ \int_0^t b(s)u^\alpha(s)ds,\]
for all $t\in [0,T]$. Then,
\begin{align*}
u(t)\leq \left(c^{1-\alpha}+(1-\alpha)\int_0^t b(s)ds\right)^{\frac{1}{1-\alpha}}
\end{align*}
for all $t\in [0,T]$.
\end{lemma}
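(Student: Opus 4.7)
The plan is to follow the standard comparison argument for Bihari-type inequalities. First I would introduce the auxiliary function
\[ v(t) := c + \int_0^t b(s) u^\alpha(s)\, ds, \]
so that $u(t) \leq v(t)$ on $[0,T]$ by hypothesis, and $v$ is non-decreasing, continuous, and differentiable a.e. with $v'(t) = b(t) u^\alpha(t)$. Because $u^\alpha$ is monotone in its argument for $\alpha \geq 0$ and $u \leq v$, we obtain the differential inequality $v'(t) \leq b(t)\, v^\alpha(t)$.

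Assuming first that $c > 0$ (so that $v(t) \geq c > 0$ throughout $[0,T]$), I would divide by $v^\alpha(t)$ and recognize the left-hand side as a derivative:
\[ \frac{d}{dt}\!\left(\frac{v^{1-\alpha}(t)}{1-\alpha}\right) = \frac{v'(t)}{v^\alpha(t)} \leq b(t). \]
Integrating from $0$ to $t$ and using $v(0) = c$ gives
\[ v^{1-\alpha}(t) \leq c^{1-\alpha} + (1-\alpha)\int_0^t b(s)\, ds, \]
and raising both sides to the power $1/(1-\alpha)$ (valid since $1-\alpha > 0$ and the right-hand side is non-negative) yields the desired bound for $v$, hence for $u$.

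The only delicate point is the degenerate case $c = 0$, where $v$ might vanish at $t=0$ and the division by $v^\alpha$ is not immediately legitimate. To handle this I would apply the above argument with $c$ replaced by $c + \varepsilon$ for arbitrary $\varepsilon > 0$, obtaining
\[ u(t) \leq \left((c+\varepsilon)^{1-\alpha} + (1-\alpha)\int_0^t b(s)\, ds\right)^{\frac{1}{1-\alpha}}, \]
and then pass to the limit $\varepsilon \to 0^+$ using continuity of $x \mapsto x^{1/(1-\alpha)}$. I do not anticipate a serious obstacle: the only subtlety is the vanishing case, and the $\varepsilon$-regularization cleanly resolves it. The rest is just calculus and monotonicity.
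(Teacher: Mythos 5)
Your proof is correct. The paper does not actually prove this lemma --- it simply cites \cite[Theorem 1, p.~360]{MPK} --- and your argument (the standard Bihari-type comparison: majorize $u$ by the non-decreasing $v$, derive $v' \leq b\,v^\alpha$, integrate $v^{-\alpha}v'$, and handle $c=0$ by an $\eps$-regularization) is exactly the classical proof one finds in that reference. No gaps.
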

%

Following the same lines as in 
\cite[Proposition 2.2]{BT3}, we show that the energy $E(v^\omega)$ of the nonlinear part $v^\omega$ of the solution $u^\omega$
 is almost surely bounded for solutions $v^\omega$
of the equation \eqref{v}.


\begin{proposition}[Probabilistic energy bound]\label{prop:energy}
Let $d=4$ or $5$ and $0<\eps\ll 1$.
Let $(u_0, u_1)\in H^{s}(\R^d)\times H^{s-1}(\R^d)$,
with $0<s\leq 1$ if $d=4$, and $0\leq s\leq 1$ if $d=5$,
and let 
$(u_0^\omega, u_1^\omega)$ be the randomization
defined in \eqref{R1}, satisfying \eqref{cond}. 
Given $1\leq T<\infty$, 
let $v^\omega$ be a solution of the 
Cauchy problem \eqref{v} on $[0,T]$. 
Then, there exists a set $\tilde{\Omega}_{T,\eps}\subset\Omega$
with $P(\tilde{\Omega}_{T,\eps}^c)<\frac{\eps}{2}$, such that for all $t\in [0,T]$ and all $\omega\in\tilde{\Omega}_{T,\eps}$,
we have that 
\[E(v^\omega(t))\leq C\left(T,\eps,\|(u_0,u_1)\|_{H^s(\R^d)\times H^{s-1}(\R^d)}\right),\]
and thus also
\begin{equation}\label{L2_apriori}
\big\|(v^\omega, \pa_t v^\omega)\big\|_{L^\infty_t([0,T]; H^1_x(\R^d)\times L^2_x(\R^d))}\leq C\left(T,\eps,\|(u_0,u_1)\|_{H^s(\R^d)\times H^{s-1}(\R^d)}\right),
\end{equation}
where $C\left(T,\eps,\|(u_0,u_1)\|_{H^s(\R^d)\times H^{s-1}(\R^d)}\right)$ is a constant depending only on $T$, $\eps$, and $\|(u_0, u_1)\|_{H^s(\R^d)\times H^{s-1}(\R^d)}$.
\end{proposition}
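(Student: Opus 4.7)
The plan is to adapt the probabilistic energy estimate of Burq and Tzvetkov \cite{BT3} to our setting, deriving a differential inequality for $E(v^\omega(t))$ whose coefficients are spatial norms of the linear evolution $z^\omega = S(t)(u_0^\omega, u_1^\omega)$, then closing this inequality with Gronwall and controlling those norms on a large-probability set via Proposition \ref{proba_S}.

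First I would multiply the equation \eqref{v} by $\partial_t v^\omega$ and integrate in space. Writing $F(v^\omega + z^\omega) = F(v^\omega) + [F(v^\omega + z^\omega) - F(v^\omega)]$, the first piece combines with the kinetic and gradient terms to form $\tfrac{d}{dt} E(v^\omega)$, giving the identity
$$\frac{d}{dt} E(v^\omega(t)) = - \int_{\R^d} [F(v^\omega + z^\omega) - F(v^\omega)] \, \partial_t v^\omega \, dx,$$
with $E(v^\omega(0))=0$ since $(v^\omega, \partial_t v^\omega)|_{t=0} = (0,0)$. For $d=4$ the nonlinearity is cubic, so the difference expands as $3(v^\omega)^2 z^\omega + 3v^\omega (z^\omega)^2 + (z^\omega)^3$, and I would bound the three resulting integrals by Hölder in $L^\infty_x \cdot L^4_x \cdot L^4_x \cdot L^2_x$, $L^8_x \cdot L^8_x \cdot L^4_x \cdot L^2_x$, and $L^6_x \cdot L^6_x \cdot L^6_x \cdot L^2_x$ respectively, combined with the critical Sobolev embedding $\dot H^1(\R^4) \hookrightarrow L^4(\R^4)$ yielding $\|v^\omega\|_{L^4_x} \lesssim E(v^\omega)^{1/4}$ and $\|\partial_t v^\omega\|_{L^2_x} \lesssim E(v^\omega)^{1/2}$. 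This produces
$$\tfrac{d}{dt} E(v^\omega) \leq C \bigl( \|z^\omega\|_{L^\infty_x} E(v^\omega) + \|z^\omega\|_{L^8_x}^2 E(v^\omega)^{3/4} + \|z^\omega\|_{L^6_x}^3 E(v^\omega)^{1/2} \bigr).$$
For $d=5$, where $F(u) = |u|^{4/3} u$ is not polynomial, I would use the pointwise bound \eqref{Fbasic} to estimate $|F(v^\omega + z^\omega) - F(v^\omega)| \lesssim |z^\omega|\, |v^\omega|^{4/3} + |z^\omega|^{7/3}$; Hölder combined with the embedding $\dot H^1(\R^5) \hookrightarrow L^{10/3}(\R^5)$ then gives
$$\tfrac{d}{dt} E(v^\omega) \leq C \bigl( \|z^\omega\|_{L^{10}_x} E(v^\omega)^{9/10} + \|z^\omega\|_{L^{14/3}_x}^{7/3} E(v^\omega)^{1/2} \bigr).$$

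Setting $\eta(t) := E(v^\omega(t)) + 1 \geq 1$ lets me absorb all subcritical powers of $E$ and reduce both cases to a linear Gronwall inequality of the form $\eta'(t) \leq \Phi^\omega(t)\, \eta(t)$, from which
$$E(v^\omega(t)) + 1 \leq \exp \Bigl( C \int_0^T \Phi^\omega(t')\, dt' \Bigr),$$
where $\Phi^\omega$ is a sum of powers of spatial norms of $z^\omega$; alternatively, the $d=5$ case can be closed directly via the nonlinear Gronwall inequality of Lemma \ref{Gronwall}. To finish, I would apply Proposition \ref{proba_S} to each summand of $\Phi^\omega$: for $d=4$, part (ii) controls $\int_0^T \|z^\omega\|_{L^8_x}^2\, dt$ and $\int_0^T \|z^\omega\|_{L^6_x}^3\, dt$ with only the assumption $u_0 \in L^2, u_1 \in H^{-1}$, while part (iii) controls $\int_0^T \|z^\omega\|_{L^\infty_x}\, dt$ -- and it is precisely here that the condition $s>0$ enters, through the Sobolev embedding $W^{s,r} \hookrightarrow L^\infty$ used in the proof of part (iii); for $d=5$, part (ii) suffices for both summands, so $s=0$ is admissible. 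Intersecting the resulting large-probability events (each with complementary probability less than $\eps/8$, say) produces $\tilde\Omega_{T,\eps}$ on which $E(v^\omega(t))$ is bounded by a constant $C(T, \eps, \|(u_0,u_1)\|_{H^s \times H^{s-1}})$; the $L^2$-control of $v^\omega$ needed to upgrade to \eqref{L2_apriori} then follows from the trivial bound $\|v^\omega(t)\|_{L^2_x} \leq \int_0^t \|\partial_t v^\omega(s)\|_{L^2_x}\, ds \lesssim T\, E(v^\omega)^{1/2}$. The main obstacle will be arranging the Hölder splits so that every $z^\omega$-norm entering $\Phi^\omega$ is controllable at the sharp regularity threshold: the unavoidable appearance of $\|z^\omega\|_{L^\infty_x}$ in the $(v^\omega)^2 z^\omega$ term when $d=4$ is what forces $s>0$ in that dimension, while the more favorable exponents in $d=5$ require only spatial $L^p$ norms with finite $p$ and thus admit $s=0$.
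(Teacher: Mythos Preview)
Your proposal is correct and follows essentially the same approach as the paper: differentiate the energy, isolate the forcing $F(v^\omega+z^\omega)-F(v^\omega)$, control it via H\"older in terms of spatial norms of $z^\omega$ and powers of $E(v^\omega)$, close with Gronwall, and bound the $z^\omega$-norms on a large-probability set via Proposition~\ref{proba_S}. The only differences are cosmetic. The paper uses the cruder two-term bound from \eqref{Fbasic} (namely $|z^\omega|\,|v^\omega|^{4/(d-2)}+|z^\omega|^{(d+2)/(d-2)}$) rather than your full cubic expansion for $d=4$, so it never sees your intermediate $\|z^\omega\|_{L^8_x}^2 E^{3/4}$ term; and the paper differentiates $\sqrt{E}$ directly---obtaining for $d=4$ a linear ODE in $\sqrt{E}$ with forcing, and for $d=5$ the inequality $\tfrac{d}{dt}\sqrt{E}\lesssim \|z^\omega\|_{L^{14/3}_x}^{7/3}+\|z^\omega\|_{L^{10}_x}(\sqrt{E})^{4/5}$ handled by Lemma~\ref{Gronwall}---whereas you add $1$ to $E$ and reduce everything to linear Gronwall. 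Both routes are equivalent; your identification of the $L^\infty_x$-norm of $z^\omega$ as the source of the $s>0$ restriction in $d=4$, and of its absence in $d=5$, matches the paper exactly.
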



\begin{proof}
Taking the time derivative of the energy, we obtain that
\begin{align*}
\frac{d}{dt}E\big(v^\omega(t)\big)&=\int_{\R^d}\pa_tv^\omega\pa_{t}^2v^\omega+\nabla\pa_t v^\omega\cdot \nabla v^\omega+F\big(v^\omega)\pa_t v^\omega dx\\
&=\int_{\R^d}\pa_t v^\omega\Big(\pa_{t}^2v^\omega-\Delta v^\omega+F\big(v^\omega\big)\Big)dx=\int_{\R^d}\pa_t v^\omega\Big[F\big(v^\omega)-F\big(z^\omega+v^\omega\big)\Big]dx.
\end{align*}

\noindent
Using the Cauchy-Schwarz and H\"older's inequalities, it then follows that
\begin{align}\label{Egen}
\left|\frac{d}{dt}E\big(v^\omega(t)\big)\right|&\leq C \Big(E\big(v^\omega(t)\big)\Big)^{\frac12}\Big\|F\big(v^\omega\big)-F\big(z^\omega+v^\omega\big)\Big\|_{L^2_x(\R^d)}\notag\\
&\leq C\Big(E\big(v^\omega(t)\big)\Big)^{\frac12}\left(\big\|z^\omega\big\|_{L^{\frac{2(d+2)}{d-2}}_x(\R^d)}^{\frac{d+2}{d-2}}+\big\|\left|z^\omega\right|\left|v^\omega\right|^{\frac{4}{d-2}}\big\|_{L^2_x(\R^d)}\right).
\end{align}

We first consider the case $d=4$.
By \eqref{Egen}, we have that
\begin{align*}
\left|\frac{d}{dt}E\big(v^\omega(t)\big)\right|&\leq C\Big(E\big(v^\omega(t)\big)\Big)^{\frac12}\Big(\big\|z^\omega\big\|_{L^6_x(\R^4)}^3+\big\|z^\omega\big\|_{L^\infty_x(\R^4)}\big\|v^\omega\big\|_{L^4_x(\R^4)}^2\Big)
\end{align*}
and, therefore
\begin{align}\label{basic_energy}
\left|\frac{d}{dt}\Big(E\big(v^\omega(t)\big)\Big)^{\frac12}\right|&\leq C \big\|z^\omega\big\|_{L^6_x(\R^4)}^3+C\big\|z^\omega\big\|_{L^\infty_x(\R^4)}\Big(E\big(v^\omega(t)\big)\Big)^{\frac12}.
\end{align}

\noindent
Integrating this from $t=0$ to $t\leq T$, we then obtain that
\begin{align*}
\Big(E\big(v^\omega(t)\big)\Big)^{\frac12}&\leq C \big\|z^\omega\big\|_{X([0,T]\times \R^4)}^3+C\int_0^t\big\|z^\omega(s)\big\|_{L^\infty_x(\R^4)}\Big(E\big(v^\omega(s)\big)\Big)^{\frac12}ds.
\end{align*}
Then, by Gronwall's inequality,
it follows that
\begin{align}\label{energy_estim}
\Big(E\big(v^\omega(t)\big)\Big)^{\frac12}&\leq C \big\|z^\omega\big\|_{X([0,T]\times\R^4)}^3e^{C\|z^\omega\|_{L^1_t([0,T];L_x^\infty(\R^4))}}.
\end{align}

Next, we consider $K_1, K_2>0$ such that $C\exp(-cK_1^2)+C\exp(-cK_2^2)<\frac{\eps}{2}$,
where $C,c>0$ are such that both the estimates in Proposition \ref{proba_S} (ii) for $(q,r)=(3,6)$ and in Proposition \ref{proba_S} (iii) for $(1,\infty)$ hold with $C,c$.
Then, for $0<s\leq 1$, by Proposition \ref{proba_S} (ii) and (iii)
with $\l=K_1 T^{\frac43} (\|u_0\|_{L^2}+\|u_1\|_{H^{-1}})$ and $\l=K_2 T^2 (\|u_0\|_{H^{s}}+\|u_1\|_{H^{s-1}})$ respectively, we obtain that
there exists $\tilde{\Omega}_{T,\eps}(\R^4)\subset \Omega$
with $P(\tilde{\Omega}_{T,\eps}^c(\R^4))<\frac{\eps}{2}$
such that
\begin{align*}
\|z^\omega\|_{X([0,T]\times \R^4)}&\leq K_1 T^{\frac 43} \left(\|u_0\|_{L^2(\R^4)}+\|u_1\|_{H^{-1}(\R^4)}\right),\\
\|z^\omega\|_{L^1_t([0,T];L^\infty_x(\R^4))}&\leq K_2  T^2 \left(\|u_0\|_{H^{s}(\R^4)}+\|u_1\|_{H^{s-1}(\R^4)}\right)
\end{align*} 
for any $\omega\in \tilde{\Omega}_{T,\eps}(\R^4)$.
Therefore, by \eqref{energy_estim}, we conclude for $d=4$ that
\begin{align*}
E(v^\omega(t))&\leq \left(K_1^3 T^4 \left(\|u_0\|_{L^2(\R^4)}+\|u_1\|_{H^{-1}(\R^4)}\right)^3e^{CK_2 T^2 \left(\|u_0\|_{H^{s}(\R^4)}+\|u_1\|_{H^{s-1}(\R^4)}\right)}\right)^2\\
&\leq C\left(T,\eps,\|(u_0,u_1)\|_{H^s(\R^4)\times H^{s-1}(\R^4)}\right), 
\end{align*}
for all $t\in [0,T]$ and all $\omega\in \tilde{\Omega}_{T,\eps}(\R^4)$.
Notice that, for $d=4$, we used Proposition \ref{proba_S} (iii), which requires $s>0$.

We now turn to the case $d=5$.
Using \eqref{Egen}, we have that
\begin{align*}
\left|\frac{d}{dt}E\big(v^\omega(t)\big)\right|&\leq C\Big(E\big(v^\omega(t)\big)\Big)^{\frac12}\left(\big\|z^\omega\big\|_{L^{\frac{14}{3}}_x(\R^5)}^{\frac 73}+\big\|z^\omega\big\|_{L^{10}_x(\R^5)}\big\|v^\omega\big\|_{L^{\frac{10}{3}}_x(\R^5)}^{\frac 43}\right)
\end{align*}
and, therefore
\begin{align}\label{basic_energy5}
\left|\frac{d}{dt}\Big(E\big(v^\omega(t)\big)\Big)^{\frac12}\right|&\leq C \big\|z^\omega\big\|_{L^{\frac{14}{3}}_x(\R^5)}^{\frac 73}+C\big\|z^\omega\big\|_{L^{10}_x(\R^5)}\left(\Big(E\big(v^\omega(t)\big)\Big)^{\frac12}\right)^{\frac 45}.
\end{align}
Integrating from $t=0$ to $t\leq T$, we obtain
\begin{align*}
\Big(E\big(v^\omega(t)\big)\Big)^{\frac12}&\leq C \big\|z^\omega\big\|
_{X([0,T]\times \R^5)}^{\frac 73}+C\int_0^t\big\|z^\omega(s)\big\|_{L^{10}_x(\R^5)}\left(\Big(E\big(v^\omega(s)\big)\Big)^{\frac12}\right)^{\frac 45}ds.
\end{align*}
Then, by the nonlinear Gronwall's inequality in Lemma \ref{Gronwall} with $\alpha=\frac 45$, 
it follows that:
\begin{align*}
\Big(E\big(v^\omega(t)\big)\Big)^{\frac12}&\leq C \big\|z^\omega\big\|
_{X([0,T]\times \R^5)}^{\frac 73}+C\|z^\omega\|_{L^1_t([0,T];L^{10}_x(\R^5))}^{5}.
\end{align*}
Applying Proposition \ref{proba_S} (ii) as above, it follows that there exists a set $\tilde\Omega_{T,\eps}(\R^5)\subset \Omega$ with $P(\tilde\Omega_{T,\eps}^c(\R^5))=0$
such that
\begin{align*}
\|z^\omega\|_{X([0,T]\times \R^5)}&\leq K_1T^{\frac{10}{7}} \left(\|u_0\|_{L^2(\R^5)}+\|u_1\|_{H^{-1}(\R^5)}\right),\\
\|z^\omega\|_{L^1_t([0,T],L^{10}_x(\R^5))}&\leq K_2 T^2 \left(\|u_0\|_{L^2(\R^5)}+\|u_1\|_{H^{-1}(\R^5)}\right) 
\end{align*} 
for any $\omega\in \tilde{\Omega}_{T,\eps}(\R^5)$ and for some $K_1$ and $K_2$ depending on $\eps$. Therefore, for $d=5$,
we obtain that
\begin{align*}
E\big(v(t)\big)&\leq \left(K_1T^{\frac{10}{7}}\left(\|u_0\|_{L^2(\R^5)}+\|u_1\|_{H^{-1}(\R^5)}\right)\right)^{\frac{14}{3}}
+\left(K_2T^2\left(\|u_0\|_{L^2(\R^5)}+\|u_1\|_{H^{-1}(\R^5)}\right)\right)^{10}\\
&\leq C\Big(T,\eps,\|(u_0,u_1)\|_{L^2(\R^5)\times H^{-1}(\R^5)}\Big)
\end{align*}
for all $t\in [0,T]$ and all $\omega\in \tilde{\Omega}_{T,\eps}(\R^5)$.
Notice that, for $d=5$,
we only applied Proposition \ref{proba_S} (ii), which allows us to consider $(u_0,u_1)\in H^s(\R^5)\times H^{s-1}(\R^5)$ with $0\leq s\leq 1$, thus also including $s=0$.

Finally, \eqref{L2_apriori} follows by noticing that
\begin{equation*}
\big\|\big(v^\omega(t),\pa_tv^\omega(t)\big)\big\|_{\dot{H}^1_x(\R^d)\times L^2_x(\R^d)}^2
\leq 2 E(v^\omega(t))
 \leq C\left(T,\eps,\|(u_0,u_1)\|_{H^s(\R^d)\times H^{s-1}(\R^d)}\right)
\end{equation*}
and by using
\begin{align*}
\|v^\omega (t)\|_{L^2_x(\R^d)}&=\Big\|\int_0^t\pa_t v^\omega(s)ds\Big\|_{L^2_x(\R^d)} 
\leq \int_0^t \big\|\pa_t v^\omega (s)\big\|_{L^2_x(\R^d)}ds
\leq T\big\|\pa_t v^\omega\big\|_{L^\infty_t([0,T]; L^2_x(\R^d))}\\
&\leq C\left(T,\eps,\|(u_0,u_1)\|_{H^s(\R^d)\times H^{s-1}(\R^d)}\right).
\end{align*}
This concludes the proof.
\end{proof}


\begin{remark}\label{rem3D}
{\rm
The argument in the proof of Proposition \ref{prop:energy}
cannot be used to prove a probabilistic energy bound 
for the analogous problem on $\R^3$.
Indeed, from \eqref{Egen}
we see that in the proof of Proposition \ref{prop:energy}
we require the control of
$\|z^\omega |v^\omega|^{\frac{4}{d-2}}\|_{L^2(\R^d)}$
in terms of the energy $E(v^\omega)$.
We notice that:
\begin{equation}\label{3D}
\|z^\omega |v^\omega|^{\frac{4}{d-2}}\|_{L^2(\R^d)}\leq \|z^\omega\|_{L^\infty(\R^d)}\|v^\omega\|_{L^{\frac{8}{d-2}}(\R^d)}^{\frac{4}{d-2}}.
\end{equation}
The energy $E(v^\omega)$
only controls the $L^p(\R^d)$-norms of $v^\omega$
for $2\leq p\leq \frac{2d}{d-2}$. 
Therefore, in order to control the right hand-side of \eqref{3D} in terms of $E(v^\omega)$,
one needs $\frac{8}{d-2}\leq \frac{2d}{d-2}$,
that is $d\geq 4$. 
This shows that for $d=3$
a more intricate analysis is needed
 to prove 
a probabilistic energy bound.

}
\end{remark}

The main result of this section is the following theorem concerning the 
almost sure global existence and uniqueness of the energy-critical defocusing NLW in $H^{s}(\R^d)\times H^{s-1}(\R^d)$,
with $0<s\leq 1$ if $d=4$, and $0\leq s\leq 1$ if $d=5$.
\begin{theorem}[Almost sure global existence and uniqueness]\label{GWP}
Let $d=4$ or $5$.
Let $(u_0, u_1)\in H^{s}(\R^d)\times H^{s-1}(\R^d)$,
with $0<s\leq 1$ if $d=4$, and $0\leq s\leq 1$ if $d=5$,
and let 
$(u_0^\omega, u_1^\omega)$ be the randomization
defined in \eqref{R1}, satisfying \eqref{cond}. 

Then, the defocusing energy-critical NLW \eqref{NLW} on $\R^d$ admits almost surely
a unique global solution with initial data $(u_0^\omega, u_1^\omega)$ at $t=0$. More precisely,
there exists a set $\tilde{\Omega}\subset\Omega$ with $P(\tilde{\Omega})=1$
such that, for each $\omega \in \tilde{\Omega}$, there exists a unique global solution $u^\omega$ of equation \eqref{NLW} with $(u^\omega,\pa_t u^\omega)\big|_{t=0}=(u_0^\omega, u_1^\omega)$, in the class
\begin{align*}
(u^\omega,\pa_tu^\omega)&\in \Big(S(t)(u_0^\omega,u_1^\omega), \pa_t S(t)(u_0^\omega,u_1^\omega)\Big)+C\left(\R; H^1(\R^d)\times L^2(\R^d)\right)\\
&\subset C\left(\R; H^{s}(\R^d)\times H^{s-1}(\R^d)\right).
\end{align*}

Uniqueness here holds in the following sense.
The set $\tilde \Omega$
can be written as 
$\tilde \Omega=\cup_{\eps>0}\Omega_\eps$
with $P(\Omega_\eps^c)<\eps$
and
for any $\eps>0$, $\omega\in\Omega_\eps$, and $0<T<\infty$,
there exists a family of disjoint intervals $\{\mathcal{I}_n\}_{n\in\N}$
covering $[-T,T]$
such that the nonlinear part of the solution
$v^\omega=u^\omega - S(\cdot) (u_0^\omega, u_1^\omega)$ is unique in
the ball $B_{a}(\mathcal I_n)$
of $X(\mathcal I_n \times \R^d)$ for all $n\in\N$,
where $a>0$ is a small constant.
\end{theorem}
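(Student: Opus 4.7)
The plan is a probabilistic perturbation argument. Write $u^\omega = z^\omega + v^\omega$ with $z^\omega := S(t)(u_0^\omega, u_1^\omega)$, so that the nonlinear remainder $v^\omega$ solves \eqref{v}, i.e.\ the energy-critical NLW with zero Cauchy data and perturbation $f = z^\omega$. I will iterate the ``good'' deterministic local well-posedness of Proposition \ref{prop:main} on finitely many sub-intervals of $[0, T]$, using the probabilistic energy bound (Proposition \ref{prop:energy}) to control the $\dot H^1 \times L^2$-norm of $v^\omega$ at every iteration step, and the improved local-in-time Strichartz estimate (Corollary \ref{cor:proba}) to verify the smallness hypothesis on $z^\omega$. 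Time reversibility of \eqref{NLW} reduces the problem to $[0, T]$.

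Fix $T \geq 1$ and $\eps > 0$. Proposition \ref{prop:energy} provides a set $\tilde \Omega_{T, \eps/2}$ with $P(\tilde \Omega_{T, \eps/2}^c) < \eps/4$ on which any solution of \eqref{v} on $[0, T]$ satisfies
\[
\|(v^\omega(t), \pa_t v^\omega(t))\|_{\dot H^1 \times L^2} \leq A := A\big(T, \eps, \|(u_0, u_1)\|_{H^s \times H^{s-1}}\big), \quad t \in [0, T].
\]
Fix $\gamma \in \big(0, \tfrac{d-2}{d+2}\big)$ and let $\tau_0 := \tau(A, 1, \gamma)$ be produced by Proposition \ref{prop:main}. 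Partition $[0, T]$ into $N := \lceil T/\tau_0 \rceil$ intervals $I_j := [j\tau_0, (j+1)\tau_0] \cap [0, T]$. Corollary \ref{cor:proba} applied on each $I_j$ with $(q, r) = \big(\tfrac{d+2}{d-2}, \tfrac{2(d+2)}{d-2}\big)$ and exponent $\gamma$ gives, after a union bound over $j = 0, \ldots, N-1$ and intersection with $\tilde \Omega_{T, \eps/2}$, a set $\Omega_{T, \eps}$ with $P(\Omega_{T, \eps}^c) < \eps$ on which $\|z^\omega\|_{X(I_j)} \leq \tau_0^\gamma$ for every $j$; this requires taking $\tau_0$ sufficiently small in terms of $T$ and $\eps$, which is allowed since shrinking $\tau_0$ preserves the admissibility condition in Proposition \ref{prop:main}. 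For $\omega \in \Omega_{T, \eps}$ I iterate: starting from $(v^\omega, \pa_t v^\omega)(0) = (0, 0)$, Proposition \ref{prop:main} extends $v^\omega$ to the end of $I_0$; the a priori bound from Proposition \ref{prop:energy} applied to the solution now defined on $[0, \tau_0]$ gives $\|(v^\omega, \pa_t v^\omega)(\tau_0)\|_{\dot H^1 \times L^2} \leq A$, so the same $\tau_0$ is admissible on $I_1$, and the smallness of $z^\omega$ on $I_1$ holds by construction. Iterating through all $N$ sub-intervals produces $v^\omega \in C([0, T]; H^1 \times L^2)$, hence $u^\omega \in C([0, T]; H^s \times H^{s-1})$ thanks to Lemma \ref{lemma:Hs}.

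To globalize, set $\Omega_\eps := \bigcap_{n \geq 1} \Omega_{n,\, \eps \cdot 2^{-n}}$, so that $P(\Omega_\eps^c) < \eps$ and every $\omega \in \Omega_\eps$ yields a solution on every $[-n, n]$, hence on all of $\R$. Then $\tilde \Omega := \bigcup_{m \geq 1} \Omega_{1/m} = \bigcup_{\eps > 0} \Omega_\eps$ has full measure and satisfies the $\eps$-decomposition required in the statement. Uniqueness is inherited from Proposition \ref{prop:main}: concatenating, over $j = 0, \ldots, N-1$, the disjoint sub-covers produced by ``good'' LWP on each $I_j$ yields a single disjoint family $\{\mathcal I_n\}_{n \in \N}$ covering $[-T, T]$ on which $v^\omega$ is unique in the balls $B_a(\mathcal I_n)$.

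The principal obstacle is the $T$-dependence of the iteration: because $A$ grows (indeed diverges) with $T$, so does $N \sim T/\tau_0(A)$, forcing $\tau_0$ to be chosen small (polynomially) in $T$ in order for the union bound across the $N$ applications of Corollary \ref{cor:proba} to remain within probability $\eps/4$. This is harmless for each fixed $T$ but precludes any uniform-in-$T$ space-time bound on $v^\omega$, which is precisely why the argument produces global well-posedness but not scattering (cf.\ Remark \ref{rem1.3}). The $s > 0$ restriction in dimension $d = 4$ enters only through Proposition \ref{prop:energy}, whose proof there invokes $W^{s, r}(\R^4) \hookrightarrow L^\infty$ with $sr > 4$; in $d = 5$ this embedding is unnecessary and $s = 0$ is admitted.
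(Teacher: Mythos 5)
Your proposal is correct and follows essentially the same route as the paper: the probabilistic energy bound of Proposition \ref{prop:energy} supplies the uniform $\dot H^1\times L^2$ a priori control $A$, the ``good'' local theory of Proposition \ref{prop:main} is iterated on $\sim T/\tau_\ast$ subintervals with the smallness of $z^\omega$ on each piece furnished by Corollary \ref{cor:proba} plus a union bound (shrinking $\tau_\ast$ in terms of $T,\eps$ exactly as you indicate), and globalization proceeds by intersecting over a dyadic sequence of times and exceptional probabilities. The only cosmetic slip is taking $K=1$ in $\tau(A,1,\gamma)$ while Corollary \ref{cor:proba} yields the threshold $|I|^\gamma(\|u_0\|_{L^2}+\|u_1\|_{H^{-1}})$; one should take $K\sim \|u_0\|_{L^2}+\|u_1\|_{H^{-1}}$ as the paper does, which changes nothing else in the argument.
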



As in \cite{Bourgain94, Colliand_Oh}, the following proposition stating ``almost'' almost sure global existence and uniqueness for \eqref{NLW} readily implies Theorem \ref{GWP}.

\begin{proposition}[``Almost'' almost sure global existence and uniqueness]
\label{almost_almost}
Let $d=4$ or $5$.
Let $(u_0, u_1)\in H^{s}(\R^d)\times H^{s-1}(\R^d)$,
with $0<s\leq 1$ if $d=4$, and $0\leq s\leq 1$ if $d=5$,
and let 
$(u_0^\omega, u_1^\omega)$ be the randomization
defined in \eqref{R1}, satisfying \eqref{cond}. 

Then,
for any $0<\eps\ll 1$ and $T\geq 1$, there exists $\Omega_{T,\eps}\subset \Omega$
with $P(\Omega_{T,\eps}^c)<\eps$
such that for any $\omega\in \Omega_{T,\eps}$ there exists a unique solution $u^\omega$ of equation \eqref{NLW} on $[0,T]\times \R^d$,
with initial data $\left(u^\omega, \pa_t u^\omega\right)\big|_{t=0}=\left(u_0^\omega, u_1^\omega\right)$ in the class
\begin{align*}
(u^\omega,\pa_tu^\omega)&\in \Big(S(t)(u_0^\omega,u_1^\omega), \pa_t S(t)(u_0^\omega,u_1^\omega)\Big)+C\left([0,T]; H^1(\R^d)\times L^2(\R^d)\right)\\
&\subset C\left([0,T]; H^{s}(\R^d)\times H^{s-1}(\R^d)\right).
\end{align*}
Moreover, the nonlinear part of the solution $v^\omega =u^\omega -S(\cdot)(u_0^\omega, u_1^\omega)$ satisfies the bounds
\begin{align}\label{estim_strich}
\|v^\omega&\|_{L^q_t([0,T]; L^r_x(\R^d))}\notag\\
&\leq \tilde{F}\left(\sup_{t\in [0,T]}\left\|\left(v^\omega(t),\pa_t v^\omega(t)\right)\right\|_{\dot{H}^1_x(\R^d)\times L^2_x(\R^d)},
\|(u_0,u_1)\|_{H^s(\R^d)\times H^{s-1}(\R^d)}\right)T^7\Big(\log \frac{T}{\eps}\Big)^3\notag\\
&\leq C\big(T,\eps, \|(u_0,u_1)\|_{H^{s}(\R^d)\times H^{s-1}(\R^d)}\big),
\end{align}
for all $\dot{H}^1(\R^d)$- wave admissible pairs $(q,r)$.
Here $\tilde{F}:[0,\infty)\times [0,\infty)\to [0,\infty)$
is a non-decreasing function in both variables satisfying $F(0,\cdot)\equiv 0$.

Uniqueness here holds in the following sense.
There exists a family of disjoint intervals $\{\mathcal{I}_n\}_{n\in\N}$
covering $[0,T]$
such that the nonlinear part of the solution
$v^\omega$ is unique in each ball $B_{a}(\mathcal I_n)$
of $X(\mathcal I_n\times \R^d)$ for all $n\in\N$,
where $a>0$ is a small constant.
\end{proposition}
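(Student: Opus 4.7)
The plan is to combine the probabilistic energy bound of Proposition \ref{prop:energy} with the ``good'' deterministic local well-posedness of Proposition \ref{prop:main} in an iterative time-stepping scheme on $[0,T]$. Decompose $u^\omega = z^\omega + v^\omega$ with $z^\omega = S(t)(u_0^\omega, u_1^\omega)$, so that the nonlinear part $v^\omega$ satisfies \eqref{v} with $f = z^\omega$. Proposition \ref{prop:energy} supplies a set $\tilde\Omega_{T,\eps} \subset \Omega$ with $P(\tilde\Omega_{T,\eps}^c) < \eps/2$ on which, as long as $v^\omega$ is defined on a subinterval of $[0,T]$, it obeys the uniform a priori bound $\|(v^\omega, \pa_t v^\omega)\|_{L^\infty_t \dot{H}^1_x \times L^2_x} \leq A$ for some $A = A(T, \eps, \|(u_0, u_1)\|_{H^s \times H^{s-1}})$. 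Because the local time step $\tau$ in Proposition \ref{prop:main} is governed only by $\|(v_0, v_1)\|_{\dot{H}^1 \times L^2}$ and the size of the perturbation, this a priori bound turns Proposition \ref{prop:main} into a tool for extending in equal-sized time steps up to $T$.

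With $A$ determined, fix parameters $\gamma \in (0, 1/q)$ and $K > 0$, and let $\tau = \tau(A, K, \gamma)$ be the local step produced by Proposition \ref{prop:main}. Partition $[0,T]$ into $N \sim T/\tau$ consecutive intervals $J_i$ of length at most $\tau$. On each $J_i$, Corollary \ref{cor:proba} gives $\|z^\omega\|_{X(J_i)} \leq K|J_i|^\gamma$ outside an exceptional set of probability at most $C\exp(-c\tau^{2(\gamma - 1/q)}/T^2)$; a union bound over $i = 0, \dots, N-1$ defines the event $\Omega_{T,\eps} \subset \tilde\Omega_{T,\eps}$ on which the smallness hypothesis \eqref{f} is satisfied on every $J_i$, and a suitable tuning of $\gamma, K, \tau$ in terms of $T$ and $\eps$ yields $P(\Omega_{T,\eps}^c) < \eps$. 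For $\omega \in \Omega_{T,\eps}$, construct $v^\omega$ inductively: start from $(v^\omega, \pa_t v^\omega)(0) = (0,0)$ on $J_0$ and apply Proposition \ref{prop:main}; at each left endpoint $t_i$ of $J_i$ the probabilistic energy bound controls $(v^\omega(t_i), \pa_t v^\omega(t_i))$ in $\dot{H}^1 \times L^2$ by $A$, so Proposition \ref{prop:main} extends $v^\omega$ across $J_i$ with $\|v^\omega\|_{L^q_t L^r_x(J_i)} \leq C(A)$ for every $\dot{H}^1$-wave admissible pair $(q,r)$. Concatenating across the $N$ intervals gives the global Strichartz bound $\|v^\omega\|_{L^q_t L^r_x([0,T])} \lesssim N \cdot C(A)$, and the local uniqueness assertion of Proposition \ref{prop:main} produces the family $\{\mathcal{I}_n\}_{n \in \N}$ in which uniqueness in each $B_a(\mathcal{I}_n)$ holds.

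The main obstacle is quantitative: the free parameters $\gamma, K, \tau$ must be tuned so as to simultaneously (i) let Corollary \ref{cor:proba} deliver the smallness of $z^\omega$ in $X(J_i)$ required by Proposition \ref{prop:main}, (ii) keep the union-bound failure probability below $\eps/2$, and (iii) produce the explicit $T^7 (\log(T/\eps))^3$ growth stated in \eqref{estim_strich}. Because $\tau(A, K, \gamma)$ is non-increasing in $A$ and $A$ itself grows with $T$ and $\eps^{-1}$ through Proposition \ref{prop:energy}, extracting the precise polynomial-in-$T$ and polylogarithmic-in-$\eps^{-1}$ dependence of the Strichartz norm requires solving the constraint $N \cdot C \exp(-c \tau^{2(\gamma - 1/q)}/T^2) \leq \eps/2$ for $\tau$ and then summing the $N$ per-interval contributions. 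This balancing, rather than any new analytic ingredient, is the technical heart of the proof.
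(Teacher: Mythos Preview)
Your proposal is correct and follows essentially the same approach as the paper: combine the probabilistic energy bound (Proposition~\ref{prop:energy}) with the ``good'' local well-posedness (Proposition~\ref{prop:main}) in an equal-step iteration, using Corollary~\ref{cor:proba} and a union bound to secure the smallness of $z^\omega$ on each subinterval. The paper makes the quantitative balancing you flag explicit by taking $K = C(\|u_0\|_{L^2} + \|u_1\|_{H^{-1}})$, fixing $\gamma = \frac{5d-14}{6(d+2)} < \frac{d-2}{d+2}$, and choosing $\tau_\ast = \min\big\{\tau(A,K,\gamma),\, \tfrac{1}{2}\big(\tfrac{c}{2T^2\log(2T/\eps)}\big)^{\frac{d+2}{2(d-2-\gamma(d+2))}}\big\}$, which directly yields the $T^7(\log(T/\eps))^3$ factor.
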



Before proving Proposition \ref{almost_almost},
we first show how Theorem \ref{GWP} follows from it.
\begin{proof}[Proof of Theorem \ref{GWP}]
By the time reversibility of the equation,
it is sufficient to prove the almost sure existence of unique solutions
defined on the time interval $[0,\infty)$.
For fixed $\eps>0$, we consider $T_j=2^j$ and $\e_j=2^{-j}\eps$,
and using Proposition \ref{almost_almost}, we obtain $\Omega_{T_j,\eps_j}$.
Considering now $\Omega_\eps:=\cap_{j=1}^\infty \Omega_{T_j,\eps_j}$,
we have that $P(\Omega_{\eps}^c)<\eps$ and 
\eqref{NLW} admits a unique solution on $[0,\infty)$ for all $\omega\in \Omega_\eps$.  
Finally, defining $\tilde{\Omega}:=\cup_{\eps>0}\Omega_\eps$,
we have that $P(\tilde{\Omega}^c)=0$ 
and \eqref{NLW} admits a unique solution on $[0,\infty)$ for all $\omega\in \tilde{\Omega}$.
\end{proof}

We continue
with the proof of Proposition \ref{almost_almost}.
The main ingredients of the proof are Proposition \ref{prop:energy}
giving the probabilistic energy bound
and Proposition \ref{prop:main} 
containing the ``good'' deterministic local
well-posedness.


\begin{proof}[Proof of Proposition \ref{almost_almost}]
By Proposition \ref{prop:energy}, there
exists $\tilde{\Omega}_{T,\eps}$, with $P(\tilde{\Omega}^{c}_{T,\eps})<\frac{\eps}{2}$
such that for any $\omega\in\tilde\Omega_{T,\eps}$, the solution of  \eqref{v} satisfies the a priori bound 
\begin{equation}\label{A1}
A:=\sup_{t\in [0,T]}\|(v^\omega(t),\pa_tv^\omega(t))\|_{\dot{H}^1_x(\R^d)\times L^2_x(\R^d)}
\leq C\big(T,\eps, \|(u_0,u_1)\|_{H^s(\R^d)\times H^{s-1}(\R^d)}\big).
\end{equation}
Then, we set
\begin{equation}\label{K}
K:=C\big(\|u_0\|_{L^2(\R^d)}+\|u_1\|_{H^{-1}(\R^d)}\big) \text{ and } \gamma<\frac{d-2}{d+2}.
\end{equation}
Consider $\tau=\tau(A, K,\gamma)$ defined in Proposition 
\ref{prop:main}. Let $\tau_\ast\leq \tau$ to be chosen later.
We cover the interval $[0,T]$ by $\Big[\frac{T}{\tau_\ast}\Big]$ or $\Big[\frac{T}{\tau_\ast}\Big]+1$ subintervals $\mathfrak{I}_k:=[k\tau_\ast,(k+1)\tau_\ast]\cap [0,T]$, $k=0,1,\dots$.
By Corollary \ref{cor:proba},
for each $k$, there exists $\Omega_k\subset\Omega$ with
\[P(\Omega_k^c)\leq C\exp\left(-\frac{c}{T^2\tau_\ast^{2(\frac{d-2}{d+2}-\gamma)}}\right)\]
such that for any $\omega\in \Omega_k$ we have
\begin{align*}
\|z^\omega\|_{X(\mathfrak{I}_k\times\R^d)}\leq K\tau_\ast^\gamma.
\end{align*}

Consider $\widehat \Omega_{T,\eps}:=\bigcap_{k} \Omega_k\subset \Omega$. 
Then, for any $\omega\in \widehat \Omega_{T,\eps}$ 
and for all $k=0,1,\dots$, we have 
\begin{align*}
\|z^\omega\|_{X(\mathfrak{I}_k\times\R^d)}\leq K\tau_\ast^\gamma
\end{align*}
and
\begin{align*}
P(\widehat \Omega^c_{T,\eps})&\leq \left(\Big[\frac{T}{\tau_\ast}\Big]+1\right) C\exp\left(-\frac{c}{T^2\tau_\ast^{2(\frac{d-2}{d+2}-\gamma)}}\right)\\
&\leq \frac{T}{\tau_\ast}\tau_\ast \exp\left(-\frac{c}{2T^2\tau_\ast^{2(\frac{d-2}{d+2}-\gamma)}}\right)
=T \exp\left(-\frac{c}{2T^2\tau_\ast^{2(\frac{d-2}{d+2}-\gamma)}}\right)
\end{align*}
if $\tau_\ast$ is sufficiently small.
Fixing 
\begin{equation}\label{tau_ast}
\tau_\ast (A, K,\gamma, T,\eps) = \min \left\{ \tau(A, K,\gamma), \frac{1}{2}\left(\frac{c}{2T^2\log\frac{2T}{\eps}}\right)^{\frac{d+2}{2(d-2-\gamma(d+2))}}\right\},
\end{equation}

\noindent
we obtain that $P( \widehat \Omega^c_{T,\eps})<\frac{\eps}{2}$.

We now define $\Omega_{T,\eps}:=\tilde\Omega_{T,\eps}\cap \widehat{\Omega}_{T,\eps}$.
Notice that $P(\Omega_{T,\eps}^c)<\eps$. 
Then, for any $\omega\in \Omega_{T,\eps}$,
we have that the conditions of Proposition
\ref{prop:main} are satisfied on each subinterval $\mathfrak{I}_k$ 
with $f=z^\omega$, $K$ and $\gamma$ defined above
in \eqref{K}, and 
$\|(v(k\tau_\ast), (k+1)\tau_\ast)\|_{\dot{H}^1_x(\R^d)\times L^2_x(\R^d)}\leq A$ with $A$ defined above in \eqref{A1}.
Applying successively Proposition \ref{prop:main} on each $\mathfrak{I}_k$, $k=0,1,\dots$, we obtain for all $\omega\in\Omega_{T,\eps}$
a unique solution 
$v^\omega$ of \eqref{v} such that $(v^\omega, \pa_t v^\omega)$
is in the class $C([0,T]; \dot{H}^1(\R^d)\times L^2(\R^d))$. 
By \eqref{L2_apriori}, it follows that
$(v^\omega,\pa_t v^\omega)$ also belongs to the class
$C([0,T]; H^1(\R^d)\times L^2(\R^d))$.
The uniqueness is in the sense of Proposition \ref{prop:main}.

Moreover, with the choice of $\tau_\ast$ in \eqref{tau_ast}
and fixing $\gamma=\frac{5d-14}{6(d+2)}<\frac{d-2}{d+2}$,
it follows by \eqref{deterministic_bound}
that
this solution satisfies
\begin{align*}
\|v^\omega\|_{L^q_t([0,T]; L^r_x(\R^d))}&
\leq C(A)\frac{T}{\tau_\ast}\leq C(A)T\max\left\{\frac{1}{\tau (A,K)}, \Big(T^2\log\frac{T}{\eps}\Big)^3\right\}\notag\\
&\leq \tilde{F}(A,K)T^7 \Big(\log\frac{T}{\eps}\Big)^3,\notag
\end{align*}
for all $\dot{H}^1(\R^d)$-wave admissible pairs $(q,r)$ and $A>0$.
Since $C(A)$ is a non-decreasing function of $A$
and $\tau (A,K)$ is non-increasing in both $A$ and $K$,
it follows that $\tilde{F}$ can be chosen to be a non-decreasing function 
in both variables.
Then, by \eqref{A1}, 
we conclude that
\[\|v^\omega\|_{L^q_t([0,T]; L^r_x(\R^d))}\leq C\big(T,\eps, 
\|(u_0,u_1)\|_{H^{s}(\R^d)\times H^{s-1}(\R^d)}\big).\]
\end{proof}

The following corollary shows that almost sure global existence and uniqueness of 
the energy-critical defocusing NLW on $\R^d$, $d=4$ and $5$,
can also be proved for more general initial data than the ones considered in Theorem \ref{GWP}.


\begin{corollary}[Enhanced almost sure global existence and uniqueness]\label{GWPcor}
Let $d=4$ or $5$.
Let $(u_0, u_1)\in H^{s}(\R^d)\times H^{s-1}(\R^d)$,
with $0<s\leq 1$ if $d=4$, and $0\leq s\leq 1$ if $d=5$,
and let 
$(u_0^\omega, u_1^\omega)$ be the randomization
defined in \eqref{R1}, satisfying \eqref{cond}. 
Let $t_\ast\in\R$ and $(v_0,v_1)\in H^1(\R^d)\times L^2(\R^d)$.

Then, there exists a set $\Omega'\subset\Omega$ with $P(\Omega')=1$
such that, for each $\omega \in \Omega'$, 
the energy-critical defocusing NLW on $\R^d$ 
with initial data 
\[(u^\omega,\pa_t u^\omega)\Big|_{t=t_\ast}
=\left(S(t_\ast)(u_0^\omega, u_1^\omega), \pa_t S(t_\ast)(u_0^\omega, u_1^\omega)\right)+(v_0,v_1)\] 
admits a unique global solution $u^\omega$  in the class
\begin{align*}
(u^\omega,\pa_tu^\omega)&\in \Big(S(t)(u_0^\omega,u_1^\omega), \pa_t S(t)(u_0^\omega,u_1^\omega)\Big)+C\left(\R; H^1(\R^d)\times L^2(\R^d)\right)\\
&\subset C\left(\R; H^{s}(\R^d)\times H^{s-1}(\R^d)\right).
\end{align*}

Uniqueness here holds in the following sense.
The set $ \Omega'$
can be written as 
$\Omega'=\cup_{\eps>0}\Omega_\eps'$
with $P((\Omega_\eps')^c)<\eps$
and
for any $\eps>0$, $\omega\in\Omega_\eps'$, and $0<T<\infty$,
there exists a family of disjoint intervals $\{\mathcal{I}_n\}_{n\in\N}$
covering $[t_\ast-T,t_\ast+T]$
such that the nonlinear part of the solution
$v^\omega=u^\omega - S(\cdot) (u_0^\omega, u_1^\omega)$ is unique in
the ball $B_{a}(\mathcal I_n)$
of $X(\mathcal I_n\times \R^d)$ for all $n\in\N$,
where $a>0$ is a small constant.
\end{corollary}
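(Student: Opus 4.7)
The plan is to reduce Corollary \ref{GWPcor} to the same machinery that gave Theorem \ref{GWP}, absorbing the only new feature — a nonzero finite initial energy for the nonlinear part of the solution, imposed at $t=t_\ast$ instead of $t=0$ — into the constants. Writing $z^\omega(t) := S(t)(u_0^\omega, u_1^\omega)$ and $w^\omega := u^\omega - z^\omega$, the statement reduces to constructing a unique global solution of the perturbed wave equation
\begin{equation*}
\pa_t^2 w^\omega - \Delta w^\omega + F(w^\omega + z^\omega) = 0, \qquad (w^\omega, \pa_t w^\omega)\big|_{t = t_\ast} = (v_0, v_1),
\end{equation*}
with $(v_0, v_1) \in H^1(\R^d) \times L^2(\R^d)$ deterministic. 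By the time reversibility of the equation, it suffices to produce, for each $T \geq 1$ and $\eps > 0$, a set $\Omega_{T, \eps}'$ with $P((\Omega_{T,\eps}')^c) < \eps$ on which a unique such $w^\omega$ exists on $[t_\ast - T, t_\ast + T]$, and then intersect over a dyadic sequence $T_j = 2^j$, $\eps_j = 2^{-j} \eps$ exactly as in the deduction of Theorem \ref{GWP} from Proposition \ref{almost_almost}.

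First I would revisit the probabilistic energy bound of Proposition \ref{prop:energy}. The pointwise identity for $\frac{d}{dt} E(w^\omega)$ is identical, so the differential inequalities \eqref{basic_energy} and \eqref{basic_energy5} remain valid; integrating them from $t_\ast$ (instead of from $0$) merely introduces an extra additive constant $E(v_0, v_1)^{1/2}$ on the right-hand side before applying Gronwall (for $d = 4$) or the nonlinear Gronwall Lemma \ref{Gronwall} (for $d = 5$). Because Proposition \ref{proba_S} controls the relevant Strichartz norms of $z^\omega$ on the interval $[t_\ast - T, t_\ast + T]$ with a dependence on the endpoints through the factor $\max(1, |t_\ast - T|^2, |t_\ast + T|^2)$, the argument yields a set $\tilde \Omega_{T,\eps}'$ of probability at least $1 - \eps/2$ on which
\begin{equation*}
A := \sup_{t \in [t_\ast - T, t_\ast + T]} \|(w^\omega(t), \pa_t w^\omega(t))\|_{\dot H^1 \times L^2} \leq C\big(T, \eps, t_\ast, \|(u_0, u_1)\|_{H^s \times H^{s-1}}, \|(v_0, v_1)\|_{H^1 \times L^2}\big).
\end{equation*}
With this $A$ in hand, I would follow the proof of Proposition \ref{almost_almost} verbatim: set $K$ and $\gamma$ as in \eqref{K}, choose $\tau_\ast = \tau_\ast(A, K, \gamma, T, \eps)$ as in \eqref{tau_ast}, partition $[t_\ast - T, t_\ast + T]$ into subintervals $\mathfrak{I}_k = [t_\ast + k\tau_\ast, t_\ast + (k+1)\tau_\ast] \cap [t_\ast - T, t_\ast + T]$, apply Corollary \ref{cor:proba} on each $\mathfrak{I}_k$ to obtain $\|z^\omega\|_{X(\mathfrak{I}_k \times \R^d)} \leq K \tau_\ast^\gamma$ outside an event of total probability $< \eps/2$, and then iterate the ``good'' deterministic local well-posedness of Proposition \ref{prop:main} with $f = z^\omega$ across the family $\{\mathfrak{I}_k\}$, starting from $t = t_\ast$ and propagating in both time directions.

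I do not expect a substantive obstacle: the only structurally new ingredient is the presence of a finite initial energy $E(v_0, v_1)$ for the nonlinear part, which is absorbed harmlessly into the energy bound and therefore into the constant $A$ that enters Proposition \ref{prop:main}. The resulting $\tau_\ast$ is smaller (since $\tau(A, K, \gamma)$ is non-increasing in $A$), but still positive and sufficient to cover $[t_\ast - T, t_\ast + T]$ in finitely many steps. Uniqueness on each $\mathfrak{I}_k$ is immediate from the corresponding uniqueness in Proposition \ref{prop:main}, and the dyadic intersection of the $\Omega_{T_j, \eps_j}'$ produces the set $\Omega' = \bigcup_{\eps > 0} \Omega_\eps'$ of full measure in the same way as in the proof of Theorem \ref{GWP}.
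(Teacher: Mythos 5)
Your proposal is correct and follows essentially the same route as the paper: decompose into linear and nonlinear parts, rerun the probabilistic energy bound of Proposition \ref{prop:energy} starting from $t=t_\ast$ with the extra additive contribution of $E(v_0,v_1)$ (controlled via Sobolev embedding by $\|(v_0,v_1)\|_{\dot H^1\times L^2}$), then iterate Proposition \ref{prop:main} with a $\tau_\ast$ whose endpoint dependence comes from $\max(1,|t_\ast|^2,|t_\ast+T|^2)$ in Proposition \ref{proba_S}, and conclude by the dyadic intersection as in Theorem \ref{GWP}. The only cosmetic difference is that the paper works on $[t_\ast,t_\ast+T]$ and invokes time reversibility rather than propagating in both directions at once.
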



\begin{proof}
Let $0<\eps\ll 1$ and $2\leq T<\infty$. Then, $\max\left(|t_\ast|, |t_\ast+T|\right)\geq 1$.
We look for a solution of the energy-critical defocusing NLW on $\R^d$ of the form
\[u^\omega (t)=S(t)\big(u_0^\omega, u_1^\omega\big)
+v^\omega(t),\]

\noindent
where the nonlinear part $v^\omega$ satisfies 
\begin{align}\label{vw_01}
\begin{cases}
\pa_t^2 v^\omega-\Delta v^\omega+ F\big(v^\omega+z^\omega\big)=0\\
\big(v^\omega,\pa_t v^\omega\big)\Big|_{t=t_\ast}=(v_0,v_1)
\end{cases}
\end{align}

\noindent
with $z^\omega= S(t)(u_0^\omega, u_1^\omega)$.

In the following, we first prove a probabilistic energy bound for $v^\omega$.
Notice that, by the Sobolev embedding $\dot{H}^1(\R^d)\subset L^{\frac{2d}{d-2}}(\R^d)$,
we have
\begin{equation}\label{energyH1}
E(v^\omega(t))\leq \frac 12\big\|\big(v^\omega(t),\pa_tv^\omega(t)\big)\big\|_{\dot{H}^1(\R^d)\times L^2(\R^d)}^2
+C\|v^\omega(t)\|_{\dot{H}^1(\R^d)}^{\frac{2d}{d-2}}.
\end{equation}

We first consider the case $d=4$.
The same computations as in Proposition \ref{prop:energy}
show that \eqref{basic_energy} holds. Integrating \eqref{basic_energy} from $t_\ast$ to $t$, where $t_\ast\leq t\leq t_\ast+T$,
and using $\big(v^\omega,\pa_t v^\omega\big)\Big|_{t=t_\ast}=(v_0,v_1)$ and 
\eqref{energyH1},
it follows that
\begin{align*}
\Big(E\big(v^\omega(t)\big)\Big)^{\frac12}&
\leq C\big(\|v_0\|_{\dot{H}^1(\R^4)}+
 \|v_1\|_{L^2(\R^4)}+ \|v_0\|_{\dot{H}^1(\R^4)}^2\big)
 +C \big\|z^\omega\big\|_{X([t_\ast, t_\ast+T]\times \R^4)}^3\\
 &+C\int_{t_\ast}^{t}
 \big\|z^\omega(s)\big\|_{L^\infty_x(\R^4)}\Big(E\big(v^\omega(s)\big)\Big)^{\frac12}ds.\notag
\end{align*}
Then, by Gronwall's inequality, we obtain for all $t\in [t_\ast, t_\ast+T]$ that
\begin{align}\label{Ecor}
\Big(E\big(v^\omega(t)\big)\Big)^{\frac12}&
\leq C\left(\|v_0\|_{\dot{H}^1(\R^4)}+
 \|v_1\|_{L^2(\R^4)}+ \|v_0\|_{\dot{H}^1(\R^4)}^2
 + \big\|z^\omega\big\|_{X([t_\ast,t_\ast+T]\times \R^4)}^3\right)\notag\\
 &\hphantom{XXXXXX}\times  e^{C\|z^\omega\|_{L^1_t([t_\ast,t_\ast+T];L_x^\infty(\R^4))}}.
\end{align}

\noindent
Next, we consider $K_1, K_2>0$ such that $C\exp(-cK_1^2)+C\exp(-cK_2^2)<\frac{\eps}{2}$,
where $C,c>0$ are such that both the estimates in Proposition \ref{proba_S} (ii) for $(q,r)=(3,6)$ and in Proposition \ref{proba_S} (iii) for $(1,\infty)$ hold with $C,c$.
Then, by Proposition \ref{proba_S} (ii) and (iii)
with 
\[\l=K_1T^{\frac13}\max\left(|t_\ast|, |t_\ast+T|\right)\left(\|u_0\|_{L^2(\R^4)}+\|u_1\|_{H^{-1}(\R^4)}\right) \] 
and 
\[\l=K_2 T \max\left(|t_\ast|, |t_\ast+T|\right)\left(\|u_0\|_{H^{s}(\R^4)}+\|u_1\|_{H^{s-1}(\R^4)}\right)\] 
respectively, we obtain that
there exists $\tilde{\Omega}_{t_\ast, T,\eps}(\R^4)\subset \Omega$
with $P(\tilde{\Omega}_{t_\ast,T,\eps}^c(\R^4))<\frac{\eps}{2}$
such that
\begin{align*}
\|z^\omega\|_{X([t_\ast, t_\ast+T]\times \R^4)}&\leq K_1T^{\frac 13} \max\left(|t_\ast|, |t_\ast+T|\right) \left(\|u_0\|_{L^2(\R^4)}+\|u_1\|_{H^{-1}(\R^4)}\right) ,\\
\|z^\omega\|_{L^1_t([t_\ast, t_\ast+T]; L^\infty_x(\R^4))}&\leq K_2 T \max\left(|t_\ast|, |t_\ast+T|\right)\left(\|u_0\|_{H^{s}(\R^4)}+\|u_1\|_{H^{s-1}(\R^4)}\right) 
\end{align*} 
for any $\omega\in \tilde{\Omega}_{t_\ast, T,\eps}(\R^4)$ and $0<s\leq 1$.
Combining these with \eqref{Ecor} yields for all
 $\omega\in \tilde \Omega_{t_\ast, T,\eps}$ that
\begin{align}\label{tildeA}
&\|(v^\omega,\pa_t v^\omega)\|_{L^\infty_t([t_\ast, t_\ast+T]; \dot{H}^1_x(\R^4)\times L^2_x(\R^4))}
\leq C\Big(\|v_0\|_{\dot{H}^1(\R^4)}+
 \|v_1\|_{L^2(\R^4)}+ \|v_0\|_{\dot{H}^1(\R^4)}^2\notag\\
& \hphantom{XXXXXXXXXXXX}+K_1^3 T\max\left(|t_\ast|, |t_\ast+T|\right)^3
\left(\|u_0\|_{L^2(\R^4)}+\|u_1\|_{H^{-1}(\R^4)}\right)^3\Big)\notag\\
&\hphantom{XXXXXXXXXXXXXXXXXX} \times e^{CK_2T\max\left(|t_\ast|, |t_\ast+T|\right)\left(\|u_0\|_{H^{s}(\R^4)}+\|u_1\|_{H^{s-1}(\R^4)}\right) }\notag\\
&\hphantom{XXX}\leq C\big(t_\ast, T,\eps, \|(v_0,v_1)\|_{\dot{H}^1(\R^4)\times L^2(\R^4)}, 
\|(u_0,u_1)\|_{H^s(\R^4)\times H^{s-1}(\R^4)}\big)=:\tilde A(\R^4).
\end{align}

We now turn to the case $d=5$. 
The same computations as in Proposition \ref{prop:energy}
show that \eqref{basic_energy5} holds. Integrating \eqref{basic_energy5} from $t_\ast$ to $t$, where $t_\ast\leq t\leq t_\ast+T$,
and using $\big(v^\omega,\pa_t v^\omega\big)\Big|_{t=t_\ast}=(v_0,v_1)$ and 
\eqref{energyH1}, we obtain
\begin{align*}
\Big(E\big(v^\omega(t)\big)\Big)^{\frac12}&
\leq 
C\left(\|v_0\|_{\dot{H}^1(\R^5)}+
 \|v_1\|_{L^2(\R^5)}+ \|v_0\|_{\dot{H}^1(\R^5)}^{\frac 53}\right)
+C \big\|z^\omega\big\|
_{X([t_\ast,t_\ast+T]\times \R^5)}^{\frac 73}\\
&+C\int_{t_\ast}^t\big\|z^\omega(s)\big\|_{L^{10}_x(\R^5)}\left(\Big(E\big(v^\omega(s)\big)\Big)^{\frac12}\right)^{\frac 45}ds.
\end{align*}
Then, by the nonlinear Gronwall's inequality in Lemma \ref{Gronwall} with $\alpha=\frac 45$, 
it follows that:
\begin{align*}
\Big(E\big(v^\omega(t)\big)\Big)^{\frac12}
&\leq C\left(\|v_0\|_{\dot{H}^1(\R^5)}+
 \|v_1\|_{L^2(\R^5)}+ \|v_0\|_{\dot{H}^1(\R^5)}^{\frac 53}\right) 
 +C \big\|z^\omega\big\|
_{X([t_\ast,t_\ast+T]\times \R^5)}^{\frac 73}\\
&+C\|z^\omega\|_{L^1_t([t_\ast,t_\ast+T];L^{10}_x(\R^5))}^{5}
\end{align*}
for all $t\in [t_\ast, t_\ast+T]$.
Applying Proposition \ref{proba_S} (ii) as above,
we obtain the existence of a set $\tilde\Omega_{t_\ast,T,\eps}(\R^5)\subset \Omega$
such that $P(\tilde\Omega^c_{t_\ast,T,\eps}(\R^5))<\frac{\eps}{2}$ and
such that for all $\omega\in\tilde\Omega_{t_\ast,T,\eps}(\R^5)$ we have 
\begin{align}\label{tildeA5}
\|(v^\omega,\pa_t &v^\omega)\|_{L^\infty_t([t_\ast, t_\ast+T]; \dot{H}^1_x(\R^5)\times L^2_x(\R^5))}
\leq C\left(\|v_0\|_{\dot{H}^1(\R^5)}+
 \|v_1\|_{L^2(\R^5)}+ \|v_0\|_{\dot{H}^1(\R^5)}^{\frac 53}\right) \notag\\
& +\left(K_1T^{\frac{3}{7}}\max\left(|t_\ast|, |t_\ast+T|\right)\left(\|u_0\|_{L^2(\R^5)}+\|u_1\|_{H^{-1}(\R^5)}\right)\right)^{\frac{7}{3}}\notag\\
&+\left(K_2T\max\left(|t_\ast|, |t_\ast+T|\right)\left(\|u_0\|_{L^2(\R^5)}+\|u_1\|_{H^{-1}(\R^5)}\right)\right)^{5}\notag\\
&\leq C\Big(t_\ast, T,\eps,\|(v_0,v_1)\|_{\dot{H}^1(\R^5)\times L^2(\R^5)},\|(u_0,u_1)\|_{L^2(\R^5)\times H^{-1}(\R^5)}\Big)=:\tilde A(\R^5) .
\end{align}

We now return to general dimension $d=4$ or $5$.
For $\tilde A$ defined in \eqref{tildeA} if $d=4$, and in \eqref{tildeA5} if $d=5$, and for $K$ and $\gamma$ as in \eqref{K}, we consider $\tau(\tilde A, K, \gamma)$ defined in Proposition \ref{prop:main}.
Following exactly the same lines as in the proof of Proposition \ref{almost_almost}
with $[0,T]$ replaced by $[t_\ast, t_\ast+T]$, 
 $\tau(A, K, \gamma)$ replaced by  $\tau(\tilde{A}, K, \gamma)$, 
\begin{align*}
\tau_{\ast}\left(\tilde A, K, \gamma, t_\ast, T,\eps\right):=
\min\left\{\tau(\tilde A, K, \gamma), \frac 12\left(\frac{c}{2\max\left(|t_\ast|^2, |t_\ast+T|^2\right)\log \frac{2T}{\eps}}\right)^{\frac{d+2}{2(d-2-\gamma(d+2))}}\right\},
\end{align*}
and initial data $(v_0,v_1)$ at $t=t_\ast$ for the first interval $[t_\ast, t_\ast+\tau_\ast]$ on which we apply Proposition \ref{prop:main},
we obtain the existence of a set $\Omega_{t_\ast, T,\eps}'\subset \Omega$ with $P\big((\Omega_{t_\ast, T,\eps}')^c\big)<\eps$ such that, for any $\omega\in\Omega_{t_\ast, T,\eps}'$, \eqref{vw_01} admits a unique solution on $[t_\ast,t_\ast+T]$.
Corollary \ref{GWPcor} then follows exactly the same way Theorem \ref{GWP}
follows from  Proposition \ref{almost_almost}.
\end{proof}


Theorem \ref{GWP} above essentially states that there
exists a set of initial data $\Theta \subset H^s(\R^d)\times H^{s-1}(\R^d)$ with
$0< s\leq 1$ if $d=4$, and $0\leq s\leq 1$ if $d=5$, of full measure, such that
for each $(\phi_0, \phi_1)\in\Theta$, equation \eqref{NLW} admits a unique global solution
with $(\phi_0, \phi_1)$ as initial data at $t=0$.
We recall the notation for the solution map of equation \eqref{NLW}, $\Phi(t): (\phi_0,\phi_1)\mapsto u(t)$.
Even though at time $t=0$ we have $\Theta$ a set of initial data of full measure, 
this does not a priori prevent $\Phi (t)(\Theta)$ to be a set of small measure for $t\neq 0$. 
Proposition \ref{invariant_Sigma} below ensures
that there exists a set $\Sigma$  
on which the flow is globally defined and with the property that
$\Phi(t)(\Sigma)$ is of full measure for all $t\in\R$. 
See \cite[Proposition 3.1]{BT3} and \cite[Theorem 1.2]{OQ} for related results 
concerning cubic NLW on $\T^3$ and cubic NLS on $\T$, respectively. 


\begin{proposition}[Existence of invariant sets of full measure]
\label{invariant_Sigma}
Let $d=4$ or $5$, $0 < s\leq 1$ if $d=4$, and $0\leq s\leq 1$ if $d=5$.
Then, for any countable subgroup $\mathcal T$ of $(\R,+)$, there exists $\Sigma_\mathcal T \subset H^s(\R^d)\times H^{s-1}(\R^d)$ of full measure, 
i.e. $\mu_{(w_0,w_1)}(\Sigma_\mathcal T )=1$ for all $\mu_{(w_0,w_1)}\in \mathcal{M}_s$,
such that for every $(\phi_0,\phi_1)\in \Sigma_\mathcal T$, equation \eqref{NLW} admits a unique global solution $u$ with initial data $(u,\pa_t u)\big|_{t=0}=(\phi_0,\phi_1)$, and $\Phi(t)(\Sigma_\mathcal T) =\Sigma_\mathcal T$ for all $t\in\mathcal T$.

As a consequence, there exists a set $\Sigma \subset H^s(\R^d)\times H^{s-1}(\R^d)$ such that the flow of the energy-critical defocusing NLW \eqref{NLW} is globally defined on $\Sigma$ and $\Phi(t)(\Sigma)$ is of full measure for all $t\in\R$.
\end{proposition}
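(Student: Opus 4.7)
The plan is to produce a universal ``good'' set $\Theta \subset H^s(\R^d) \times H^{s-1}(\R^d)$, namely the set of initial data admitting a unique global solution in the class of Theorem \ref{main}, and to show that a countable-intersection subset $\Sigma_{\mathcal{T}} \subset \Theta$ is invariant under $\Phi(t)$ for $t \in \mathcal{T}$. Full measure of $\Theta$ for every $\mu \in \mathcal{M}_s$ is an immediate consequence of Theorem \ref{GWP}: for $\mu = \mu_{(u_0,u_1)}$, the full-probability set $\tilde{\Omega}$ supplied by Theorem \ref{GWP} maps into $\Theta$ under the randomization, so by definition of $\mu_{(u_0,u_1)}$ we have $\mu(\Theta) = 1$.

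The key ingredient is flow invariance. Fix $(\phi_0,\phi_1) \in \Theta$ with global solution $u = S(\cdot)(\phi_0,\phi_1)+v$, $v \in C(\R;H^1\times L^2)$, and $t_0 \in \R$. Setting $(\tilde{\phi}_0,\tilde{\phi}_1) := (u(t_0),\pa_t u(t_0))$ and $\tilde{u}(s) := u(s+t_0)$, the semigroup identity $S(s+t_0) = S(s)S(t_0)$ yields
\begin{equation*}
\tilde{u}(s) - S(s)(\tilde{\phi}_0,\tilde{\phi}_1) \;=\; v(s+t_0) - S(s)(v(t_0),\pa_t v(t_0)) \;\in\; C(\R; H^1\times L^2),
\end{equation*}
so $\tilde{u}$ is a global solution from $(\tilde{\phi}_0,\tilde{\phi}_1)$ at $s=0$ in the admissible class. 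Uniqueness of $\tilde u$ follows from Corollary \ref{GWPcor}, applied at $t_\ast = t_0$ with perturbation $(v(t_0),\pa_t v(t_0)) \in H^1\times L^2$, combined with the time-translation invariance of \eqref{NLW} (which transfers uniqueness for data posed at $t_\ast$ to data posed at $s=0$). Hence $(\tilde{\phi}_0,\tilde{\phi}_1) \in \Theta$, giving $\Phi(t_0)(\Theta) \subset \Theta$, and time-reversibility of \eqref{NLW} yields equality.

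Because the full-probability set supplied by Corollary \ref{GWPcor} depends on $t_0$, the countable assumption on $\mathcal{T}$ enters here. I define $\Sigma_{\mathcal{T}} := \bigcap_{t \in \mathcal{T}} \Phi(-t)(\Theta)$, interpreted as those $(\phi_0,\phi_1) \in \Theta$ whose trajectories visit $\Theta$ at every time in $\mathcal{T}$. Countability of $\mathcal{T}$ preserves full $\mu$-measure under intersection, and the group property $\mathcal{T}+\mathcal{T}=\mathcal{T}$ upgrades $\Phi(s)(\Sigma_{\mathcal{T}}) \subset \Sigma_{\mathcal{T}}$ (for $s \in \mathcal{T}$) to equality via time-reversibility. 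For the consequence, take $\mathcal{T} = \Q$ and $\Sigma := \Sigma_{\Q}$; for arbitrary $t \in \R$, the argument of the previous paragraph applied to each $(\phi_0,\phi_1) \in \Sigma \subset \Theta$ gives $\Phi(t)(\phi_0,\phi_1) \in \Theta$, so $\Phi(t)(\Sigma)$ has full $\mu$-measure for every $\mu \in \mathcal{M}_s$ and every $t \in \R$.

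The main obstacle is the uniqueness step. Corollary \ref{GWPcor} is formulated for a \emph{deterministic} perturbation $(v_0,v_1)$, whereas our $(v(t_0),\pa_t v(t_0))$ depends on $\omega$ through the nonlinear part of $u$. This is resolved by observing that the uniqueness within Corollary \ref{GWPcor} ultimately descends from the purely deterministic Proposition \ref{prop:main}, which provides uniqueness for \emph{any} $(v_0,v_1)\in\dot H^1 \times L^2$; the randomness is used solely to produce the Strichartz smallness of the linear evolution $S(\cdot)(u_0^\omega,u_1^\omega)$ on short subintervals (via Corollary \ref{cor:proba}). Thus one can rerun the iteration from the proof of Proposition \ref{almost_almost} with the $\omega$-dependent pair $(v(t_0),\pa_t v(t_0))$, and the countable intersection over $\mathcal{T}$ absorbs the only remaining $t_0$-dependence of the probabilistic sets.
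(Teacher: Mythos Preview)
Your proposal has the right architecture but contains two genuine gaps.

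\textbf{Gap 1: the invariance $\Phi(t_0)(\Theta)\subset\Theta$ does not close as written.} You define $\Theta$ as the set of data admitting a unique global solution, and then try to verify that the shifted data $(\tilde\phi_0,\tilde\phi_1)=(u(t_0),\pa_t u(t_0))$ again lies in $\Theta$. Existence is fine; the problem is uniqueness. You correctly note that uniqueness ultimately rests on the deterministic Proposition~\ref{prop:main}, but that proposition has a hypothesis you cannot verify: the perturbation $f$ (here the linear evolution of $(\tilde\phi_0,\tilde\phi_1)$) must satisfy $\|f\|_{X([t_0,t_0+\tau_\ast])}\leq K\tau_\ast^\gamma$ on subintervals whose length $\tau_\ast$ depends on the $\dot H^1\times L^2$-size of the nonlinear part $(v(t_0),\pa_t v(t_0))$. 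Membership in your $\Theta$ encodes solvability, not this quantitative Strichartz smallness, so you cannot deduce it. Your appeal to Corollary~\ref{cor:proba} does not help: that corollary concerns randomized data $(u_0^\omega,u_1^\omega)$, whereas a generic $(\phi_0,\phi_1)\in\Theta$ is not of that form (after one step of the flow it is randomized data \emph{plus} an $H^1\times L^2$ perturbation). This is precisely why the paper does not work with a solvability set. It instead defines explicit sets $\Theta_n(t_\ast)$ in terms of Strichartz properties of $S(t)(\phi_0,\phi_1)$ alone, proves the deterministic time-shift invariance \eqref{STheta_n} of these sets, intersects over $n\in\N$ (to handle arbitrary perturbation size) and $t_\ast\in\mathcal T$, and finally takes $\Sigma_{\mathcal T}=\tilde\Theta_{\mathcal T}+H^1\times L^2$. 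The ``countable intersection over $\mathcal T$'' you invoke handles only the $t_0$-dependence; the intersection over $n$ is equally essential and is missing from your argument.

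\textbf{Gap 2: the consequence for general $t\in\R$ is a non-sequitur.} You take $\Sigma=\Sigma_\Q$ and argue that $\Phi(t)(\Sigma)\subset\Theta$, concluding that $\Phi(t)(\Sigma)$ has full measure. But being a subset of a full-measure set says nothing about one's own measure. The paper repairs this by setting $\Sigma=\bigcup_{t\in\R}\Sigma_{t\Z}$; then for any fixed $t$, $\Sigma_{t\Z}=\Phi(t)(\Sigma_{t\Z})\subset\Phi(t)(\Sigma)$, and since $\Sigma_{t\Z}$ has full measure, so does $\Phi(t)(\Sigma)$.
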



\begin{proof}
Let $t_\ast\in\R$. 
We first find full measure sets $\Theta$ and $\Theta_n(t_\ast)$ of initial data 
in $H^s(\R^d)\times H^{s-1}(\R^d)$ with $0<  s\leq 1$ if $d=4$, and $0\leq s\leq 1$ if $d=5$,
that give rise to unique global solutions of the energy-critical defocusing NLW on $\R^d$. 
This is merely a reformulation of Theorem \ref{GWP} and
Corollary \ref{GWPcor}.
These sets will then be used to construct 
a set of full measure $\Sigma$ 
such that the flow of equation \eqref{NLW} is globally defined on $\Sigma$
and $\Phi(t)(\Sigma)$ is of full measure for all $t\in\R$.

In the following, we present the proof for the case $d=4$.
Let $0<\gamma<\frac 13$. Given $A, K>0$, we recall  
$\tau (A, K,\gamma)$ defined in Proposition \ref{prop:main}.
For a finite time time interval $I=[a,b]$ and $\tau_\ast>0$,
we denote by $\mathfrak{I}_k(I)=[a+k\tau_\ast, a+(k+1)\tau_\ast]$, $k=0,1,\dots,$
the $\Big[\frac{|I|}{\tau_\ast}\Big]$ or $\Big[\frac{|I|}{\tau_\ast}\Big]+1$
subintervals of length $\tau_\ast$ covering $I$.
For $0<s\leq 1$, we then consider the set 
\begin{align*}
\Theta :=\bigg\{&(\phi_0,\phi_1)\in H^s(\R^4)\times H^{s-1}(\R^4):
S(t)(\phi_0,\phi_1)\in L^3_{t,{\rm loc}}L^6_x\cap L^1_{t,{\rm loc}}L^\infty_x, \text{ there exist } \\ &C,K>0 \text{ such that } \|S(t)(\phi_0,\phi_1)\|_{X(\mathfrak{I}_k(I)\times\R^4)}\leq K\tau_\ast^\gamma \text{ for all } I=[-T,T], 0< T<\infty,\\
&k=0,1,\dots, \text{ and some } 0<\tau_\ast (I) \leq 
\tau \bigg(C\|S(t)(\phi_0,\phi_1)\|_{X(I\times\R^4)}^3e^{C\|S(t)(\phi_0,\phi_1)\|_{L^1_IL^\infty_x}}, K,\gamma\bigg)
\bigg\}.
\end{align*}

\noindent
We have seen in the proofs of Proposition \ref{prop:energy}, Proposition \ref{almost_almost}, and Theorem \ref{GWP} that $\mu_{(u_0,u_1)}(\Theta)=1$
for any $\mu_{(u_0,u_1)}\in\mathcal M_s$.
Moreover, for any $(\phi_0,\phi_1)\in \Theta$,
there exists a unique global solution of equation \eqref{NLW}
with initial data $(u(0),\pa_t u(0))=(\phi_0,\phi_1)$.

Similarly, for $0<s\leq 1$, $t_\ast\in \R$, and $n\in\N$, we define
\begin{align*}
\Theta_n (t_\ast) : =\bigg\{&(\phi_0,\phi_1)\in H^s(\R^4)\times H^{s-1}(\R^4):
S(t)(\phi_0,\phi_1)\in L^3_{t,{\rm loc}}L^6_x \cap L^1_{t,{\rm loc}}L^\infty_x, \text{ there exist }\\ &C,K>0 \text{ such that } \|S(t)(\phi_0,\phi_1)\|_{X(\mathfrak{I}_k(I)\times\R^4)}\leq K\tau_\ast^\gamma \text{ for all } I=[t_\ast-T, t_\ast+T], \\
&0<T<\infty, k=0,1,\dots, \text{ and some } \\
&0<\tau_\ast (I) \leq 
\tau \left(C\left(n+n^2+\|S(t)(\phi_0,\phi_1)\|_{X(I\times\R^4)}^3\right)e^{C\|S(t)(\phi_0,\phi_1)\|_{L^1_IL^\infty_x}}, K,\gamma\right)
\bigg\}.
\end{align*}

\noindent
Notice that $\Theta=\Theta_0 (0)$. 
By Proposition \ref{prop:main}, we have that $\tau(A,K,\gamma)$
is non-increasing in $A$. As a consequence, $\Theta_n(t_\ast)\subset \Theta_m(t_\ast)$
for all $n\geq m$ and $t_\ast\in\R$.
By Corollary \ref{GWPcor} and its proof based on 
Proposition \ref{prop:energy} and Proposition \ref{almost_almost},
we have that 
$\mu_{(u_0,u_1)}(\Theta_n(t_\ast))=1$
for all $\mu_{(u_0,u_1)}\in\mathcal M_s$, $n\in\N$, and $t_\ast\in\R$.
Moreover, for any $t_\ast\in\R$, $(v_0,v_1)\in H^1(\R^4)\times L^2 (\R^4)$ with 
$\|(v_0,v_1)\|_{H^1\times L^2}\leq n$, and any
$(\phi_0,\phi_1)\in\Theta_n (t_\ast)$,
the defocusing cubic NLW on $\R^4$ with initial data
\begin{equation}\label{dataTheta_n}
(u, \pa_t u)\Big|_{t=t_\ast}=S(t_\ast)(\phi_0,\phi_1)+(v_0,v_1)
\end{equation}
admits a unique global solution in the class 
\begin{align}\label{uTheta_n}
(u,\pa_tu)&\in \Big(S(t)(\phi_0,\phi_1), \pa_t S(t)(\phi_0,\phi_1)\Big)+C\left(\R; H^1(\R^4)\times L^2(\R^4)\right).
\end{align}

Next, we show that 
for any $t_0\in\R$, $n\in\N$, and $(\phi_0,\phi_1)\in\Theta_n (t_\ast)$,
we have
\begin{equation}\label{STheta_n}
(\psi_0,\psi_1):=\big(S(t_0)(\phi_0,\phi_1), \pa_t S(t_0)(\phi_0,\phi_1)\big) \in \Theta_n (t_\ast-t_0).
\end{equation}

\noindent
It follows easily that
\[S(t)(\psi_0,\psi_1)=S(t+t_0)(\phi_0,\phi_1)\in L^3_{t,{\rm loc}}L^6_x \cap L^1_{t,{\rm loc}}L^\infty_x.\]
Let $I= [t_\ast-T, t_\ast + T]$ for some $0<T<\infty$.
By the second condition in the definition of $\Theta_n (t_\ast)$
applied to $(\phi_0,\phi_1)$ on the interval $[t_\ast-T, t_\ast + T]$,
it follows that 
\[\|S(t-t_0)(\psi_0,\psi_1)\|_{X(\mathfrak{I}_k([t_\ast-T, t_\ast + T])\times\R^4)}=\|S(t)(\phi_0,\phi_1)\|_{X(\mathfrak{I}_k([t_\ast-T, t_\ast + T])\times\R^4)}\leq K\tau_\ast^\gamma\]
for all $k$, and for some positive $\tau_\ast$ satisfying
\[\tau_\ast\leq \tau \left(C\left(n+n^2+\|S(t-t_0)(\psi_0,\psi_1)\|_{X([t_\ast-T, t_\ast + T]\times\R^4)}^3\right)e^{C\|S(t-t_0)(\psi_0,\psi_1)\|_{L^1_{[t_\ast-T, t_\ast + T]}L^\infty_x}}, K,\gamma\right).\]
Therefore, a simple change of variables shows that
$(\psi_0,\psi_1)$ satisfies the second condition in the definition of $\Theta_n (t_\ast-t_0)$ on the interval $[t_\ast-t_0-T, t_\ast -t_0+ T]$. Hence, $(\psi_0,\psi_1)\in \Theta_n (t_\ast -t_0)$
and \eqref{STheta_n} is proved. 

Let $\mathcal T$ be a countable subgroup of $(\R,+)$.
We then define
\[\tilde \Theta_\mathcal T:=\bigcap_{t_\ast\in\mathcal T}\bigcap_{n\in\N}\Theta_n (t_\ast).\]
Notice that $\mu_{(u_0,u_1)}(\tilde \Theta_\mathcal T)=1$
for all $\mu_{(u_0,u_1)}\in\mathcal M_s$,
since 
$\tilde\Theta_\mathcal T$ is a countable intersection of full measure sets
$\Theta_n (t_\ast)$.
By \eqref{STheta_n}
and since $\mathcal T-t=\mathcal T$ for any $t\in\mathcal T$,
it follows that for all $t\in \mathcal T$ and $(\phi_0,\phi_1)\in\tilde\Theta_\mathcal T$, we have
\begin{equation}\label{STheta}
\big(S(t)(\phi_0,\phi_1), \pa_t S(t)(\phi_0,\phi_1)\big) \in \tilde\Theta_\mathcal T.
\end{equation}

Using the fact that $0\in\mathcal T$ and thus
$\tilde \Theta_\mathcal T \subset \bigcap_{n\in\N} \Theta_n (0)$,
it follows 
from the above discussion regarding the properties of $\Theta_n(t_\ast)$
that equation \eqref{NLW} admits a unique global solution 
with initial data at $t=0$ of the form
$(\phi_0,\phi_1)+(v_0,v_1)$,
where
$(\phi_0,\phi_1)\in\tilde\Theta_\mathcal T$ and $(v_0,v_1)\in H^1(\R^4)\times L^2(\R^4)$.
Moreover, by \eqref{uTheta_n} and \eqref{STheta}, we have
\[\left(u(t),\pa_t u(t)\right)\in \tilde\Theta_\mathcal T + H^1(\R^4)\times L^2(\R^4) \text{ for all } t\in \mathcal T.\]

\noindent
In other words, setting $\Sigma_\mathcal T:=\tilde\Theta_\mathcal T + H^1(\R^4)\times L^2(\R^4)$,
the flow of equation \eqref{NLW} is defined globally in time on $\Sigma_\mathcal T$ and has the property that
\[\Phi(t)\left(\Sigma_\mathcal T\right)\subset \Sigma_\mathcal T \text{ for all } t\in \mathcal T.\]

\noindent
Using the time reversibility of the equation
and the fact that $-t\in\mathcal T$ for all $t\in\mathcal T$, it then follows that
$\Phi(t)\left(\Sigma_\mathcal T\right)= \Sigma_\mathcal T$ for all $t\in \mathcal T$. 
Moreover, $\mu_{(u_0,u_1)}(\Sigma_\mathcal T)=1$
for all $\mu_{(u_0,u_1)}\in\mathcal M_s$,
since $\tilde\Theta_\mathcal T$ is of full measure.

Lastly, noticing that given $t\in\R$ arbitrary, 
$t\Z$ is a countable subgroup of $(\R,+)$ containing $t$, we define 
\[\Sigma :=\bigcup_{t\in\R} \Sigma_{t\Z}.\]
It easily follows that $\Sigma$ is of full measure and the flow of equation \eqref{NLW} is globally 
defined on $\Sigma$. 
Moreover,
\begin{align*}
\Sigma_{t\Z}=\Phi(t) (\Sigma_{t\Z})\subset \Phi(t)(\Sigma).
\end{align*}
Therefore,
\[\mu_{(u_0,u_1)}\left(\Phi(t)(\Sigma)\right)=1\]
for all $t\in\R$ and $\mu_{(u_0,u_1)}\in\mathcal M_s$.
This completes the proof of Proposition \ref{invariant_Sigma} in the case $d=4$.

The proof for $d=5$ is completely analogous. 
The only difference is the definition of $\Theta_n(t_\ast)$
(and thus also the definition of $\Theta=\Theta_0(0)$) which, for $d=5$, becomes
\begin{align*}
\Theta_n \big(t_\ast, &\R^5\big) : =\bigg\{(\phi_0,\phi_1)\in H^s(\R^5)\times H^{s-1}(\R^5):
S(t)(\phi_0,\phi_1)\in L^{\frac 73}_{t,{\rm loc}}L^{\frac{14}{3}}_x \cap L^1_{t,{\rm loc}}L^{10}_x, \\ 
&\text{there exist }C,K>0 \text{ such that } \|S(t)(\phi_0,\phi_1)\|_{X(\mathfrak{I}_k(I)\times\R^5)}\leq K\tau_\ast^\gamma \\
&\text{for all } I=[t_\ast-T, t_\ast+T], 0<T<\infty, k=0,1,\dots, \text{ and some } \\
&0<\tau_\ast (I) \leq 
\tau \left(C\Big(n+n^{\frac 53}+\|S(t)(\phi_0,\phi_1)\|_{X(I\times\R^5)}^{\frac 73}+\|S(t)(\phi_0,\phi_1)\|_{L^1_IL^{10}_x}^{5}\Big), K,\gamma\right)
\bigg\},
\end{align*}
where $0\leq s\leq 1$ and $0<\gamma<\frac{3}{7}$.

\end{proof}


\section{Probabilistic continuous dependence of the flow}

In this section
we prove the probabilistic continuity of the flow of 
the energy-critical defocusing cubic NLW \eqref{NLW} on $\R^4$ in $H^s(\R^4)\times H^{s-1}(\R^4)$,
$0<s\leq 1$, with respect to the initial data.
The notion of probabilistic continuity that we use here
was first proposed by Burq and Tzvetkov in \cite{BT3}.
For the readers' convenience, we first recall two lemmas
from Appendix A.2 of \cite{BT3}, that will be used in the proof
of Proposition \ref{cond_proba} below.

\begin{lemma}[Lemma A.9 in \cite{BT3}]\label{lemma:A9}
If $h$ is a Bernoulli random variable independent of 
a real random variable $g$ with symmetric distribution $\theta$,
then $h g$ has the same distribution $\theta$ as $g$. 
\end{lemma}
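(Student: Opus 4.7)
The plan is to verify that $hg$ and $g$ have the same distribution by checking that $\E[f(hg)] = \E[f(g)]$ for every bounded measurable test function $f\colon\R\to\R$. The natural first step is to condition on the value of $h$: since a Bernoulli variable takes the values $\pm 1$, each with probability $\tfrac{1}{2}$, and since $h$ is independent of $g$, the tower property will give
\[
\E[f(hg)] \;=\; \tfrac{1}{2}\,\E[f(g)] \;+\; \tfrac{1}{2}\,\E[f(-g)].
\]

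Next I would rewrite both terms as integrals against the distribution $\theta$ of $g$. The first piece is immediately $\tfrac{1}{2}\int_\R f(x)\,d\theta(x)$ by definition of $\theta$. For the second, since $f$ is bounded and hence lies in $L^1(d\theta)$, I can apply the hypothesis that $\theta$ is symmetric (with test function $x\mapsto f(x)$) to get $\E[f(-g)] = \int_\R f(-x)\,d\theta(x) = \int_\R f(x)\,d\theta(x)$. Adding the two halves yields $\E[f(hg)] = \int_\R f(x)\,d\theta(x) = \E[f(g)]$, which is exactly the statement that $hg$ is distributed according to $\theta$.

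The argument is entirely routine and I do not expect any real obstacle; the only step that warrants a moment of care is the use of independence when splitting the expectation by conditioning on $h$, which is a textbook application of the tower property. The symmetry hypothesis on $\theta$ does the substantive work, precisely matching the two halves of the decomposition produced by the sign of $h$.
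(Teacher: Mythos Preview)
Your argument is correct and is the standard proof: condition on the two values of $h$, use independence to factor, then invoke the symmetry of $\theta$ to identify $\E[f(-g)]$ with $\E[f(g)]$. The paper does not give its own proof of this lemma but simply quotes it as Lemma~A.9 from \cite{BT3}, so there is nothing further to compare against.
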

%

\begin{lemma}[Lemma A.8 in \cite{BT3}]\label{lemma:A8}
Let $Y_j$, $j=1,2$, be two Banach spaces endowed with measures $\mu_j$.
Let $f:Y_1\times Y_2 \to \C$ and $g_1,g_2:Y_2\to\C$
be three measurable functions.
Then
\begin{align*}
\mu_1\otimes \mu_2& \left((x_1,x_2)\in Y_1\times Y_2:
|f(x_1,x_2)|>\lambda \,
\Big|\, 
|g_1(x_2)|\leq \eps, \, |g_2(x_2)|\leq R\right)\\
&\leq \sup_{x_2\in Y_2, |g_1(x_2)|\leq \eps, |g_2(x_2)|\leq R}
\mu_1\left(x_1\in Y_1: |f(x_1,x_2)|>\lambda\right).
\end{align*}

\end{lemma}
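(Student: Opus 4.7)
The plan is to unpack the conditional probability via its definition, then apply Fubini–Tonelli to the numerator and pointwise bound the $x_2$-integrand by its supremum. In the context of the paper $\mu_1, \mu_2$ are probability measures (induced by the randomization), so I will tacitly assume $\mu_1(Y_1)=\mu_2(Y_2)=1$; the statement is vacuous unless $B:=\{x_2\in Y_2:|g_1(x_2)|\leq\eps,\ |g_2(x_2)|\leq R\}$ has positive $\mu_2$-measure, so I may assume $\mu_2(B)>0$ throughout.

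First, set $A:=\{(x_1,x_2)\in Y_1\times Y_2:|f(x_1,x_2)|>\lambda\}$. By definition of conditional probability,
\[
\mu_1\otimes\mu_2\bigl(A\,\bigl|\,Y_1\times B\bigr)
=\frac{(\mu_1\otimes\mu_2)\bigl(A\cap(Y_1\times B)\bigr)}{(\mu_1\otimes\mu_2)(Y_1\times B)}
=\frac{(\mu_1\otimes\mu_2)\bigl(A\cap(Y_1\times B)\bigr)}{\mu_2(B)},
\]
using $\mu_1(Y_1)=1$ in the denominator. Next I would invoke Fubini–Tonelli on the (nonnegative, measurable) indicator $\mathbf 1_{A\cap(Y_1\times B)}$ to write
\[
(\mu_1\otimes\mu_2)\bigl(A\cap(Y_1\times B)\bigr)
=\int_{B}\mu_1\bigl(\{x_1\in Y_1:|f(x_1,x_2)|>\lambda\}\bigr)\,d\mu_2(x_2),
\]
which is legitimate because measurability of $f$ guarantees that the section map $x_2\mapsto\mu_1(\{x_1:|f(x_1,x_2)|>\lambda\})$ is $\mu_2$-measurable, and measurability of $g_1, g_2$ ensures $B$ is $\mu_2$-measurable.

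Finally, I would bound the integrand by its supremum over $x_2\in B$:
\[
\int_{B}\mu_1\bigl(\{x_1:|f(x_1,x_2)|>\lambda\}\bigr)\,d\mu_2(x_2)
\leq \Bigl(\sup_{x_2\in B}\mu_1\bigl(\{x_1:|f(x_1,x_2)|>\lambda\}\bigr)\Bigr)\mu_2(B).
\]
Dividing by $\mu_2(B)$ cancels the factor and yields exactly the claimed inequality. There is no real obstacle here; the only points requiring a line of justification are the measurability of the $x_2$-section function (standard consequence of Fubini applied to $\mathbf 1_A$) and the implicit normalization $\mu_1(Y_1)=1$, which is built into the probabilistic setting of the paper. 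If one wanted to state the lemma without assuming $\mu_1$ is a probability, the same computation yields the same conclusion after replacing the denominator by $\mu_1(Y_1)\mu_2(B)$ and noting that $\mu_1(\{x_1:|f(x_1,x_2)|>\lambda\})\leq \mu_1(Y_1)\cdot\tfrac{\mu_1(\{x_1:|f(x_1,x_2)|>\lambda\})}{\mu_1(Y_1)}$.
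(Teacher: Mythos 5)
Your argument is correct: the paper itself gives no proof of this lemma (it is quoted from Appendix A.2 of \cite{BT3}), and your route — unfolding the conditional probability, applying Fubini--Tonelli to the indicator of $A\cap(Y_1\times B)$, and bounding the $x_2$-section measure by its supremum over the conditioning set $B$ — is exactly the standard argument used there. The tacit normalization $\mu_1(Y_1)=1$ is harmless, since in every application in the paper $\mu_1$ and $\mu_2$ are probability measures.
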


Building upon the improved local-in-time Strichartz 
estimates in Proposition \ref{proba_S},
one can use the strategy developed by
Burq and Tzvetkov in Appendix A.2 of \cite{BT3}
to obtain the following result.


\begin{proposition}\label{cond_proba}
Let $d=4$. Assume that in the definition of the randomization
\eqref{R1}, the smooth cutoff $\psi$
is replaced
by the characteristic function $\chi_{Q_0}$ of the unit cube $Q_0$ centered at the origin.
Assume also that the probability distributions $\mu_{0,j}$, $\mu_{n,j}^{(1)}$, $\mu_{n,j}^{(2)}$,
$n\in\mathcal I$, $j=0,1$ are symmetric. 

Let $0<s\leq 1$, $T> 0$, and $\mu\in \mathcal{M}_{s}$.
Then, given $1\leq q<\infty$ and $2\leq r\leq\infty$, 
there exist constants $C,c>0$
such that for every $\eps, \lambda, \Lambda, R>0$, we have
\begin{align}\label{mu_times_mu}
\mu \otimes \mu
&\Big( \left((w_0,w_1), (w_0',w_1')\right)\in \left(H^{s}(\R^4)\times H^{s-1}(\R^4)\right)^2:\notag\\
&
\left\|S(t)(w_0-w_0', w_1-w_1')\right\|_{L^q_t([0,T]; L^r_x)}>\lambda \text{ or }
\left\|S(t)(w_0+w_0', w_1+w_1')\right\|_{L^q_t([0,T]; L^r_x)}>\Lambda
\notag\\
& \Big| \quad
\left\|(w_0-w_0', w_1-w_1')\right\|_{H^{s}\times H^{s-1}}\leq \eps
\text{  and } \left\|(w_0+w_0', w_1+w_1')\right\|_{H^{s}\times H^{s-1}}
\leq R\Big)\notag\\
&\leq C\left(e^{-c\frac{\lambda^2}{\max(1,T^2)T^{\frac 2q}\eps^2}}+e^{-c\frac{\Lambda^2}{\max(1,T^2)T^{\frac 2q}R^2}}\right).
\end{align}
\end{proposition}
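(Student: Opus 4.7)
My plan is to adapt the strategy of Burq and Tzvetkov from \cite{BT3}, combined with the improved local-in-time Strichartz estimates of Proposition \ref{proba_S}. First, using $P(E_1 \cup E_2 \mid C) \leq P(E_1 \mid C) + P(E_2 \mid C)$, I split the union of the two bad events in \eqref{mu_times_mu}, reducing the claim to separately showing
\begin{equation*}
\mu \otimes \mu \bigl(\|S(t)(w_0 - w_0', w_1 - w_1')\|_{L^q_t([0,T]; L^r_x)} > \lambda \,\bigm|\, G_- \leq \eps,\, G_+ \leq R\bigr) \leq C e^{-c\lambda^2/(\max(1,T^2) T^{2/q} \eps^2)}
\end{equation*}
and the analogous estimate with $(\Lambda, R, G_+)$ in place of $(\lambda, \eps, G_-)$, where $G_\pm := \|(w_0 \pm w_0', w_1 \pm w_1')\|_{H^s \times H^{s-1}}$. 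The two cases are symmetric, so I describe only the difference.

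Next, I exploit the symmetry of the distributions via the Bernoulli trick underlying Lemmas \ref{lemma:A9}--\ref{lemma:A8}. Set $A_{n,j} := g_{n,j}(\omega) + g_{n,j}(\omega')$ and $B_{n,j} := g_{n,j}(\omega) - g_{n,j}(\omega')$. The swap $(\omega,\omega') \leftrightarrow (\omega',\omega)$ fixes each $A_{n,j}$ and sends $B_{n,j}$ to $-B_{n,j}$; since $g_{n,j}(\omega)$ and $g_{n,j}(\omega')$ are independent and identically distributed, the joint distribution of $(A_{n,j}, B_{n,j})_{n,j}$ coincides with that of $(A_{n,j}, -B_{n,j})_{n,j}$. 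Introducing independent Bernoulli random variables $\{h_{n,j}\}$ on an auxiliary factor of the probability space, mixing yields that $(A_{n,j}, B_{n,j})_{n,j}$ has the same joint distribution as $(A_{n,j}, h_{n,j} B_{n,j})_{n,j}$. Crucially, because the smooth cutoff $\psi$ has been replaced by $\chi_{Q_0}$, the Fourier supports of the $\chi_{Q_0}(D-n) u_j$ are pairwise disjoint, so $G_-$ depends only on $(|B_{n,j}|^2)_{n,j}$ and is invariant under $B \mapsto hB$. Via Fubini and Lemma \ref{lemma:A8}, the conditional probability above is bounded by
\begin{equation*}
\sup_{(B_{n,j})\,:\,G_-(B) \leq \eps} P_h\Bigl( \bigl\|S(t)\bigl(\textstyle\sum_n h_{n,0} B_{n,0} \chi_{Q_0}(D-n) u_0,\, \sum_n h_{n,1} B_{n,1} \chi_{Q_0}(D-n) u_1\bigr)\bigr\|_{L^q_t L^r_x} > \lambda \Bigr).
\end{equation*}

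For a fixed sequence $(B_{n,j})$, the Bernoulli random variables $\{h_{n,j}\}$ verify condition \eqref{cond}, so I would run the argument of Proposition \ref{proba_S}(ii) verbatim with coefficients $B_{n,j}\chi_{Q_0}(D-n)u_j$ in place of $\chi_{Q_0}(D-n)u_j$. Minkowski's integral inequality, Khintchine-type bounds via Lemma \ref{basic_lemma}, pulling $\ell^2_n$ past $L^r_x$ (valid since $r \geq 2$), the isometry of $\cos(t|\nabla|)$ on $L^2$ (which removes the $t$-dependence of the $L^2$ norms), Bernstein's inequality \eqref{R4}, and the bound $|\sin(t|\xi|)/(t|\xi|)| \leq 1$ at the low-frequency $n=0$ term combine to yield, for $p$ sufficiently large,
\begin{equation*}
\bigl(\E_h \|S(t)(\,\cdot\,,\,\cdot\,)\|_{L^q_t([0,T]; L^r_x)}^p\bigr)^{1/p} \lesssim \sqrt{p}\, \max(1, T) \, T^{1/q} \,\sigma,
\end{equation*}
where $\sigma^2 := \sum_n |B_{n,0}|^2 \|\chi_{Q_0}(D-n) u_0\|_{L^2_x}^2 + \sum_n |B_{n,1}|^2 \|\chi_{Q_0}(D-n) u_1\|_{H^{-1}_x}^2$. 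The exact orthogonality from disjoint Fourier supports then gives $\sigma^2 = \|w_0 - w_0'\|_{L^2_x}^2 + \|w_1 - w_1'\|_{H^{-1}_x}^2 \leq \|(w_0 - w_0', w_1 - w_1')\|_{H^s \times H^{s-1}}^2 \leq \eps^2$ (using $0 \leq s \leq 1$ for the embeddings $H^s \hookrightarrow L^2$ and $H^{s-1} \hookrightarrow H^{-1}$). Chebyshev's inequality together with the standard optimization $p \sim \lambda^2 / (\max(1,T^2) T^{2/q} \eps^2)$ (with a separate bookkeeping for small $p$, as at the end of the proof of Proposition \ref{proba_S}(i)) then produces the claimed exponential bound. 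The main obstacle is the organization of the Bernoulli symmetrization so that the simultaneous conditioning on $G_+$ and $G_-$ factors correctly; this succeeds precisely because the hard cutoff $\chi_{Q_0}$ provides exact Fourier orthogonality, which is also what blocks a direct extension to the smooth cutoff $\psi$.
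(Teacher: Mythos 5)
Your overall architecture coincides with the paper's: reduce the conditional probability, via a Bernoulli symmetrization and Lemma \ref{lemma:A8}, to a supremum over fixed coefficients of a Bernoulli-randomized version of the improved local-in-time Strichartz estimates, and exploit the exact Fourier orthogonality of $\chi_{Q_0}(D-n)$ to see that the conditioning constraints are invariant under the sign flips. Where you differ is in the symmetrization device for the difference term: you use exchangeability of $(\omega,\omega')$ to get $(A_{n,j},B_{n,j})\overset{d}{=}(A_{n,j},h_{n,j}B_{n,j})$, which is a nice observation (it does not even use the symmetry of the $\mu_{n,j}^{(k)}$), whereas the paper multiplies each randomization by a common independent Bernoulli system and invokes Lemma \ref{lemma:A9} (so that $h_jg_j\overset{d}{=}g_j$), which treats the difference and the sum in one stroke.

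This difference is precisely where your argument has a gap: you assert that ``the two cases are symmetric,'' but under your swap the sum coefficients $A_{n,j}=g_{n,j}(\omega)+g_{n,j}(\omega')$ are \emph{fixed}, so the swap produces no Bernoulli randomization of the sum term at all, and your estimate for $\|S(t)(w_0+w_0',w_1+w_1')\|_{L^q_tL^r_x}$ does not follow from the argument you describe. To handle the sum you must fall back on the hypothesis that the distributions are symmetric: apply a common Bernoulli $h_{n,j}$ to both $g_{n,j}(\omega)$ and $g_{n,j}(\omega')$ (Lemma \ref{lemma:A9}), which simultaneously gives $A\overset{d}{=}hA$ and $B\overset{d}{=}hB$ and leaves both constraints $G_\pm$ invariant; this is exactly the paper's route, and it is the reason the symmetry assumption appears in the statement. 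A second, minor point: for $r=\infty$ you cannot run Proposition \ref{proba_S}(ii) ``verbatim,'' since the Chebyshev optimization there requires $p\geq r$; you need the Sobolev-embedding detour of Proposition \ref{proba_S}(iii), which is where the hypothesis $s>0$ enters.
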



\begin{proof}
For simplicity of notations, in the following we assume $T\geq 1$. 
The case $0<T<1$ is completely analogous, with the only change coming from 
Proposition \ref{proba_S} (ii) and (iii).

We consider $Y:=\R\times \R^{\mathcal I}$
equipped with the Banach structure induced by the $\ell^\infty$-norm,
where $\mathcal I$ is the index set in \eqref{index} for $d=4$.
If $\mathcal{B}(\R)$ denotes the Borel $\sigma$-algebra of $\R$, 
we endow $Y$
with the $\sigma$-algebra 
$\mathcal{B}(\R)^{\otimes \{0\}\cup \mathcal I}$
generated by 
\[\left\{\underset{n\in\{0\}\cup \mathcal I}{\times} A_n:
A_n \in \mathcal{B}(\R) \text{ and } A_n=\R
\text{ except for finitely many } n\right\}.\]
Let $\{k_n\}_{n\in  \{0\} \cup \mathcal I}$ be a 
system of independent Bernoulli variables.
Then, the map
\[\omega\mapsto \{k_n(\omega)\}_{n\in  \{0\} \cup \mathcal I}\]
is measurable and we endow $Y$ with the probability measure induced by this map:
\[\mu_0(A):=P\left(\left\{k_n(\omega)\right\}_{n\in  \{0\} \cup \mathcal I}\in A\right) \quad \text{for any } \quad
A\in \mathcal B(\R)^{\otimes \{0\}\cup \mathcal I}.\]

For $\{h_n\}_{n\in \{0\}\cup\mathcal I}$, we set $h_{-n}=h_n$,
which defines $h_n$ for all $n\in\Z^4$. Then, for 
$\phi\in L^2(\R^4)$ real-valued, we set
\begin{equation}\label{h}
h\odot \phi: =\sum_{n\in\Z^4} h_n\chi_{Q_0} (D-n)\phi = h_0\chi_{Q_0} (D)\phi +2\text{Re} \sum_{n\in\mathcal I} h_n \chi_{Q_0}(D-n) \phi,
\end{equation}

\noindent
where $\chi_{Q_0}$ is 
the characteristic function of the unit cube centered at the origin
$Q_0=\Big[-\frac 12,\frac 12\Big)^4$.
Since 
\begin{equation}\label{support}
(n+Q_0)\cap (m+Q_0)=\emptyset
\text{ for any } n,m\in\Z^4,  \quad n\neq m,
\end{equation} 

\noindent
we have
\begin{align}
h\odot \phi^\omega&=\sum_{n\in \Z^4} h_n \chi_{Q_0} (D-n) \phi^\omega
=\sum_{n\in\Z^4} h_n\chi_{Q_0}(D-n)\sum_{m\in\Z^4}g_m(\omega)\chi_{Q_0}(D-m)\phi\\
&=\sum_{n\in\Z^4} h_n g_n (\omega) \chi_{Q_0}(D-n) \phi
=h_0g_0(\omega)\chi_{Q_0}(D)\phi
+\text{Re}\,\sum_{n\in\mathcal I} h_ng_n (\omega) \chi_{Q_0}(D-n) \phi\notag\\
&=h_0g_0(\omega)\chi_{Q_0}(D)\phi
+\sum_{n\in\mathcal I} \Big(h_n \text{Re}\, g_n (\omega) \text{Re}\,\chi_{Q_0}(D-n) \phi
-h_n \text{Im}\, g_n (\omega) \text{Im}\,\chi_{Q_0}(D-n) \phi\Big).\notag
\label{hg}
\end{align}

Let $\{h_{n,j}\}_{n\in\{0\}\cup\mathcal I}$, $j=0,1$
be two systems of independent Bernoulli random variables
such that $\left\{h_{0,j}, g_{0,j}, h_{n,j}, \text{Re}\, g_{n,j}, \text{Im}\, g_{n,j}\right\}_{n\in\mathcal I, j=0,1}$ are independent.
For $j=0,1$, we then 
consider the following random variables
with values in  
$\R^{\Z^4}$
endowed with the $\sigma$-algebra 
$\mathcal{B}(\R)^{\otimes \Z^4}$:
\begin{align*}
g_j:=\left(
\begin{matrix}
g_{0,j}\\
\text{Re}\, g_{n,j}\\
\text{Im}\, g_{n,j}
\end{matrix}
\right)_{n\in\mathcal I}, 
\quad 
h_jg_j:=\left(
\begin{matrix}
h_{0,j}g_{0,j}\\
h_{n,j} \text{Re}\, g_{n,j}\\
h_{n,j} \text{Im}\, g_{n,j}
\end{matrix}
\right)_{n\in\mathcal I}
: \Omega
\to \R\times \R^{\mathcal I}\times\R^{\mathcal I}=\R^{\Z^4}.
\end{align*}

\noindent
By the independence 
of $\left\{h_{0,j}, g_{0,j}, h_{n,j}, \text{Re}\, g_{n,j}, \text{Im}\, g_{n,j}\right\}_{n\in\mathcal I, j=0,1}$ and the fact that $g_{0,j}$, $\text{Re}\, g_{n,j}$, and $\text{Im}\, g_{n,j}$ are symmetric, it then follows using Lemma \ref{lemma:A9} that the probability distributions $\theta_{h_jg_j}$ and $\theta_{g_j}$ of $h_jg_j$ and $g_j$ respectively, coincide.
As a consequence, if $\mu= \mu_{(u_0,u_1)}$ for some $(u_0,u_1)\in H^s(\R^4)\times H^{s-1}(\R^4)$,
we have for all $\lambda>0$ that
\begin{align*}
\mu&\otimes\mu_0\otimes \mu_0
\left( (w_0,w_1)\in H^{s}\times H^{s-1}, (h_0,h_1)\in Y\times Y: 
\big\| S(t)\big(h_0\odot w_0, h_1\odot w_1\big)\big\|_{L^q_tL^r_x}>\lambda \right)\\
&=P\left(
\left\|S(t)\left(h_0(\omega)g_0(\omega)\odot u_0, h_1(\omega)g_1(\omega)\odot u_1\right)\right\|_{L^q_tL^r_x}>\lambda \right)\\
&=\int_{\R^{\Z^4}\times\R^{\Z^4}} \pmb{1}_{\left\{\left\|S(t)\left(y\odot u_0, y'\odot u_1\right)\right\|_{L^q_tL^r_x}>\lambda \right\}}d\theta_{h_0g_0}(y)d\theta_{h_1g_1}(y')\\
&=\int_{\R^{\Z^4}\times\R^{\Z^4}} \pmb{1}_{\left\{\left\|S(t)\left(y\odot u_0, y'\odot u_1\right)\right\|_{L^q_tL^r_x} >\lambda \right\}}d\theta_{g_0}(y)d\theta_{g_1}(y')\\
&=P\left(\left\|S(t)\left(g_0(\omega)\odot u_0, g_1(\omega)\odot u_1\right)\right\|_{L^q_tL^r_x} >\lambda \right)\\
&=\mu
\left( (w_0,w_1)\in H^{s}(\R^4)\times H^{s-1}(\R^4): \|S(t)(w_0, w_1)\|_{L^q_tL^r_x} >\lambda \right).
\end{align*}

\noindent
Arguing analogously, we obtain that the following two quantities are equal:
\begin{align*}
M_1:=\mu \otimes \mu\Big(&
\left((w_0,w_1), (w_0',w_1')\right)\in \left(H^{s}(\R^4)\times H^{s-1}(\R^4)\right)^2:\\
&
\left\|S(t)(w_0-w_0', w_1-w_1')\right\|_{L^q_t([0,T]; L^r_x)}>\lambda
\quad \Big| \quad\\
&\left\|(w_0-w_0', w_1-w_1')\right\|_{H^{s}\times H^{s-1}}\leq \eps,\,
\left\|(w_0+w_0', w_1+w_1')\right\|_{H^{s}\times H^{s-1}}\leq R\Big)\\
=\mu \otimes \mu\otimes \mu_0\otimes \mu_0\Big(
&\left((w_0,w_1), (w_0',w_1')\right)\in \big(H^{s}(\R^4)\times H^{s-1}(\R^4)\big)^2, (h_0,h_1)\in Y\times Y:\\
&\left\|S(t)\left(h_0\odot(w_0-w_0'), h_1\odot(w_1-w_1')\right)\right\|_{L^q_t([0,T]; L^r_x)}>\lambda
\quad \Big| \quad\\
&\left\|\left(h_0\odot(w_0-w_0'), h_1\odot(w_1-w_1')\right)\right\|_{H^{s}\times H^{s-1}}\leq \eps\notag\\
&
\text{  and } \left\|\left(h_0\odot(w_0+w_0'), h_1\odot(w_1+w_1')\right)\right\|_{H^{s}\times H^{s-1}}\leq R\Big).
\end{align*}

\noindent
Noticing by \eqref{h} and \eqref{support} that $\|h\odot \phi\|_{H^\sigma}=\|\phi\|_{H^\sigma}$ for any $\phi\in H^\sigma$, $\sigma\in\R$, and any Bernoulli random variable $h$,
it follows that
\begin{align*}
M_1=\mu \otimes \mu\otimes \mu_0\otimes & \mu_0\Big(
\left((w_0,w_1), (w_0',w_1')\right)\in \big(H^{s}(\R^4)\times H^{s-1}(\R^4)\big)^2, (h_0,h_1)\in Y\times Y:\\
&\left\|S(t)\left(h_0\odot(w_0-w_0'), h_1\odot(w_1-w_1')\right)\right\|_{L^q_t([0,T]; L^r_x)}>\lambda
\quad \Big| \quad\\
&\left\|(w_0-w_0', w_1-w_1')\right\|_{H^{s}\times H^{s-1}}\leq \eps,\,
\left\|(w_0+w_0', w_1+w_1')\right\|_{H^{s}\times H^{s-1}}\leq R\Big).
\end{align*}

\noindent
Then, by Lemma \ref{lemma:A8},
we obtain that
\begin{align*}
M_1\leq \sup_{\left\|(w_0-w_0', w_1-w_1')\right\|_{H^{s}\times H^{s-1}}\leq \eps}
&\mu_0\otimes \mu_0
\Big((h_0,h_1)\in Y\times Y: \\
&\left\|S(t)\left(h_0\odot(w_0-w_0'), h_1\odot(w_1-w_1')\right)\right\|_{L^q_t([0,T]; L^r_x)}>\lambda\Big).
\end{align*}

\noindent
By the improved local-in-time Strichartz estimates in Proposition \ref{proba_S} (ii) and (iii)
with Bernoulli random variables,
it then follows that
\[M_1\leq Ce^{-c\frac{\lambda^2}{T^{2+\frac 2q}\eps^2}}.\]

\noindent
Similarly, we obtain
\begin{align*}
M_2:&=\mu \otimes \mu \Big(
\left((w_0,w_1), (w_0',w_1')\right)\in \big(H^{s}(\R^4)\times H^{s-1}(\R^4)\big)^2:\\
&\hphantom{XXXx}\left\|S(t)(w_0+w_0', w_1+w_1')\right\|_{L^q_t([0,T]; L^r_x)}>\Lambda
\quad \Big| \quad\\
&\hphantom{XXXx}\left\|(w_0-w_0', w_1-w_1')\right\|_{H^{s}\times H^{s-1}}\leq \eps
\text{  and } \left\|(w_0+w_0', w_1+w_1')\right\|_{H^{s}\times H^{s-1}}\leq R\Big)\\
&\leq \sup_{\left\|(w_0+w_0', w_1+w_1')\right\|_{H^{s}\times H^{s-1}}\leq R}
\mu_0\otimes \mu_0
\Big((h_0,h_1)\in Y\times Y: \\
&\hphantom{XXXXXXXXXXXXX}\left\|S(t)\big(h_0\odot(w_0+w_0'), h_1\odot(w_1+w_1')\big)\right\|_{L^q_t([0,T]; L^r_x)}>\Lambda\Big)\\
&\leq Ce^{-c\frac{\Lambda^2}{T^{2+\frac 2q}R^2}}.
\end{align*}

\noindent
Therefore, \eqref{mu_times_mu} follows.
\end{proof}


In the remaining of this section, we prove the probabilistic continuity of the flow
with respect to the initial data in Theorem \ref{proba_cont}.

\begin{proof}[Proof of Theorem \ref{proba_cont}]
For simplicity of notations, in the following we assume $T\geq 1$.
The case $0<T<1$ is completely analogous.

\noindent 
{\bf Step 1} (Control of the linear parts of the solutions and of their difference).

Let $\{\eta_k\}_{k\in\N}$ be such that $\eta_k \in (0,1)$ and $\eta_k\searrow 0$ as $k\to\infty$.
By Proposition \ref{cond_proba}, for any 
$(q,r)\in \{ (3,6), (3,\infty), (2,8)\}$,
$\mu\in \mathcal{M}_{s}$,
$\alpha\in (0,\frac 12)$, 
and $L(\eta_k, T, R,\alpha)=L(\eta_k)$ to be chosen later such that $L(\eta_k)\to\infty$ as $k\to\infty$, we have that
\begin{align*}
\mu \otimes \mu\Big(
&((w_0,w_1), (w_0',w_1'))\in \big(H^{s}(\R^4)\times H^{s-1}(\R^4)\big)^2:
\|S(t)(w_0-w_0', w_1-w_1')\|_{L^q_TL^r_x}>\eta_k^{1-\alpha} \\
&\text{ or }
\|S(t)(w_0, w_1)\|_{L^q_TL^r_x}>L(\eta_k)
 \text{ or } \|S(t)(w_0', w_1')\|_{L^q_TL^r_x}>L(\eta_k)\\
&\quad \Big| \quad
\|(w_0-w_0', w_1-w_1')\|_{H^{s}(\R^4)\times H^{s-1}(\R^4)}\leq \eta_k,
(w_0, w_1), (w_0', w_1')\in B_R\Big)\\
& \leq C\left(e^{-\frac{c}{T^3\eta_k^{2\alpha}}}+ e^{-c\frac{L(\eta_k)^2}{T^3R^2}}\right)(1+ o(1))
\to 0 \text{ as } k\to\infty.
\end{align*}

\noindent
Therefore, when we estimate the conditional probability in Theorem \ref{proba_cont}, we can assume that 
\begin{align}
&\|z-z'\|_{L^3_TL^r_x}\leq \eta_k^{1-\alpha}, \,\,
\|z\|_{L^3_TL^r_x}\leq L(\eta_k), \, \text{ and } \,
\|z'\|_{L^3_TL^r_x}\leq L(\eta_k) \, \text{ for } \, r=6,\infty,\label{zz'}\\
&\|z-z'\|_{L^2_TL^8_x}\leq \eta_k^{1-\alpha}, \quad
\|z\|_{L^2_TL^8_x}\leq L(\eta_k), \quad \text{and} \quad
\|z'\|_{L^2_TL^8_x}\leq L(\eta_k),\label{zz'28}
\end{align}
where we set $z(t): = S(t)(w_0,w_1)$ and $z'(t):=S(t)(w_0', w_1')$.
We also denote by $v(t):=\Phi(t)(w_0,w_1)-z(t)$ and $v'(t):=\Phi(t)(w_0',w_1')-z'(t)$
the nonlinear parts of the solutions $\Phi(t)(w_0,w_1)$ and $\Phi(t)(w'_0,w'_1)$, respectively.


\medskip

\noindent
{\bf Step 2} (Control of Strichartz norms of the nonlinear parts of the solutions).

In the following we prove that
there exists $G(\eta_k)>0$ such that
\begin{align}
M(\eta_k):=\mu \otimes \mu\Big(
&((w_0,w_1), (w_0',w_1'))\in \big(H^{s}(\R^4)\times H^{s-1}(\R^4)\big)^2:\notag\\
&\|v\|_{L^3_{T}L^6_x}+\|v\|_{L^2_{T}L^8_x}>  G(\eta_k) \text{ or }
\|v'\|_{L^3_{ T}L^6_x}+\|v'\|_{L^2_{ T}L^8_x}>  G(\eta_k);\notag \\
&\|z\|_{L^3_TL^r_x}\leq L(\eta_k) \text{ and }
\|z'\|_{L^3_TL^r_x}\leq L(\eta_k) \text{ for }  r=6,\infty \notag\\
&\quad \Big| \quad
\|(w_0-w_0', w_1-w_1')\|_{H^{s}(\R^4)\times H^{s-1}(\R^4)}\leq \eta_k\notag\\
&
\hphantom{XXX}\text{  and } (w_0, w_1)\in B_R \text{ and }
 (w_0', w_1')\in B_R\Big)\to 0 \text{ as } k\to\infty.\label{M(eta)}
\end{align}

\noindent
As a consequence, in addition to \eqref{zz'} and \eqref{zz'28}, we can assume that 
\begin{align}
\|v\|_{L^3_TL^6_x}
&+\|v\|_{L^2_TL^8_x}
\leq G(\eta_k)\label{vv'Strich}\\
\|v'\|_{L^3_TL^6_x}
&+\|v'\|_{L^2_TL^8_x}
\leq G(\eta_k),\label{vv'Strich28}
\end{align}

\noindent
when we estimate the conditional probability in
Theorem \ref{proba_cont}.

Arguing as in the proof of Proposition \ref{cond_proba}, 
we have that
\begin{align*}
M(\eta_k)\leq \sup\Big\{&\mu_0\otimes \mu_0\Big((h_0,h_1)\in Y\times Y:
\|\wt v\|_{L^3_{ T}L^6_x}+\|\wt v\|_{L^2_{ T}L^8_x}> G(\eta_k) \text{ or }\\
&\quad \quad \quad \quad \|\wt v'\|_{L^3_{ T}L^6_x}+\|\wt v'\|_{L^2_{ T}L^8_x}>  
G(\eta_k); \\ 
&\quad \quad \quad \quad \|\wt z\|_{L^3_TL^r_x}\leq L(\eta_k)
\text{ and } \|\wt z'\|_{L^3_TL^r_x}\leq L(\eta_k) \text{ for } r=6,\infty\Big):\\
&
\hphantom{XX}\|(w_0-w_0', w_1-w_1')\|_{H^{s}(\R^4)\times H^{s-1}(\R^4)}\leq \eta_k, \quad (w_0, w_1), \, (w_0', w_1')\in B_R\Big\},
\end{align*} 

\noindent
where we denoted by $\wt z$ and $\wt v$ the linear and nonlinear parts of the solution 
with initial data $(h_0\odot w_0, h_1\odot w_1)$,
and by $\wt z '$ and $\wt v '$ the linear and nonlinear parts of the solution 
with initial data $(h_0\odot w_0', h_1\odot w_1')$.
Then, we can upper bound $M (\eta_k)$ by
\begin{align}
M(\eta_k)&\leq \sup_{ (w_0, w_1)\in B_R}
\mu_0\otimes \mu_0 \Big((h_0,h_1)\in Y\times Y:
\|\wt v\|_{L^3_{ T}L^6_x}+\|\wt v\|_{L^2_{ T}L^8_x}>  G(\eta_k),\notag\\
&\hphantom{XXXXXXXXXXXXXXXXXXXi}
 \|\wt z\|_{L^3_TL^r_x}\leq L(\eta_k) \text{ for } r=6, \infty\Big)\notag\\
&+\sup_{ (w_0', w_1')\in B_R}
\mu_0\otimes \mu_0 \Big((h_0,h_1)\in Y\times Y:
\|\wt v'\|_{L^3_{ T}L^6_x}+\|\wt v'\|_{L^2_{ T}L^8_x}>  G(\eta_k),\notag\\
&\hphantom{XXXXXXXXXXXXXXXXXXXi}
 \|\wt z'\|_{L^3_TL^r_x}\leq L(\eta_k) \text{ for } r=6, \infty\Big)\notag\\
 &=2\sup_{ (w_0, w_1)\in B_R}
P\Big(
\omega\in \tilde{\Omega} (w_0,w_1): \|\wt v^\omega\|_{L^3_{ T}L^6_x}+\|\wt v^\omega\|_{L^2_{ T}L^8_x}>  G(\eta_k),\notag\\
&\hphantom{XXXXXXXXXXXXXXXXXXi}
 \|\wt z^\omega\|_{L^3_TL^r_x}\leq L(\eta_k) \text{ for } r=6,\infty\Big)\notag\\
 &=:2\sup_{ (w_0, w_1)\in B_R}M_{(w_0,w_1)}(\eta_k).
 \label{Mm}
\end{align} 

\noindent
Here, the set 
$\tilde{\Omega}(w_0,w_1)\subset \Omega$ with $P(\wt \Omega(w_0,w_1))=1$ was defined in Theorem \ref{GWP}
and has the property that for all $\omega\in \tilde \Omega (w_0,w_1)$,
\eqref{NLW} admits a unique global solution $\tilde u^\omega$ with initial data $\left(\tilde u^\omega (0), \pa_t \tilde u^\omega (0)\right)=\left(h_0(\omega)\odot w_0, h_1(\omega)\odot w_1\right)$. 
We also denoted by
$\wt z^\omega$ and $\wt v^\omega$ 
the linear and nonlinear parts of the solution $\tilde u^\omega$.

We choose $\eps_k=\frac{T}{\log\frac{1}{\eta_k}}$ and
consider the set
\begin{align*}
\tilde \Omega_1(w_0,w_1, \eta_k):=\Big\{\omega\in \tilde\Omega_1 (w_0,w_1)\cap \Omega_{T,\eps_k}(w_0,w_1): \|\wt v^\omega &\|_{L^3_{T}L^6_x}+\|\wt v^\omega\|_{L^2_{T}L^8_x}>  G(\eta_k)
\text{ and }\\
&\|\wt z^\omega\|_{L^3_TL^r_x}\leq L(\eta_k) \text{ for } r=6,\infty \Big\},
\end{align*}

\noindent
where $\Omega_{T,\eps_k}(w_0,w_1)$
was defined in Proposition \ref{almost_almost}
and is such that $P\big(\Omega_{T,\eps_k}^c(w_0,w_1)\big)<\eps_k$.
We then have that
\begin{align*}
M_{(w_0,w_1)}(\eta_k)\leq 
 P\left(\tilde \Omega_1(w_0,w_1, \eta_k)\right)+\eps_k.
\end{align*}

\noindent
Next, we show that
there exists $G (\eta_k)=G\left(\eta_k, L(\eta_k), T, R\right)$
such that, if $ (w_0, w_1)\in B_R$,
 then 
$\tilde \Omega_1(w_0,w_1, \eta_k)=\emptyset$.
In particular, this shows that
\begin{equation}\label{M_u_0_u_1}
M_{(w_0,w_1)}(\eta_k)\leq \frac{T}{\log\frac{1}{\eta_k}} \to 0 \quad \text{ as } \quad k\to\infty.
\end{equation}

First,
for all $\omega \in \tilde\Omega_1 (w_0,w_1,\eta_k)$, we have
$ \|\wt z^\omega\|_{L^3_t(\R,L^r_x)}\leq L(\eta_k)$ for $r=6, \infty$,
and, as in \eqref{energy_estim}, we obtain
\begin{align*}
E(\wt v^\omega(t))&\leq C\|\wt z^\omega\|_{L^3_TL^6_x}^6e^{C\|\wt z^\omega\|_{L^1_TL^{\infty}_x}}
\leq C\|\wt z^\omega\|_{L^3_TL^6_x}^6e^{CT^{\frac 23}\|\wt z^\omega\|_{L^3_TL^{\infty}_x}}
\leq CL(\eta_k)^6e^{CT^{\frac 23}L(\eta_k)}\\
&\leq e^{CT^{\frac 23}L(\eta_k)}.
\end{align*}
Then,  for $ (w_0, w_1)\in B_R$,
we have by \eqref{estim_strich}
that
\begin{align}\label{G}
\|\wt v^\omega\|_{L^3_TL^6_x}
+\|\wt v^\omega\|_{L^2_TL^8_x}
&\leq \tilde{F}\left(e^{C_1T^{\frac 23}L(\eta_k)}, R\right)T^7\left(\log \frac{T}{\eps}\right)^3\\
&=F_R\left(e^{C_1T^{\frac 23}L(\eta_k)}\right)T^7\left(\log\log\frac{1}{\eta_k}\right)^3
=:G(\eta_k),\notag
\end{align}

\noindent
where $C_1\geq 1$ is an absolute constant.
In the second to the last equality we used the definition $F_R(x):=\wt F(x,R)$
with $\wt F$ as in \eqref{estim_strich}.
This fixes $G(\eta_k)$ and shows that, with this choice,  
$\tilde \Omega_1 (w_0,w_1,\eta_k)=\emptyset$. Thus, \eqref{M_u_0_u_1}
holds for all $(w_0, w_1)\in B_R$. 
Combining \eqref{M_u_0_u_1} with \eqref{Mm}, 
we obtain that $M(\eta_k)\to 0 $ as $k\to\infty$, which proves \eqref{M(eta)}. 

Recall that $\wt F$ defined in \eqref{estim_strich} is a non-decreasing function in both its variables.  
By increasing $F_R=F(\cdot, R)$ if needed, we can choose it to be strictly increasing 
and, moreover, to satisfy
$F_R(x)\geq x$ for all $x>0$. In particular, we have
\begin{equation}\label{estim_F}
F_R\left(e^{C_1T^{\frac 23}L(\eta_k)}\right)\geq e^{C_1T^{\frac 23}L(\eta_k)}
\geq C_1 T^{\frac 23}L(\eta_k).
\end{equation}

\noindent
Then, by \eqref{estim_F}, \eqref{G}, and $T\geq 1$, we have that
\begin{equation}\label{LG}
C_1L(\eta_k)< G(\eta_k).
\end{equation}


\medskip

\noindent
{\bf Step 3} (Control of the difference of the nonlinear parts of the solutions).

For the remaining of the proof, we assume that the bounds 
on the linear and nonlinear parts of solutions given in \eqref{zz'}, \eqref{zz'28}, \eqref{vv'Strich}, and \eqref{vv'Strich28} hold.
 
Using the equations satisfied by $v$ and $v'$,
we have
\begin{align*}
\frac{d}{dt}&\left\|\left(v(t)-v'(t),\pa_tv(t)-\pa_t v'(t)\right)\right\|^2_{\dot{H}^1_x\times L^2_x}
\leq 2\left|\int_{\R^4}\pa_t \left(v(t)-v'(t)\right)\left(\pa_{t}^2-\Delta\right)\left(v(t)-v'(t)\right)dx\right|\\
&\leq 2 \left\|\left(v(t)-v'(t),\pa_tv(t)-\pa_t v'(t)\right)\right\|_{\dot{H}^1_x\times L^2_x}
\left\|(v+z)^3-(v'+z')^3\right\|_{L^2_x}.
\end{align*}

\noindent
Then, using the Sobolev embedding $\dot{H}^1(\R^4)\subset L^4(\R^4)$ and H\"older's inequality, it follows that
\begin{align*}
\frac{d}{dt}&\left\|\left(v(t)-v'(t),\pa_tv(t)-\pa_t v'(t)\right)\right\|_{\dot{H}^1_x(\R^4)\times L^2_x(\R^4)}\\
&\leq C\left\|\left(v(t)-v'(t),\pa_t v(t)-\pa_t v'(t)\right)\right\|_{\dot{H}^1_x\times L^2_x}
\left(\|v(t)\|_{L^8_x}^2+\|v'(t)\|_{L^8_x}^2+\|z(t)\|_{L^8_x}^2+\|z'(t)\|_{L^8_x}^2\right)\\
&\hphantom{XXx}+C\|z(t)-z'(t)\|_{L^6_x}
\left(\|v(t)\|_{L^6_x}^2+\|v'(t)\|_{L^6_x}^2+\|z(t)\|_{L^6_x}^2+\|z'(t)\|_{L^6_x}^2\right).
\end{align*}

\noindent
Integrating in time from 0 to $t\leq T$ and using H\"older's inequality,
we then have that
\begin{align*}
\big\|\big(v(t)&-v'(t),\pa_tv(t)-\pa_t v'(t)\big)\big\|_{\dot{H}^1_x(\R^4)\times L^2_x(\R^4)}\\
&\leq C\|z-z'\|_{L^3([0,t]; L^6_x)}
\left(\|v\|_{L^3([0,t]; L^6_x)}^2+\|v'\|_{L^3([0,t]; L^6_x)}^2+\|z\|_{L^3([0,t]; L^6_x)}^2+\|z'\|_{L^3([0,t]; L^6_x)}^2\right)\\
&\hphantom{XXx}+C\int_0^t \left\|\left(v(t')-v'(t'),\pa_t v(t')-\pa_t v'(t')\right)\right\|_{\dot{H}^1_x(\R^4)\times L^2_x(\R^4)}\\
&\hphantom{XXXXXXXXXX}\times \left(\|v(t')\|_{L^8_x}^2+\|v'(t')\|_{L^8_x}^2+\|z(t')\|_{L^8_x}^2+\|z'(t')\|_{L^8_x}^2\right)dt'.
\end{align*}

\noindent
Then, Gronwall's inequality yields for any $t\in [0,T]$ that
\begin{align*}
\big\|\big(v(t)&-v'(t),\pa_tv(t)-\pa_t v'(t)\big)\big\|_{\dot{H}^1_x(\R^4)\times L^2_x(\R^4)}\\
&\leq C\|z-z'\|_{L^3([0,t]; L^6_x)}
\Big(\|v\|_{L^3([0,t],L^6_x)}^2+\|v'\|_{L^3([0,t]; L^6_x)}^2+\|z\|_{L^3([0,t]; L^6_x)}^2+\|z'\|_{L^3([0,t]; L^6_x)}^2\Big)\\
& \hphantom{XXXXXXXXXX} \times e^{C\left(\|v\|_{L^2([0,t]; L^8_x)}^2+\|v'\|_{L^2([0,t]; L^8_x)}^2+\|z\|_{L^2([0,t]; L^8_x)}^2+\|z'\|_{L^2([0,t]; L^8_x)}^2\right)}.
\end{align*}
By \eqref{zz'}, \eqref{zz'28}, \eqref{vv'Strich}, and \eqref{vv'Strich28},
it then follows for $t\in [0,T]$ that
\begin{align*}
\left\|\left(v(t)-v'(t),\pa_tv(t)-\pa_t v'(t)\right)\right\|_{\dot{H}^1_x(\R^4)\times L^2_x(\R^4)}
\leq C\eta_k^{1-\alpha} \Big(L(\eta_k)^2+G(\eta_k)^2\Big)e^{C(G(\eta_k)^2+L(\eta_k)^2)}.
\end{align*}
Then, by \eqref{LG}, it follows easily for $t\in [0,T]$ that
\begin{align*}
\left\|\left(v(t)-v'(t),\pa_tv(t)-\pa_t v'(t)\right)\right\|_{\dot{H}^1_x(\R^4)\times L^2_x(\R^4)}
\leq C_2\eta_k^{1-\alpha} e^{C_3G(\eta_k)^2},
\end{align*}

\noindent
where $C_2$ and $C_3$ are absolute constants. 
Furthermore,
\begin{align*}
\|v(t)-v'(t)\|_{L^2_x(\R^4)}
&\leq \int_0^t \left\|\pa_t v(t')-\pa_t v'(t')\right\|_{L^2_x}dt'\leq t \|\pa_t v(t')-\pa_t v'(t')\|_{L^\infty_t([0,T]; L^2_x)}\\
&\leq C_2T\eta_k^{1-\alpha} e^{C_3G(\eta_k)^2}.
\end{align*}

\noindent
Hence, we obtained
\begin{align}\label{diff_v}
\left\|\left(v-v',\pa_tv-\pa_t v'\right)\right\|_{L^{\infty}_t([0,T]; H^1_x(\R^4)\times L^2_x(\R^4))}
\leq 2C_2T\eta_k^{1-\alpha} e^{C_3G(\eta_k)^2}.
\end{align}

In the following, we discuss the choice of $L(\eta_k)$.
The two conditions that we need to impose on $L(\eta_k)$
are $L(\eta_k)\to\infty$ as $k\to\infty$, which is crucial in Step 1, and that the right hand side of \eqref{diff_v}
tends to zero as $k\to\infty$.

Recall that $F_R:[0,\infty)\to [0,\infty)$ is a strictly increasing function
satisfying $F_R(0)=0$ and $\lim_{A\to\infty}F_R(A)=\infty$.
In particular, $F_R$ has at most countably many 
discontinuities. These are jump discontinuities that we denote by
\[0\leq x_1< x_2 <\dots <x_n<\dots .\]
We claim that, given a sequence $\{y_k\}_{k\in\N}\subset (0,\infty)$, $y_k\nearrow \infty$, 
there exists another sequence $\{y'_k\}_{k\in\N}\subset [0,\infty)$ such that $y'_k\in \text{Ran } F_R$, 
$y'_k\leq y_k$ for all $k$, and
$y'_k \to\infty$ as $k\to\infty$.

Indeed, if $y_k\in \text{Ran } F_R$, then we choose $y'_k:=y_k$. 
Otherwise, if $y_k\notin \text{Ran } F_R$,
it follows that $y_k\in [F_R(x_{n_k}-), F_R(x_{n_k}+)]$ for some $n_k\in\N$.
In this case, if $n_k\geq 2$, 
we choose $y'_k:=F_R(x_{n_{k-1}})$, 
otherwise we choose $y'_k=0$. 
Clearly, we have $y'_k\leq y_k$
and $y'_k\in \text{Ran } F_A $
for all $k$. 
We then denote by $\{y'_{k_1}\}_{k_1\in \N}$ and $\{y'_{k_2}\}_{k_2\in\N}$
the subsequence of $\{y'_k\}_{k\in\N}$
corresponding to $y_k$ in $\text{Ran } F_R$, 
respectively corresponding to $y_k$ in $\left(\text{Ran } F_R\right)^c$.
One of these two subsequences is necessarily infinite.
Clearly, we have that either $y'_{k_1}\nearrow \infty$ or $\{y'_{k_1}\}_{k_1\in\N}$ is a finite set. Also, $\{y'_{k_2}\}_{k_2\in\N}$ is either a non-decreasing sequence
converging to infinity or a finite set. 
This shows that $y'_k\to\infty$ as $k\to\infty$.

We apply the above reasoning to the sequence
\[y_k:= \sqrt{\frac{1-2\alpha}{2C_3T^{2}}}\Big(\log\frac{1}{\eta_k}\Big)^{\frac 14} \nearrow \infty \text{  as } k\to\infty.\]
As a consequence, 
there exists $\eta'_k\in (0,1)$, $\eta'_k\geq \eta_k$
such that 
\begin{align*}
\sqrt{\frac{1-2\alpha}{2C_3T^{2}}}\Big(\log\frac{1}{\eta'_k}\Big)^{\frac 14}\in \text{Ran }F_R
\quad \text{for all} \quad k\in\N
\quad \text{ and } \quad \eta'_k\to 0 \text{ as } k\to \infty.
\end{align*}
 
 \noindent
The function $F_R$ being invertible on its range, we choose $L(\eta_k)$ as
\begin{align}\label{L}
L(\eta_k)
&:=\frac{1}{C_1 T^{\frac 23}}\log\Bigg[F_R^{-1}\Bigg(\sqrt{\frac{1-2\alpha}{2C_3T^{14}}}\Big(\log \frac {1}{\eta'_k}\Big)^{\frac 14}\Bigg)\Bigg].
\end{align}
This guarantees that
$L(\eta_k)\to\infty$ as $k\to\infty$, since $\eta'_k\to 0$ and 
$\lim_{\substack{y\in\text{Ran }F_R\\ y\to\infty} }F_R^{-1}(y)=\infty$.
Moreover, by \eqref{diff_v},
the choice of $G(\eta_k)$ and $L(\eta_k)$
in \eqref{G} and \eqref{L}, and $\eta'_k>\eta_k$, we have
\begin{align}
\big\|\big(v-v',\pa_tv-\pa_t v\big)\big\|_{L^\infty_t([0,T]; H^1_x(\R^4)\times L^2_x(\R^4))}
&\leq 2C_2T\eta_k^{1-\alpha} e^{\left(\frac 12-\alpha\right)\left(\log\frac{1}{\eta'_k}\right)^{\frac 12}\left(\log\log\frac{1}{\eta_k}\right)^6}\notag\\
&\leq 2C_2T\eta_k^{1-\alpha} e^{\left(\frac 12-\alpha\right)\log\frac{1}{\eta_k}}
\leq 2C_2T\eta_k^{\frac 12},\label{vv'}
\end{align}
for $k$ sufficiently large. 

On the other hand, notice that the condition
$ \|(w_0,w_1)-(w_0',w_1')\|_{H^{s}\times H^{s-1}}<\eta_k$
immediately implies 
the control on the difference of the linear parts of solutions:
\[\left\|\left(z-z', \pa_tz-\pa_t z'\right)\right\|_{L^{\infty}_t([0,T]; H^{s}_x(\R^4)\times H^{s-1}_x(\R^4))}\leq T\eta_k.\]
Hence, we obtained that
\begin{align*}
&\left\|\left(\Phi(t)(w_0,w_1)-\Phi(t)(w_0',w_1'),
\pa_t \Phi(t)(w_0,w_1)-\pa_t \Phi(t)(w_0',w_1')\right)\right\|_{L^\infty_t\left([0,T]; H^{s}_x(\R^4)\times H^{s-1}_x(\R^4)\right)}\\
&\hphantom{XXXXXXXXX}\leq  3C_2 T \eta_k^{\frac 12}.
\end{align*}
Therefore, for a fixed $\delta>0$,
the $\mu \otimes \mu$-measure
of $\left((w_0,w_1), (w_0',w_1')\right)\in B_R\times B_R$
such that 
\[\left\|\left(\Phi(t)(w_0,w_1)-\Phi(t)(w_0',w_1'),
\pa_t \Phi(t)(w_0,w_1)-\pa_t \Phi(t)(w_0',w_1')\right)\right\|_{L^\infty_t\left([0,T]; H^{s}_x\times H^{s-1}_x\right)}>\delta\]
under the constraints $\|(w_0,w_1)-(w_0', w_1')\|_{H^{s}(\R^4)\times H^{s-1}(\R^4)}<\eta_k$, \eqref{zz'}, 
\eqref{zz'28}, \eqref{vv'Strich}, and \eqref{vv'Strich28}, is zero if $\eta_k$ is sufficiently small. 
This shows that the right hand-side of \eqref{prob_cont_eq}
converges indeed to zero as $\eta\to 0$.\end{proof}


\begin{ackno}
{\rm 
The author would like to thank Tadahiro Oh
for suggesting this problem and for his support during the preparation of this paper.
She would like to express her gratitude to Prof.\,Nicolas Burq for a suggestion that lead to an improvement of the main result.
She is also grateful to Prof.\,Monica Vi\c{s}an, Prof.\,Patrick G\'erard, and Prof.\,\'Arpad B\'enyi
for their availability in answering her questions.
Finally, she would like to thank Prof.\,Andrea Nahmod and 
Prof.\,Jeremy Quastel for helpful discussions. 
}
\end{ackno}


\end{document}